\newcommand{\citecomment}[2][]{\citen{#2}#1\citevar}
\newcommand{\citeone}[1]{\citecomment{#1}}
\newcommand{\citetwo}[2][]{\citecomment[,~#1]{#2}}
\newcommand{\citevar}{\@ifnextchar\bgroup{;~\citeone}{\@ifnextchar[{;~\citetwo}{]}}}
\newcommand{\citefirst}{\@ifnextchar\bgroup{\citeone}{\@ifnextchar[{\citetwo}{]}}}
\pgfplotsset{compat=1.18}
\definecolor{highlight}{HTML}{3465a4}
\renewcommand*\libertine@figurestyle{LF}
\newcolumntype{H}{>{\setbox0=\hbox\bgroup}c<{\egroup}@{}}
\tikzset{
	commutative diagrams/.cd, 
	arrow style=tikz, 
	diagrams={>=stealth}
}
\tikzset{
	arrow/.pic={\path[tips,every arrow/.try,->,>=#1] (0,0) -- +(0,4pt);},
	pics/arrow/.default={triangle 90}
}
\tikzset{->-/.style={decoration={
			markings,
			mark=at position .6 with {\arrow{latex}}},postaction={decorate}}
}
\tikzset{
	c/.style={every coordinate/.try}
}
\newcommand{\mf}{\mathfrak}
\DeclareMathOperator*{\Res}{Res}
\newcommand{\Gm}[1][]{%
	\ifthenelse{\isempty{#1}}%
	{\mathbb{C}^\times}
	{(\mathbb{C}^\times)^{#1}}
}
\newcommand{\intEquiv}[1]{{}^{#1}\!\!\!\int}
\newcommand{\rd}{\mathrm{d}}
\newcommand{\ri}{\mathrm{i}}
\newcommand{\re}{\mathrm{e}}
\newcommand{\de}{{\partial}}
\newcommand{\bbV}{\mathbb{V}}
\newcommand{\bbZ}{\mathbb{Z}}
\newcommand{\bbC}{\mathbb{C}}
\newcommand{\bbP}{\mathbb{P}}
\newcommand{\bbQ}{\mathbb{Q}}
\newcommand{\cO}{\mathcal{O}}
\newcommand{\cT}{\mathcal{T}}
\newcommand{\cP}{\mathcal{P}}
\newcommand{\cC}{\mathcal{C}}
\newcommand{\cK}{\mathcal{K}}
\newcommand{\cW}{\mathcal{W}}
\newcommand{\cA}{\mathcal{A}}
\newcommand{\HH}{\mathcal{H}}
\newcommand{\cI}{\mathcal{I}}
\newcommand{\cJ}{\mathcal{J}}
\newcommand{\RR}{\mathcal{R}}
\newcommand{\cX}{\mathcal{X}}
\newcommand{\cM}{\mathcal M}
\newcommand{\cQ}{\mathcal Q}
\renewcommand{\l}{\left}
\renewcommand{\r}{\right}
\def\beq{\begin{equation}}                     %
	\def\eeq{\end{equation}}                       %
\def\bea{\begin{eqnarray}}                     
	\def\eea{\end{eqnarray}}
\def\bary{\begin{array}} 
	\def\eary{\end{array}} 
\def\ben{\begin{enumerate}} 
	\def\een{\end{enumerate}}
\def\bit{\begin{itemize}} 
	\def\eit{\end{itemize}}
\def\nn{\nonumber}
\theoremstyle{plain}
\newtheorem{thm}{Theorem}[section]
\newtheorem*{thm*}{Theorem}
\newtheorem{lem}[thm]{Lemma}
\newtheorem{prop}[thm]{Proposition}
\newtheorem*{prop*}{Proposition}
\newtheorem*{conj*}{Conjecture}
\newtheorem{cor}[thm]{Corollary}
\newtheorem*{cor*}{Corollary}
\theoremstyle{definition}
\newtheorem{defn}[thm]{Definition}
\newtheorem{rmk}[thm]{Remark}
\newtheorem{example}[thm]{Example}
\theoremstyle{plain}
\theoremstyle{plain}
\theoremstyle{plain}
\theoremstyle{definition}
\theoremstyle{plain}
\crefname{equation}{Eq.}{Eqs.}
\crefname{eqnarray}{Eq.}{Eqs.}
\crefname{algo}{algorithm}{algorithms}
\crefname{conj}{conjecture}{conjectures}
\crefname{lem}{lemma}{lemmas}
\crefname{thm}{theorem}{theorems}
\crefname{claim}{claim}{claims}
\crefname{rmk}{remark}{remarks}
\crefname{prop}{proposition}{propositions}
\crefname{section}{section}{sections}
\crefname{appendix}{appendix}{appendices}
\crefname{cor}{corollary}{corollaries}
\crefname{figure}{figure}{figures}
\crefname{table}{table}{tables}
\crefname{example}{example}{examples}
\crefname{prob}{problem}{problems}
\crefname{assm}{assumption}{assumptions}
\crefname{defn}{definition}{definitions}
\crefname{notation}{notation}{notations}
\crefname{speculation}{speculation}{speculations}
\crefname{construction}{construction}{constructions}
\crefname{observation}{observation}{observations}
\crefname{innercustomthm}{Theorem}{Theorems}
\crefname{innercustomconj}{Conjecture}{Conjectures}
\crefname{innercustomassumption}{assumption}{Assumption}
\crefname{innerpracticalresult}{practical result}{practical results}
\newcommand{\bra}{\left\langle}
\newcommand{\ket}{\right\rangle}
\crefname{equation}{Eq.}{Eqs.}
\crefname{eqnarray}{Eq.}{Eqs.}
\crefname{algo}{Algorithm}{Algorithms}
\crefname{conj}{Conjecture}{Conjectures}
\crefname{lem}{Lemma}{Lemmas}
\crefname{thm}{Theorem}{Theorems}
\crefname{customthm}{Theorem}{Theorems}
\crefname{claim}{Claim}{Claims}
\crefname{rmk}{Remark}{Remarks}
\crefname{prop}{Proposition}{Propositions}
\crefname{section}{Section}{Sections}
\crefname{appendix}{Appendix}{Appendices}
\crefname{cor}{Corollary}{Corollaries}
\crefname{figure}{Figure}{Figures}
\crefname{table}{Table}{Tables}
\crefname{example}{Example}{Examples}
\crefname{prob}{Problem}{Problems}
\crefname{assm}{Assumption}{Assumptions}
\crefname{defn}{Definition}{Definitions}
\crefname{customconj}{Conjecture}{Conjectures}
\numberwithin{equation}{section}
\title{Dubrovin duality and mirror symmetry for ADE resolutions}
\author{Andrea Brini$^{1,2}$, Jingxiang Ma$^1$, Ian A. B. Strachan$^3$}
\address{$^1$ University of Sheffield, School of Mathematical and Physical Sciences, Hounsfield Road, Sheffield S3 7RH, United Kingdom}
\address{$^2$ On leave from CNRS, DR~13, Montpellier, France}
\address{$^3$ University of Glasgow, School of Mathematics and Statistics, 132 University Pl, Glasgow G12 8TA, United Kingdom}
\email{a.brini@sheffield.ac.uk,jma75@sheffield.ac.uk,ian.strachan@glasgow.ac.uk}
\begin{document}

\setcounter{tocdepth}{2}

\begin{abstract}
We show that, under Dubrovin's notion of ``almost'' duality, the Frobenius manifold structure on the orbit spaces
of the extended affine Weyl groups of type $\mathrm{ADE}$ is dual, for suitable choices of weight markings, 
to the equivariant quantum cohomology of the minimal resolution of the du~Val singularity of the same Dynkin type. 
We also provide a uniform Lie-theoretic construction of Landau--Ginzburg mirrors for the quantum cohomology of $\mathrm{ADE}$ resolutions. The mirror B-model is described by a one-dimensional LG superpotential associated to the spectral curve of the $\widehat{\mathrm{ADE}}$ affine relativistic Toda chain.
\end{abstract}
\maketitle

\section{Introduction} 

Let $l \in \bbZ_{>0}$ and $\mathcal{R}=\mathrm{ADE}_l$ be a rank$-l$ simply-laced irreducible root system, and  
fix a choice of fundamental weight $\widehat{\omega}$ for the complex simple Lie algebra associated to $\RR$ as follows: 
\bit
\item when $\mathcal{R}=\mathrm{A}_l$, $\widehat{\omega}$ can be any fundamental weight;
\item when $\mathcal{R}=\mathrm{D}_l$ or $\mathcal{R}=\mathrm{E}_l$, $\widehat{\omega}$ will be the highest weight of the fundamental representation of highest dimension.
\eit
We will call the datum $(\RR, \widehat{\omega})$ a \emph{marked ADE pair}. The corresponding Dynkin diagrams, with node marking specified by $\widehat{\omega}$, are shown in \cref{fig:dynkin}. In this paper we will be concerned with three classes of Frobenius manifolds 
associated to a given pair $(\RR, \widehat{\omega})$, arising respectively from representation theory, enumerative algebraic geometry, and integrable systems.

\begin{itemize}
    \item For $\RR$ the root system of any complex simple Lie algebra, and $\widehat{\omega} \in \Omega$ any fundamental weight,  
    Dubrovin and Zhang \cite{MR1606165} famously
    constructed
a canonical semi-simple Frobenius manifold structure $\mathrm{EAW}(\RR, \widehat{\omega})$
on the orbits of the reflection representation of the $\widehat{\omega}$-extended affine Weyl group of $\RR$,
generalising the classical construction of polynomial Frobenius manifolds on orbit spaces of Coxeter groups. The specialisation to $(\RR,\widehat{\omega})$ being a marked ADE pair will be the setup of sole concern to us in this paper, and we will use the shorthand notation
\[
\cM_{\rm AW} \coloneqq
\mathrm{EAW}(\RR, \widehat{\omega})\,.\]
    
\item Let $\mathsf{G}<\mathrm{SL}(2,\bbC)$, $|\mathsf{G}|<\infty$ be the McKay group of type $\RR$, and let $Z=\widetilde{\bbC^2/\mathsf{G}}$ be the minimal resolution of the associated canonical surface singularity.  The pair $(\RR, \widehat{\omega})$ specifies a $\bbC^\times$-action on $Z$, point-wise fixing the irreducible component of the exceptional locus of $Z$ corresponding to the marked fundamental weight $\widehat{\omega}$ under the McKay correspondence \cite{MR1886756}.
The associated Frobenius manifold is the $\bbC^\times$-equivariant quantum cohomology of $Z$,
    \[ \cM_{\rm GW} \coloneqq \mathrm{QH}_{\bbC^{\times}}(Z)\,.\]
    \item 
    In \cite{MR1727150,Dubrovin:1994hc}, Dubrovin constructs a Frobenius manifold structure $\mathrm{LG}(\lambda,\phi)$ on the Hurwitz moduli space of ramified covers of the projective line with given genus and ramification profile at infinity. Here, $\lambda$ (the \emph{Landau--Ginzburg superpotential}) denotes the universal map, and $\phi$ is the additional datum of a Saito  form \cite{MR723468} on the fibres of the family.  
    One can associate to the pair $(\RR, \widehat{\omega})$  an algebraically completely integrable system -- the type $(\RR,\widehat{\omega})$ affine relativistic Toda chain  \cite{Fock:2014ifa} at vanishing Casimir -- whose 
    isospectral dynamics 
    is encoded in a special Frobenius submanifold of a certain Hurwitz space \cite{Brini:2021pix}. In this context $\lambda$ is the spectral parameter of the relativistic Toda Lax matrix, and $\phi$ is the differential of the (logarithm of the) argument of its characteristic polynomial. We will denote this Frobenius manifold as
    \[
    \cM_{\rm LG} \coloneqq \mathrm{LG}(\lambda,\phi)\,.
    \]
    
\end{itemize}
\begin{figure}[t]
    \centering
\def\arraystretch{1.5}    
    \begin{tabular}{|c|c|c|}
    \hline
    $\RR$ & $\widehat{\omega}$ & Marked ADE Dynkin diagram \\
    \hline \hline 
      $\mathrm{A}_l$   & $\omega_{\bar k}$ &   \begin{tikzpicture}[scale=3] 
  \dynkin[label, root radius=0.06cm, labels={\omega_1,\omega_2,\omega_3,\omega_{\bar{k}}, \omega_{l-2},\omega_{l-1},\omega_l}] A{ooo.*.ooo}  
\end{tikzpicture} \\ \hline   

$\mathrm{D}_l$ &  $\omega_{l-2}$ &   \begin{tikzpicture}[scale=3]
  \dynkin[label,affine mark=*, root radius=0.06cm, labels={\omega_1~,\omega_2,\omega_3,\omega_j, \omega_{l-2},~\omega_{l-1},~\omega_l}] D{ooo.o.*oo}
\end{tikzpicture} \\ \hline 

$\mathrm{E}_6$ &   $\omega_3$   &   \begin{tikzpicture}[scale=3]
  \dynkin[label,affine mark=*, root radius=0.06cm, labels={~\omega_1,~\omega_6,\omega_2,\omega_3, \omega_{4},\omega_{5}}] E{ooo*oo}
\end{tikzpicture} \\ \hline 

$\mathrm{E}_7$
&   $\omega_3$ &      \begin{tikzpicture}[scale=3]
  \dynkin[label,affine mark=*, root radius=0.06cm, labels={\omega_1,~\omega_7,\omega_2,\omega_3, \omega_{4},\omega_{5},\omega_6}] E{ooo*ooo}
\end{tikzpicture} \\ \hline 

$\mathrm{E}_8$

&  $\omega_3$ &       \begin{tikzpicture}[scale=3]
  \dynkin[label,affine mark=*, root radius=0.06cm, labels={\omega_1,~\omega_8,\omega_2,\omega_3, \omega_{4},\omega_{5},\omega_6,\omega_7}] E{ooo*oooo}
\end{tikzpicture} \\ \hline 

    \end{tabular}

\caption{Marked Dynkin diagrams of pairs $(\RR, \widehat{\omega})$. The marked node corresponding to the weight $\widehat{\omega}$ is indicated in black.}
\label{fig:dynkin}
\end{figure}
\subsection{Dubrovin duality and mirror symmetry}
Given a Frobenius manifold $\cM$ with a linear Euler vector field $E$ and semi-simple product \[\circ : \Gamma(\cM, \cT\cM) \otimes_{\cO_\cM} \Gamma(\cM, \cT\cM)  \longrightarrow \Gamma(\cM, \cT\cM)\,,\]
one can construct \cite{MR2070050} a second family $\cM^\flat$ of Frobenius rings on the locus where $E$ is invertible in the $\circ$-algebra. This is obtained by pre-composing the flat pairing on $\cM$ with multiplication of each of its entries by $E^{-1/2}$:
\beq
\xymatrix{
    \cM \ar[r]^{E^{-1/2} \circ} & \cM^\flat
    }
\label{eq:adintro}
\eeq
In a canonical coordinate chart for $\cM$, \eqref{eq:adintro} corresponds to rescaling the coefficients of the (diagonal) Gram matrix of the Frobenius pairing by the inverse diagonal matrix of the canonical coordinates. It is further shown in \cite{MR2070050} that the resulting rescaled pairing on $\cM^\flat$ is flat, and induced by a \emph{dual} prepotential function to the original prepotential of $\cM$.   We will call $\cM^\flat$  the \emph{Dubrovin-dual} \footnote{The operation in \eqref{eq:adintro} is often referred to as an ``almost-duality'' of Frobenius manifolds, owing to the fact that $\cM^\flat$ is a weak (or {\it almost}) Frobenius manifold: we review this in \cref{sec:frobgen}. The terminology ``duality'' is perhaps somewhat improper, as for once the operation in \eqref{eq:adintro} is not involutive, but we abide by historical convention and refer to it as such (see also \cite{MR2183247,DavidStrachan}).}
Frobenius manifold of $\cM$. 

Dubrovin's duality acquires a particularly salient form when $\cM$ admits a realisation as a Landau--Ginzburg model on a family of algebraic curves.
In this case, \eqref{eq:adintro} amounts to replacing the superpotential by its logarithm 
\cite{Dubrovin:1994hc,MR2070050,MR2287835}:
\[\cM^\flat \simeq \mathrm{LG}(\log \lambda,\phi)\,.\]
%
From the point of view of the bihamiltonian quasi-linear integrable hierarchy defined on the loop space of a Frobenius manifold \cite{Dubrovin:1994hc}, a mirror-symmetry presentation of $\cM$ as a Landau--Ginzburg model is equivalent to a dispersionless Lax--Sato formulation of the integrable flows, with $\lambda$ coinciding with the Lax symbol \cite{MR1346289}. Dubrovin's duality \eqref{eq:adintro}, on the other hand, 
correponds to expressing the integrable flows on $\cM$ in Darboux coordinates for the second Poisson bracket of the Principal Hierarchy (the limiting value of the Dubrovin--Novikov pencil at infinity), and accordingly to a different (``dual'') notion of topological $\tau$-function in the sense of \cite{dubrovin:2001,MR3307147}.

\subsection{Main results}
In this paper we show that, for all marked ADE pairs $(\RR, \widehat{\omega})$, the Frobenius manifolds $\cM_{\rm AW}$, $\cM_{\rm GW}$ and $\cM_{\rm LG}$ are either non-trivially isomorphic, or Dubrovin-dual to each other. 

%
\begin{thm*}[=\cref{thm:msQH,thm:ddQH}]
For all marked ADE pairs $(\RR, \widehat{\omega})$, we have
\[\xymatrix{
    \cM_{\rm AW} \ar[dd]_{E^{-1/2} \circ}  \ar[rr]^{\simeq} &   & 
    \cM_{\rm LG} \ar[dd]^{E^{-1/2} \circ} \\ & \Box & \\
   \cM^\flat_{\rm AW} \ar[dr]^{\simeq} &   & 
   \cM^\flat_{\rm LG}\ar[dl]_{\simeq}\\
   & \cM_{\rm GW} & \\
}\]
\end{thm*}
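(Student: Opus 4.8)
The plan is to establish the commuting diagram by proving its two constituent isomorphisms ($\cM_{\rm AW} \simeq \cM_{\rm LG}$ and $\cM^\flat_{\rm LG} \simeq \cM_{\rm GW}$), one Dubrovin-duality, and then checking compatibility. I would begin with the top horizontal arrow $\cM_{\rm AW} \simeq \cM_{\rm LG}$, which is a statement purely on the A-model/representation-theory side: both are polynomial (or almost-polynomial) Frobenius manifolds attached to $(\RR,\widehat\omega)$. The natural strategy is to exhibit a common set of flat coordinates. On the $\cM_{\rm AW}$ side these are the fundamental characters (or their appropriate $\widehat\omega$-extended-Weyl-invariant analogues) pulled back from the orbit space; on the $\cM_{\rm LG}$ side they arise as residues/periods of the Landau--Ginzburg differential attached to the $\widehat{\mathrm{ADE}}$ affine relativistic Toda spectral curve. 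I would match the two prepotentials by comparing their Euler gradings (the weight markings are chosen precisely so the scaling dimensions line up), identifying the unit and Euler fields, and verifying that the intersection forms agree. This reduces to checking that the Toda spectral curve is the Seiberg--Witten-type curve whose Hurwitz data reproduces the Dubrovin--Zhang construction, which should follow from the results of \cites{Brini:2021pix} quoted in the excerpt.

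The second isomorphism $\cM^\flat_{\rm LG} \simeq \cM_{\rm GW}$ is where the genuinely enumerative content lies. Here I would use the Landau--Ginzburg mirror description of the \emph{dual} manifold: by the general principle recalled in the introduction, passing to $\cM^\flat_{\rm LG}$ replaces the superpotential $\lambda$ by $\log\lambda$, i.e.\ $\cM^\flat_{\rm LG} \simeq \mathrm{LG}(\log\lambda,\phi)$. The claim is then that this logarithmic LG model is a mirror to the $\bbC^\times$-equivariant quantum cohomology of the resolution $Z$. I would prove this by matching flat coordinates with the equivariant cohomology classes of $Z$ (the identity, the divisor classes dual to the exceptional curves, and the equivariant parameter), and by matching the genus-zero Gromov--Witten prepotential against the oscillatory/residue integrals of the logarithmic superpotential. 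The appearance of $\log\lambda$ is the geometric reflection of the fact that quantum cohomology is quasi-polynomial in the Novikov/K\"ahler variables through the $q = \re^{2\pi\ri t}$ substitution, so the logarithm is exactly what converts the multiplicative Toda/character variables into the additive flat coordinates of $\mathrm{QH}_{\bbC^\times}(Z)$.

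With the two isomorphisms in hand, the vertical Dubrovin-duality arrows are by definition the operation $E^{-1/2}\circ$ of \eqref{eq:adintro}, so the left and right edges of the square require no separate proof; what remains is to verify that the square \emph{commutes}, i.e.\ that the top isomorphism $\cM_{\rm AW}\simeq\cM_{\rm LG}$ intertwines the two copies of $E^{-1/2}\circ$. This is a formal consequence of the duality being canonically determined by the Frobenius data: an isomorphism of Frobenius manifolds necessarily carries the Euler field, the product $\circ$, and the flat pairing of one to those of the other, hence carries the rescaled dual pairing to the rescaled dual pairing. Thus naturality of $E^{-1/2}\circ$ under isomorphism gives $\cM^\flat_{\rm AW}\simeq\cM^\flat_{\rm LG}$ for free, and the lower triangle then commutes because both dual manifolds are identified with $\cM_{\rm GW}$ through the second isomorphism above. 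I would make this precise by recording, in \cref{sec:frobgen}, that $E^{-1/2}\circ$ is a functor on the category of Frobenius manifolds with linear Euler field and semisimple product.

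The main obstacle I expect is the uniform, Lie-theoretic matching underlying $\cM^\flat_{\rm LG}\simeq\cM_{\rm GW}$: on the A-model side one must control the equivariant Gromov--Witten invariants of $Z$ in all types ADE simultaneously, while on the B-model side one must have explicit enough control of the affine relativistic Toda spectral curve and its Saito differential $\phi=\rd\log(\arg)$ to compute periods type-independently. Concretely, the difficulty is showing that the flat coordinates built from the logarithmic superpotential coincide, after the mirror map, with the natural equivariant divisor basis, and that no instanton corrections are missed. I would manage this by reducing the all-genus-zero GW statement to a WDVV-compatible set of three-point functions (the quantum product with divisors, governed by the equivariant small quantum cohomology of the $A_{n}$/$D_n$/$E_n$ resolution, computable via localization or the crepant resolution correspondence), and then checking these against the residue pairing of $\log\lambda$ type by type where necessary, using the explicit spectral curves for the exceptional cases.
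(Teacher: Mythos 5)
Your overall architecture agrees with the paper's: the top arrow is quoted from \cite{Brini:2021pix}, the vertical arrows and the commutativity of the square are formal consequences of the duality operation being canonically determined by the conformal Frobenius data (so that \eqref{eq:msQHintro} and \eqref{eq:ddQHintro} are logically equivalent), and the enumerative content is concentrated in one of the two lower isomorphisms. For the classical series your plan --- sum the residues of the logarithmic superpotential and compare with the known equivariant Gromov--Witten prepotentials --- is essentially what \cref{sec:LGsum} does, and it works there because $\deg_\lambda \cP = 1$ and the $\mu$-projection is unramified, so the contour can be turned around onto the divisor of $\lambda$ and the residues evaluated in closed form.

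The gap is in your treatment of the exceptional series. You propose to verify $\cM^\flat_{\rm LG}\simeq\cM_{\rm GW}$ ``type by type \dots\ using the explicit spectral curves for the exceptional cases'', i.e.\ by the same residue computation. This does not go through for $\RR=\mathrm{E}_l$: there $\deg_\lambda\cP>1$, the $\mu$-projection is ramified, and the contour-turning argument picks up contributions from the critical points of $\mu$ (the type (iv) residues of \cref{sec:LGsum}) which are as hard to evaluate as the original sum over critical points of $\lambda$. The brute-force alternative of comparing the two sides of \eqref{eq:AD2} symbolically is also infeasible: the relevant Jacobians are trigonometric polynomials with up to billions of terms for $\mathrm{E}_8$, and their inversion is out of reach. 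The missing idea is the reduction to initial conditions of \cref{sec:dd}: one proves (\cref{prop:lqhom}, \cref{cor:boundinY}) that the $(2,1)$-tensors $\ell_{\rm GW}$ and $\ell_{\rm AW}$ are quasi-homogeneous $\widetilde{\cW}$-invariant Fourier polynomials lying in the same finite-dimensional space $\mathfrak{V}_{\rm adm}$ --- regularity along the discriminant being a separate, nontrivial point --- after which the functional identity \eqref{eq:AD2} follows from a numerical check at $|S_{\rm adm}|$ generic points. Note also that for $\mathrm{E}_l$ the paper proves the duality statement $\cM^\flat_{\rm AW}\simeq\cM_{\rm GW}$ directly (the AW side being polynomial in flat coordinates and hence cheaply evaluable at a point) and only then deduces the mirror statement via the top arrow; your proposal inverts this order, which is precisely the direction that is computationally inaccessible.
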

The isomorphism in the top row,
\[
\cM_{\rm AW} \simeq \cM_{\rm LG}\,,
\]
was proved in \cite{Brini:2021pix}. 
Therefore, the statement that the equivariant quantum cohomology of ADE resolutions is Dubrovin-dual to the corresponding extended affine Weyl Frobenius manifold,
\beq
\cM_{\rm GW} \simeq \cM_{\rm AW}^\flat\,, 
\label{eq:ddQHintro}
\eeq
is logically equivalent to proving its mirror realisation as an LG model on a family of relativistic Toda spectral curves, upon replacing the spectral parameter as $\lambda \longrightarrow \log \lambda$,
\beq
\cM_{\rm GW} \simeq \cM_{\rm LG}^\flat\,.
\label{eq:msQHintro}
\eeq
Our strategy will be to prove the mirror theorem \eqref{eq:msQHintro} first, as a means to establish the Dubrovin duality 
\eqref{eq:ddQHintro} as a consequence, for the classical series $\RR=\mathrm{A}_l$ and $\RR=\mathrm{D}_l$; and viceversa for the exceptional series $\RR=\mathrm{E}_l$.

\cref{thm:msQH} answers constructively, in all Dynkin types, a long-standing question about mirror symmetry in the fundamental setup of quantum cohomology of du Val resolutions. When $\RR=\mathrm{A}_l$, $Z$ is a smooth 2-dimensional toric Calabi--Yau  surface: in this case a Landau--Ginzburg mirror has been known since Givental's work on equivariant toric mirror symmetry \cite{MR1653024,MR2510741}.
The case where $\RR \neq \mathrm{A}_l$ and $Z$ is not toric has been outstanding to-date, as the methods of \cite{MR1653024} cannot be directly applied to this more general setup. An important consequence of 
\eqref{eq:msQHintro} is that it automatically provides a global integral representation of the components of the $J$-function as univariate Laplace-type integrals: for type $\RR = \mathrm{A}_l$, this enhanced control on their analytic continuation was brought to fruit in \cite{Brini:2013zsa} to prove  Iritani's integral K-theoretic and higher genus full-descendent Crepant Resolution Conjectures, using R-matrix quantisation techniques. \cref{thm:msQH} opens the way for a similar analysis for all ADE types, which will be explored in future work. The isomorphism \eqref{eq:ddQHintro} further suggests a conjectural integrable hierarchy governing the higher genus Gromov--Witten theory of $Z$: this should coincide with the one constructed in \cite{Milanov:2014pma} expressed in a suitable set of dual variables, given by Darboux co-ordinates for a second Hamiltonian structure. For the classical series $\RR=\mathrm{A}_l$ and $\RR=\mathrm{D}_l$, the hierarchy is a particular rational reduction of the 2-Toda hierarchy
\cite{Brini:2014mha}.


\subsection{Organisation of the paper}

The paper will be organised as follows. In \cref{sec:frobgen}, we will start with a short, but self-contained review of basic notions from the theory of Frobenius manifolds and Dubrovin's duality. We will follow this up in \cref{sec:frobRomega} with a detailed construction of the three Frobenius manifolds $\cM_{\rm AW}$, $\cM_{\rm LG}$ and $\cM_{\rm GW}$ specified by the datum of a marked ADE pair $(\RR, \widehat{\omega})$. In \cref{sec:LGsum}, we will explain how the embedding of the LG model $(\lambda, \phi)$ into a genus zero Hurwitz-Frobenius manifold for the classical series $\RR=\mathrm{A}_l$ and $\RR=\mathrm{D}_l$ allows to systematically determine the structure constants of $\cM_{\rm LG}^\flat$. Armed with this, the mirror theorem in \eqref{eq:msQHintro}, and therefore the duality \eqref{eq:ddQHintro} with the extended affine Weyl orbit spaces, can be deduced upon comparison with the genus zero Gromov--Witten calculations of \cite{MR2411404,Brini:2013zsa}. In \cref{sec:dd}, we give a general representation-theoretic argument,
applicable to all extended affine Weyl Frobenius manifolds, showing that the structure constants of the Frobenius product on both sides of \eqref{eq:ddQHintro} belong to a certain finite-dimensional vector space of quasi-homogeneous polynomials. As such, the corresponding $(2,1)$-tensors are  determined by their values on a (small) finite set of points $\mathfrak{I}$ in the semi-simple locus of $\cM_{\rm AW}$, which we call a set of \emph{initial conditions} for $\cM_{\rm AW}$. We perform this analysis specifically for $\RR=\mathrm{E}_l$, and show how the reduction to initial conditions drastically simplifies the verification of \eqref{eq:ddQHintro}, and therefore the proof of the mirror theorem in \eqref{eq:msQHintro}, which we carry out for the entire exceptional series. 

 We shall never assume Einstein's convention in this paper.  For the reader's convenience, we collect the notation employed throughout the text in \cref{tab:notation}.

\begin{table}[t]
\begin{center}
\begin{tabular}{ |c|l| } 
 \hline
 $\mathcal{R}$, resp. $\mathcal{R}^+$ & {An ADE root system, resp. its set of positive roots}\\
  \hline
 $\Pi = \{\alpha_1,\dots,\alpha_l\}$ & {The set of simple roots of $\mathcal{R}$}\\
 \hline
 $\Omega=\{\omega_1,\dots,\omega_l\}$ & {The set of fundamental weights of $\mathcal{R}$}\\
  \hline
   $\rho_i$, resp. $\Gamma_i$ & {Irreducible representation, resp. weight system, with highest weight $\omega_i$}\\
  \hline
 $\left(\mathfrak{h},\langle,\rangle\right)$ & {The Cartan subalgebra of $\mathfrak{g}$, together with its Cartan--Killing form}  \\ 
\hline
$\mathfrak{g}$ & {The complex simple Lie algebra associated to $\RR$} \\ 
  \hline
 $C_{ij}=\langle\alpha_i,\alpha_j\rangle$ & {The Cartan matrix of $\mathcal{R}$}\\
 \hline
 $\widehat{\omega}$ & A marked fundamental weight of $\RR$ as in \cref{fig:dynkin}.\\
 \hline
 $\mathcal{W}/\widehat{\mathcal{W}}/\widetilde{\mathcal{W}}$ & The Weyl/affine Weyl/extended affine Weyl group of type $(\RR, \widehat{\omega})$\\
 \hline
 $X$, resp. $Z$ & The du Val singularity of type $\mathcal{R}$, resp. its minimal resolution\\
 \hline
 $\mathsf{T}$ & The torus action on $Z$ specified by $\widehat{\omega}$
 \\ \hline
 $\cM_{\rm AW}$ & The extended affine Weyl Frobenius manifold of type $(\RR,\widehat{\omega})$\\
 \hline
 $\cM_{\rm GW}$ & The equivariant quantum cohomology of $Z$ of type
 $(\RR, \widehat{\omega})$
 \\
  \hline
 $\cM_{\rm LG}$ & The Landau--Ginzburg Frobenius manifold of the affine relativistic  \\
  & Toda chain of type $(\RR, \widehat{\omega})$
\\
  \hline
 $(\eta_{\bullet}, c_\bullet, \eta^\flat_\bullet, c_\bullet^\flat)$
 & The metric, product tensor, and their duals on $\cM_\bullet$ ($\bullet\in \rm \{AW, GW, LG\}$) 
 \\
 \hline
 $(Y_\alpha)_{\alpha=1}^l$ & Basic Weyl-invariant Fourier polynomials of $\mathfrak{h}$\\
 \hline
 $(W_\alpha)_{\alpha=1}^l$ & The fundamental characters of $\exp(\mathfrak{g})$ \\
  \hline
 $(y_\alpha)_{\alpha=1}^{l+1}$ & Basic extended affine Weyl-invariant Fourier polynomials of $\mathfrak{h} \oplus \bbC$\\
 \hline
 $(x_i)_{i=1}^{l+1}$, resp. $(p_i)_{i=1}^{l+1}$ & Linear coordinates for $\mathfrak{h} \oplus \bbC$ in the basis $\Pi^\vee$, resp. $\Omega^\vee$
 \\
   \hline
 $(t_A)_{A=1}^{l+1}$ & A flat coordinate chart for $\cM_{\rm AW}$ 
 \\
 \hline
     $\mathfrak{I}$ & A set of initial conditions for $\cM_{\rm AW}$ \\
 \hline
\end{tabular}
\end{center}
\medskip
\centering
\caption{Notation used throughout the text. When working in local coordinates for $\cM_{\rm AW}$, we will consistently use lower-case Latin indices for a flat chart for the intersection form; upper-case Latin indices for a flat chart for the Saito metric; and Greek indices for a coordinate chart given by basic invariants.}
\label{tab:notation}
\end{table}



\subsection*{Acknowledgements} We thank J.~Bryan, H.~Iritani and H-H.~Tseng for correspondence. A.~B. was supported by the EPSRC  Early Career Fellowship EP/S003657/2. J.~M. was supported by a PhD studentship of the  EPSRC Doctoral Training Partnership EP/W524360/1.

\section{Setup}
\subsection{Generalities on Frobenius manifolds}
\label{sec:frobgen}

Let $M$ be an $n$-dimensional complex manifold. We will write $\cT_M$ (resp. $\Omega_M$) for the sheaf of holomorphic sections of the holomorphic tangent (resp. cotangent) bundle $T^{1,0}M$ (resp. $T^*_{1,0}M$), and $\mf{X}(M) \coloneqq \mathrm{H}^0(M, \cT_M)$ for the space of global holomorphic vector fields on $M$. A holomorphic Frobenius manifold structure\footnote{We call \emph{Frobenius manifold} here what is often elsewhere referred to as a \emph{weak} (or \emph{almost}) \emph{Frobenius manifold}, since we do not require axiom {\bf FM6} (covariant linearity of $E$) to hold in our definition. The classical notion of Frobenius manifold in \cite{Dubrovin:1994hc} is what we call \emph{conformal Frobenius manifold} in this paper. We will reserve the terminology ``almost Frobenius manifold'' to indicate a Frobenius manifold where 
{\bf FM4} (covariant constancy of $e$) 
is dropped.} on $M$ is a 4-tuple $\cM\coloneqq =(M,  c, \eta , e)$ satisfying the following axioms:
\begin{description}
\item[FM1] the {\it metric} $\eta \in \mathrm{H}^0(M,\mathrm{Sym}^2 \Omega_M)$ is a flat perfect symmetric pairing on $\cT_M\,$;
\item[FM2] the {\it product} $c \in \mathrm{H}^0(M,\mathrm{Sym}^3 \Omega_M)$ is a totally symmetric $(0,3)$-tensor 
\[
\eta(X, Y \circ Z) = \eta(X \circ Y, Z) \coloneqq c(X, Y, Z) \in \cO_M(M) \,, \quad X, Y, Z \in \mathfrak{X}(M)\,,
\]
inducing, holomorphically in $p\in M$, a structure of commutative, unital, associative Frobenius algebra on the tangent fibre $T^{1,0}_pM$\,;
\item[FM3] $\nabla_{W}c(X, Y, Z)$ is totally symmetric in $W, X, Y, Z \in \mf{X}(M)$, where $\nabla$ denotes the Levi-Civita connection of $\eta$;
\item[FM4] the {\it identity vector field} $e \in \mf{X}(M)$, defined by its fibrewise restriction to the identity of the algebra, is horizontal with respect to $\nabla$, $\nabla e = 0\,.$
\end{description}
Two supplementary conditions are often imposed on $\cM$, the second of which may or may not be realised in the context of this paper. 
\begin{description}
\item[FM5] $\cM$ is {\it semi-simple} if the set \[\mathrm{Discr}(\cM) \coloneqq \{p\in M~|~\exists\, v \in T_p M~{\rm with}~ v \circ v=0\}\] has positive complex co-dimension;
\item[FM6] $\cM$ is {\it conformal} if there exists 
a holomorphic vector field $E \in \mathfrak{X}(M)$ which is covariantly linear, $\nabla\nabla E=0$, and is such that
\[
\mathcal{L}_{ E} \circ  =  \circ \,, \quad
\mathcal{L}_{ E} \eta  = (d-2) \eta\,,
\]
for some constant $d\in \bbQ$, known as the {\it charge} of $\cM$. 
\end{description}
All examples in this paper will be semi-simple, but not necessarily conformal. We will write \[M^{\rm ss} \coloneqq M\setminus \mathrm{Discr}(\cM)\] to indicate the open semi-simple locus of $\cM$, and the calligraphic notation $\cM^{\rm ss}$ for the Frobenius manifold structure induced by restriction of $\cM$ to $M^{\rm ss}$.

Since the metric $\eta$ is flat, the $\mathrm{GL}(n,\bbC)$-equivalence class of flat frames for $\eta$ equips a Frobenius manifold with a canonical $\mathrm{GL}(n,\bbC)\ltimes \bbC^n$ affine-linear equivalence class of flat charts, in which the Gram matrix of the pairing $\eta$ is constant. The axioms {\bf FM1-FM6} in one such chart $(t_1, \dots , t_{n})$ amount to the local existence of a holomorphic function $F$ (the \emph{prepotential}) with the following properties:

\ben[(i)]
\item the unit vector field is \[ e = \frac{\de}{\de{t_1}}\,;\]
    \item the Gram matrix $\eta_{AB} \coloneqq \eta\l(\frac{\de}{\partial{t_A}}, \frac{\de}{\partial t_B}\r)$ is constant, non-degenerate, and equal to the Hessian matrix of $\de_{t_1} F$, \[\eta_{AB}= \frac{\partial^3 F}{\de {t_1}  \de t_A \de t_B} \,;\]
    \item the prepotential is weighted quasi-homogeneous in its arguments, \[\sum_{A=1}^n \l(p_A t_A \frac{\de}{\partial t_A}  + r_{A} \frac{\de}{\partial t_A}\r) F = (3-d) F\,,\] 
    for some $p_A$, $q_A$, $d \in \bbQ$; 
    \item for all $A,B,M,N \in \{1,\dots,n\}$, writing \[\eta^{CD} \coloneqq (\eta^{-1})_{CD}\,,\] the prepotential satisfies the Witten--Dijkgraaf--Verlinde--Verlinde (WDVV) equations,
\begin{equation}
\sum_{C,D=1}^n    \l(\dfrac{\partial^3 F}{\partial t_A \partial t_B \partial t_C} \eta^{CD}\dfrac{\partial^3 F}{\partial t_D \partial t_M \partial t_N}-\dfrac{\partial^3 F}{\partial t_A \partial t_M \partial t_C} \eta^{CD}\dfrac{\partial^3 F}{\partial t_D \partial t_B \partial t_N}\r)  = 0\,. 
\label{eq:wdvv}
\end{equation}
\een

\subsubsection{Dubrovin duality}

If $\cM$ is semi-simple, and for all $p\in M$ such that $E|_p$ is in the group of units of the $\bbC$-algebra $(T^{1,0}_p M, \circ)$,  one may define a second metric $\eta^\flat$ (the {\it intersection form}) by 
\begin{equation}\label{eq:dualmetric}
    \eta^\flat\l(X, Y\r) \coloneqq \eta\l(E^{-1/2} \circ X, E^{-1/2} \circ Y\r) = \eta\l(E^{-1} \circ X, Y\r). 
\end{equation}
A consequence of {\bf FM5-FM6} \cite[Lect.~3]{Dubrovin:1994hc} is that 
the metric
$\eta^\flat$ is also flat;
we will usually denote by $(x_1, \dots, x_n)$ a choice of a flat coordinate chart for it. Alongside $\eta^\flat$, we can define a second commutative, associative, $\cO_{M^{\rm ss}}$-algebra structure on $\cX(M^{\rm ss})$ with unit $E$ via 
\beq
c^\flat (X, Y, Z) \coloneqq \eta^\flat(X, Y \star Z)\,, \quad 
    X\star Y  \coloneqq  E^{-1}\circ X \circ Y\,. 
    \label{eq:dualprod}
\eeq
The $\star$-product is compatible (in the sense of Frobenius algebras) with the pairing $\eta^\flat$: this is an immediate consequence of the $\circ$-product being compatible with respect to the pairing $\eta$. 

\begin{prop}[\cite{MR2070050,MR2836400}]
Let $\cM=(M,c,\eta, e,E)$ be a semi-simple conformal Frobenius manifold of charge $d=1$. Then $\cM=(M^{\rm ss}, c^\flat, \eta^\flat,E)$
is a semi-simple Frobenius manifold satisfying axioms {\bf FM1-4}, with flat unit $E \in \cX(M^{\rm ss})$.
\label{prop:dd}
\end{prop}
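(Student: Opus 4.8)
The plan is to verify axioms \textbf{FM1--FM4} for the 4-tuple $(M^{\rm ss}, c^\flat, \eta^\flat, E)$ directly, leaning on the conformality hypothesis $d=1$ at the two crucial points. First I would dispose of the algebraic axioms, which require no flatness input. Axiom \textbf{FM2} is essentially built into the definition \eqref{eq:dualprod}: the $\star$-product $X\star Y = E^{-1}\circ X\circ Y$ is manifestly commutative and associative wherever $E$ is $\circ$-invertible, its unit is $E$ itself, and the compatibility $c^\flat(X,Y,Z)=\eta^\flat(X, Y\star Z)$ is symmetric because, unwinding the definitions, $\eta^\flat(X,Y\star Z)=\eta(E^{-1}\circ X, E^{-1}\circ E^{-1}\circ X\circ Y\circ Z)$ is totally symmetric in $X,Y,Z$ by the $\eta$-compatibility of $\circ$. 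This establishes the pointwise Frobenius-algebra structure. The flatness of $\eta^\flat$ (half of \textbf{FM1}) is \emph{quoted}, not proved: it is exactly the consequence of \textbf{FM5--FM6} recorded from \cite[Lect.~3]{Dubrovin:1994hc} in the text above, so I would simply invoke it. Semi-simplicity of the new structure follows because $\star$ is obtained from $\circ$ by an invertible rescaling by $E^{-1}\circ(-)$, so the $\star$-discriminant coincides with $\mathrm{Discr}(\cM)\cap M^{\rm ss}=\emptyset$.

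The substantive content is axioms \textbf{FM3} and \textbf{FM4}, and this is where $d=1$ enters decisively. For \textbf{FM4}, I must show the new unit $E$ is horizontal for the Levi-Civita connection $\nabla^\flat$ of $\eta^\flat$. The clean route is to compute $\nabla^\flat$ in flat coordinates $(x_1,\dots,x_n)$ for $\eta^\flat$ and show $E$ has \emph{linear} components there; conformality tells us $E$ is covariantly linear for $\eta$ ($\nabla\nabla E=0$), and the relation between the two Levi-Civita connections can be controlled because $\eta^\flat$ is obtained from $\eta$ by the conformal-type rescaling encoded in \eqref{eq:dualmetric}. Concretely, $\eta^\flat = \iota_E \eta^{-1}$-type intersection forms arise as the degree-shifted member of a flat pencil $\eta^\flat + \lambda\,\eta$; the flatness of the whole pencil (again a \textbf{FM5--FM6} consequence) forces $E$ to be an affine vector field in the $x$-coordinates, hence $\nabla^\flat E$ is constant, and the charge-$1$ normalisation pins this constant to zero so that $\nabla^\flat E = 0$. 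I would present \textbf{FM4} as the assertion that the Euler field $E$, which scales $\eta$ by weight $d-2=-1$, becomes covariantly constant after the duality rescaling precisely when $d=1$.

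Axiom \textbf{FM3}, the total symmetry of $\nabla^\flat_W c^\flat(X,Y,Z)$, is the main obstacle and the heart of the proposition. The efficient strategy is to exhibit a dual prepotential $F^\flat$ with $c^\flat_{ABC}=\de_{x_A}\de_{x_B}\de_{x_C}F^\flat$ in the $\eta^\flat$-flat chart, since existence of such a potential is equivalent to \textbf{FM3} via the flat Poincaré lemma. Following \cite{MR2070050}, one checks that the $\star$-structure constants, written in flat coordinates for $\eta^\flat$, have totally symmetric first derivatives; the key computation relates $\nabla^\flat c^\flat$ back to $\nabla c$ through the rescaling, and the associativity of the pencil (the deformed WDVV equations satisfied by $F$ together with its Euler data) is what makes the would-be cross terms cancel. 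I expect the bookkeeping of the connection difference $\nabla^\flat - \nabla$ and its interaction with the Euler field to be the delicate part: one must verify that the extra Christoffel-symbol contributions, which are governed by the derivatives of $E^{-1}$ in the $\circ$-algebra, reorganise into a totally symmetric tensor, and this is exactly where the charge constraint $d=1$ guarantees the grading works out. Once the dual potential $F^\flat$ is produced (equivalently, once \textbf{FM3} is verified), the four axioms \textbf{FM1--FM4} all hold, and since conformality \textbf{FM6} is deliberately \emph{not} claimed for $\cM^\flat$, the proposition is complete. The cleanest write-up would therefore be: (i) algebraic axioms and semi-simplicity by inspection, (ii) flatness of $\eta^\flat$ by citation, (iii) horizontality of $E$ from the flat pencil plus $d=1$, and (iv) symmetry of $\nabla^\flat c^\flat$ via the dual prepotential.
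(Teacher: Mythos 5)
The paper offers no proof of this proposition: it is imported wholesale from \cite{MR2070050,MR2836400}, so there is no in-house argument to measure yours against. Your sketch is a faithful reconstruction of Dubrovin's proof and correctly separates the purely algebraic verifications from the two points that carry all the content --- the flatness of $\eta^\flat$ and the total symmetry of $\nabla^\flat c^\flat$ (equivalently, the existence of $F^\flat$) --- both of which you, like the paper, ultimately quote rather than prove; that is acceptable here, but you should present your steps (ii) and (iv) explicitly as citations rather than as things you have ``checked''. Two local corrections. First, your unwinding of the compatibility is garbled: by \eqref{eq:dualmetric} and \eqref{eq:dualprod} it should read $\eta^\flat(X,Y\star Z)=\eta\l(E^{-1}\circ X,\,E^{-1}\circ Y\circ Z\r)=\eta\l(e,\,E^{-1}\circ E^{-1}\circ X\circ Y\circ Z\r)$, which is the expression that is manifestly totally symmetric; as written your right-hand side contains a spurious extra factor of $X$. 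Second, the point where ``the charge-$1$ normalisation pins the constant to zero'' deserves to be stated as the identity it is: the computation with the contravariant Christoffel symbols of the flat pencil generated by $\eta$ and $\eta^\flat$ shows that $\nabla^\flat E$ is a constant multiple of the identity endomorphism of $\cT_{M^{\rm ss}}$, the constant being proportional to $1-d$; hence $E$ is $\nabla^\flat$-parallel precisely when $d=1$, and for $d\neq 1$ one only obtains an almost-Frobenius structure in the sense of the paper's footnote (non-flat unit). With these two repairs, your outline (i)--(iv) is the standard argument behind the cited result.
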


In particular, if $d=1$ and in a flat chart $(x_1, \dots, x_n)$ for the intersection form $\eta^\flat$, there exists a solution $F^\flat$ of the WDVV equations \eqref{eq:wdvv} (with $\eta$, $(t_A)_\alpha$ replaced by $\eta^\flat$, $(x_i)_i$) where furthermore 
\[
E= \frac{\de}{\de x_1}\,, \qquad \eta^\flat_{ij} = \frac{\de^3 F^\flat}{\de x_1 \de x_i \de x_j}\,.
\]

The Frobenius manifold structure $\cM^\flat = (M^{\rm ss}, c^\flat, \eta^\flat, E)$ induced by $F^\flat$ will be called the {\it Dubrovin-dual} structure to $\cM$. By \cref{prop:dd} and \eqref{eq:dualprod}, it will satisfy axioms {\bf FM1-5}, but it will not generally satisfy the additional conformality axiom {\bf FM6}.
The two perfect pairings $\eta$ and $\eta^\flat$ on $\cT_M^{\rm ss}$ locally induce isometries between tangent and cotangent fibres,
\beq
\l(T_p M^{\rm ss}, \eta\r) \simeq \l(T^*_p M^{\rm ss}, \eta^{-1}\r)\,, \qquad 
\l(T_p M^{\rm ss}, \eta^\flat\r) \simeq \l(T^*_p M^{\rm ss}, (\eta^\flat)^{-1}\r)\,.
\label{eq:isocotg}
\eeq
In Einstein's convention, this would be the familiar operation of ``raising the indices'' with $\eta^{AB}$ (resp. $(\eta^\flat)^{ij}$). The isomorphisms \eqref{eq:isocotg} define two, a priori distinct, $\cO_{M^{\rm ss}}$-algebra structures on holomorphic 1-forms  $\theta, \chi \in \Omega_{M^{\rm ss}}$,
\beq
\theta\, \widehat{\circ}\, \chi\,, \qquad \theta\, \widehat{\star}\, \chi\,, 
\eeq
respectively given by the dual of the $\circ$-product under $\eta^{-1}$, and the dual of the $\star$-product under $(\eta^\flat)^{-1}$ in \eqref{eq:isocotg}. By \eqref{eq:dualmetric} and \eqref{eq:dualprod}, these two dual products on $\Omega_M$ are in fact identically isomorphic:
\beq
\theta\, \widehat{\circ}\, \chi = \theta\, \widehat{\star} \chi\,.
\label{eq:ddprod_id}
\eeq
Spelling out \eqref{eq:ddprod_id} in the respective flat charts for $\eta$ and $\eta^\flat$ results in the following non-trivial relation between the prepotentials of $\cM$ and $\cM^\flat$:
%
\begin{equation}
    \frac{\partial^3 F^\flat}{\partial x_{i}\partial x_{j}\partial x_{k}} = \sum_{a,b,A,B,C,M,N=1}^n\eta^\flat_{ia} \eta^\flat_{jb}\frac{\partial t_C}{\partial x_k}\frac{\partial x_a}{\partial t_A}\frac{\partial x_b}{\partial t_B} \eta^{AM} \eta^{BN}
    \frac{\partial^3 F}{\partial t_C \partial t_M\partial t_N}\,.
    \label{eq:AD_formula}
\end{equation}

\subsection{Frobenius manifolds of type $(\RR, \widehat{\omega})$}
\label{sec:frobRomega}

With these preliminaries, we shall consider three classes of semi-simple Frobenius manifolds associated to a marked ADE pair $(\RR, \widehat{\omega})$.

\subsubsection{Extended affine Weyl Frobenius manifolds}

Let $\mathfrak{g}=\mathfrak{ade}_l$ be the rank-$l$ complex, simple, simply-laced Lie algebra with root system $\mathcal{R}$. We shall denote by $\mathfrak{h}$ the associated Cartan subalgebra,  and by $\mathcal{W}$ its Weyl group. The action of $\mathcal{W}$ on $\mathfrak{h}$ lifts to an action of the affine Weyl group $\widehat{\mathcal{W}} \cong \mathcal{W} \ltimes  \Lambda^\vee_r$, with  $\Lambda_r^\vee$  the lattice of co-roots:
 \begin{align}
    \widehat{\mathcal{W}} \times \mathfrak{h} & \longrightarrow \mathfrak{h}, \nn \\
    ((w, \alpha^\vee), h) & \longrightarrow w(h) + 2\pi\ri\alpha^\vee.
\end{align}
For $(\RR, \widehat{\omega})$ a marked ADE pair, the corresponding $\widehat{\omega}$-\textit{extended} affine Weyl group $\widetilde{\mathcal{W}}$
 is defined as the semi-direct product $\widetilde{\mathcal{W}} \coloneqq \widehat{\mathcal{W}} \ltimes \mathbb{Z}$ acting on $\mathfrak{h} \oplus \mathbb{C}$ by 
\begin{align}
    \widetilde{\mathcal{W}} \times {\mathfrak{h}} \oplus \mathbb{C} & \longrightarrow \mathfrak{h} \oplus \mathbb{C}, \nn \\
    ((w, \alpha^\vee, {l}), (h, v)) & \longrightarrow (w(h) + 2\pi\ri \alpha^\vee + 2\pi\ri l \widehat{\omega}, {v} - 2\pi\ri l){.} 
    \label{eq:extweyl}
\end{align}
Let  $\Sigma$ denote the hyperplane arrangement associated to the root system $\RR$, and $\mathfrak{h}^{\text{reg}} \coloneqq \mathfrak{h} \backslash  \Sigma $ be the set of regular elements in $\mathfrak{h}$. The restriction of \eqref{eq:extweyl} to $\mathfrak{h}^{\rm reg} \oplus \bbC$ is a free affine action, whose quotient defines the regular orbit space of the extended affine Weyl group of $\RR$ with marked weight $\widehat{\omega}$ as
\begin{equation}
M_{\rm AW} \coloneqq (\mathfrak{h}^{\text{reg}} \times \mathbb{C})/ \widetilde{\mathcal{W}}  
\cong \mathcal{T}^{\text{reg}}/\cW \times \mathbb{C}^*, 
   \label{eq:EAWmaniLG}
\end{equation}
where $\mathcal{T}^{\text{reg}} = \exp( \mathfrak{h}^{\text{reg}})$ is the image of the set of regular elements of $\mathfrak{h}^{\rm reg}$ under the exponential map to the maximal torus $\mathcal{T}$. 
Let $(x_1, \dots , x_{l})$ be linear coordinates on $\mathfrak{h}$ w.r.t. the co-root basis $\{\alpha_1^\vee, \dots , \alpha_{l}^\vee\}$, and extend these to linear coordinates $(x_1, \dots , x_{l}; x_{l+1})$ on $\mathfrak{h} \oplus \bbC$, giving local coordinates on the regular orbit space. Denoting
$\bra \alpha, \beta \ket$ the pairing on $\mathfrak{h}^*$ induced by the restriction of the Killing form on the Cartan subalgebra, and writing \[C_{ab} = \left\langle \alpha_a,  \alpha_b^\vee\right\rangle\,, \quad d_a \coloneqq \bra \omega_a, \widehat{\omega}\ket\,, \quad \widehat{d} \coloneqq \bra \widehat{\omega}, \widehat{\omega}\ket\,,\] 
we can define a non-degenerate pairing $\xi$ on  $\mathfrak{h} \times \mathbb{C}$ by orthogonal extension of minus the Cartan--Killing form on $\mathfrak{h}$ as
\begin{equation}
      \eta^\flat_{\rm AW}(\partial_{x_a}, \partial_{x_b}) \coloneqq
  \begin{cases}
                                   -C_{ab} & \text{if } a, b< l+1, \\
                                   \widehat{d} & \text{if } a=b=l+1, \\
                                   0 & \text{otherwise,}
  \end{cases}
\label{eq:inter_form}
\end{equation}
with $x_{l+1}$ parametrising linearly the right summand in $\mathfrak{h} \oplus \mathbb{C}$. 
The quotient map $\aleph: \mathcal{T}^{\text{reg}} \times \mathbb{C}^* \rightarrow M_{\rm AW}$ from \eqref{eq:EAWmaniLG} defines a principal {$\mathcal{W}$}-bundle on $M_{\rm AW}$: a section $\tilde{\sigma}_i$ lifts a (sufficiently small) open $U \subset M_{\rm AW}$ to the $i^\text{th}$ sheet of the cover $V_i \in \widetilde{\sigma}_i^{-1}(U) \equiv V_1 \sqcup \dots \sqcup V_{|\mathcal{W}|} $. The invariant ring $\cI \coloneqq \bbC[\cT]^{\cW}$ is, from classical results about exponential Weyl invariants \cite{B02}, a polynomial ring
$\cI \simeq \bbC[Y_1, \dots , Y_l],$
where 
    \beq 
    Y_i \coloneqq S_\cW(\re^{\bra \omega_i, h\ket})\,,
    \label{eq:Yi}
    \eeq
and $S_\cW$ is the average over the Weyl orbit. Equivalently, we have 
$\cI \simeq \bbC[W_1, \dots , W_l],$
where
\beq 
W_i \coloneqq \sum_{\omega\in \Gamma_i}\re^{\bra \omega, h\ket}\,.
\label{eq:Wi}
\eeq

\begin{prop}[\text{\cite[Thm.~1.1]{MR1606165}}]
Let  $\mathcal{A}$ be the ring of $\widetilde{\cW}$-invariant Fourier polynomials in the variables $x_1,\dots, x_l$ and $\frac{x_{l+1}}{f}$ that are bounded in the limit:
    \begin{equation}
        x = x^{[0]} + \widehat{\omega} \tau ,\quad x_{l+1} = x^{[0]}_{l+1} - \tau ,\quad \tau \to +\infty,
    \label{limit}
    \end{equation}
    for any $\big(x^{[0]},x^{[0]}_{l+1}\big)$, where $f$ is the determinant of the Cartan matrix. Then 
    \[
    \mathcal{A} \simeq \bbC\l[y_1, \dots , y_l, \re^{y_{l+1}}\r]\,,
    \]
with
\beq
y_\alpha= 
\begin{cases}
\re^{ d_\alpha x_{l+1}}Y_\alpha\,, & \alpha=1,\dots, l\,, \\
x_{l+1}\,, & \alpha=l+1\,.
\end{cases}
\label{eq:yi}
\eeq
and $d_i := \langle \omega_i,\widehat{\omega} \rangle$. 
\label{prop:Aring}
\end{prop}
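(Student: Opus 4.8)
The plan is to produce the claimed generators explicitly as bounded $\widetilde{\cW}$-invariant Fourier polynomials, and then to show that they exhaust $\cA$ by imposing the defining conditions in two stages: first invariance under the extended $\bbZ$-factor, then the growth condition \eqref{limit}. First I would pass to multiplicative coordinates $u_a \coloneqq \re^{x_a}$ and $s\coloneqq \re^{x_{l+1}/f}$. Since $\langle\omega_b,h\rangle = x_b$, weight-lattice exponentials $\re^{\langle\lambda,h\rangle}$ become Laurent monomials in the $u_a$; invariance under the coroot translations in $\widehat{\cW}$ is automatic because $\re^{2\pi\ri}=1$, and $\cW$-invariance in the $u_a$ is, by the classical theory of exponential Weyl invariants \cite{B02}, precisely membership in $\cI=\bbC[Y_1,\dots,Y_l]$. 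Thus every $\widehat{\cW}$-invariant Fourier polynomial is a finite sum $\sum_k c_k(Y)\,s^k$ with $c_k\in\bbC[Y_1,\dots,Y_l]$ and $k\in\bbZ$, and the whole problem reduces to understanding the extra $\bbZ$-action and boundedness on this ring.

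Next I would record how the extended generator $(h,v)\mapsto(h+2\pi\ri\widehat\omega,\, v-2\pi\ri)$ from \eqref{eq:extweyl} acts. Because $w\omega_\alpha-\omega_\alpha$ lies in the root lattice, on which $\omega_\alpha$ pairs integrally in the simply-laced normalisation, one has $\langle w\omega_\alpha,\widehat\omega\rangle\equiv d_\alpha\pmod\bbZ$ for all $w\in\cW$; hence $Y_\alpha\mapsto\re^{2\pi\ri d_\alpha}Y_\alpha$ and $s\mapsto\re^{-2\pi\ri/f}s$. As $d_\alpha=\langle\omega_\alpha,\widehat\omega\rangle$ is an entry of the inverse Cartan matrix, $fd_\alpha\in\bbZ$, and the combinations $y_\alpha=s^{fd_\alpha}Y_\alpha$ ($\alpha\le l$) and $\re^{y_{l+1}}=s^f$ are then manifestly fixed. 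Boundedness is checked directly along \eqref{limit}: $Y_\alpha$ grows like $\re^{d_\alpha\tau}$ — the dominant weight $\omega_\alpha$ realises the maximal pairing with $\widehat\omega$ over its Weyl orbit — which is exactly cancelled by the factor $\re^{-d_\alpha\tau}$ coming from $s^{fd_\alpha}=\re^{d_\alpha x_{l+1}}$, so each $y_\alpha$ tends to a finite limit, while $\re^{y_{l+1}}=\re^{x_{l+1}}\to0$. Algebraic independence of the generators follows since $(Y_\alpha,s)\mapsto(y_\alpha,s^f)$ is invertible after inverting $s^f$. This gives the inclusion $\bbC[y_1,\dots,y_l,\re^{y_{l+1}}]\subseteq\cA$.

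For the reverse inclusion I would impose the two remaining conditions on $P=\sum_k c_k(Y)s^k$. The generator acts diagonally on the monomial basis $\{Y^{\mathbf m}s^k\}$, multiplying $Y^{\mathbf m}s^k$ (with $Y^{\mathbf m}=\prod_\alpha Y_\alpha^{m_\alpha}$) by $\exp 2\pi\ri\big(\sum_\alpha m_\alpha d_\alpha-k/f\big)$; invariance therefore forces, monomial by monomial, $k\equiv\sum_\alpha m_\alpha(fd_\alpha)\pmod f$. Rewriting $Y^{\mathbf m}s^k=\big(\prod_\alpha y_\alpha^{m_\alpha}\big)(s^f)^{(k-\sum_\alpha m_\alpha fd_\alpha)/f}$ exhibits $P$ as an element of $\bbC[y_1,\dots,y_l,(s^f)^{\pm1}]=\bbC[y_1,\dots,y_l,\re^{y_{l+1}},\re^{-y_{l+1}}]$. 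Finally boundedness removes the negative powers: writing $P=\sum_j Q_j(y)\,(\re^{y_{l+1}})^j$ with $Q_j\in\bbC[y_1,\dots,y_l]$, and using that $\re^{y_{l+1}}\to0$ along \eqref{limit} while the $y_\alpha$ approach limiting values that are Zariski-dense in $\bbC^l$ as $(x^{[0]},x^{[0]}_{l+1})$ vary, any lowest negative index $j$ with $Q_j\not\equiv0$ would make $P$ blow up along a generic ray. Hence $j\ge0$ and $P\in\bbC[y_1,\dots,y_l,\re^{y_{l+1}}]$, completing the equality.

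The step I expect to be the main obstacle is this last boundedness argument: one must exclude cancellations, i.e.\ show that no linear combination of negative powers of $\re^{y_{l+1}}$ can remain bounded in the limit. The crux is the Zariski-density of the limiting values of $(y_1,\dots,y_l)$, which guarantees that a nonzero polynomial coefficient $Q_j$ cannot vanish identically along the family of rays \eqref{limit}; granting this, the remaining phase and growth bookkeeping in the preceding steps is routine.
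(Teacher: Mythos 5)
The paper does not actually prove this proposition: it is quoted from Dubrovin--Zhang \cite[Thm.~1.1]{MR1606165} and used as a black box, so there is no in-text argument to compare yours against. Judged on its own terms, your proposal follows the standard route and most of it is sound: the reduction of $\widehat{\cW}$-invariance to membership in $\bbC[Y_1,\dots,Y_l][s^{\pm1}]$ with $s=\re^{x_{l+1}/f}$, the computation of the character by which the extra $\bbZ$-generator scales $Y_\alpha$ and $s$ (via $\langle w\omega_\alpha,\widehat\omega\rangle\equiv d_\alpha \bmod \bbZ$ and $f d_\alpha\in\bbZ$), the resulting identification of the full invariant ring with $\bbC[y_1,\dots,y_l,\re^{\pm y_{l+1}}]$, the boundedness of the generators via maximality of $\langle\omega_\alpha,\widehat\omega\rangle$ over the Weyl orbit, and the algebraic independence are all correct.

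The genuine gap is the one you flag yourself: the Zariski-density of the limit values $(y_1^\infty,\dots,y_l^\infty)$, which is what kills the coefficient $Q_{j_0}$ of the most negative power of $\re^{y_{l+1}}$. The claim is true, but it is not peripheral bookkeeping that can be ``granted'': it is equivalent to the algebraic independence of the functions $y_\alpha^\infty=\re^{d_\alpha x^{[0]}_{l+1}}\,S_{\cW_{\widehat\omega}}\big(\re^{\langle\omega_\alpha,h_0\rangle}\big)$, where $\cW_{\widehat\omega}=\mathrm{Stab}_\cW(\widehat\omega)$ and the orbit sum is truncated to the weights maximising the pairing with $\widehat\omega$; proving that requires essentially the same highest-weight/leading-monomial argument that would finish the proof directly, so as written the crux of the surjectivity direction is asserted rather than established. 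A cleaner way to close the argument avoids density altogether: since the characters $\re^{\langle\lambda,h_0\rangle+m x^{[0]}_{l+1}/f}$ of the base point are linearly independent, boundedness of a Fourier polynomial along \eqref{limit} for \emph{all} base points is equivalent to the inequality $\langle\lambda,\widehat\omega\rangle\le m/f$ holding monomial by monomial. If some $Q_j\neq0$ with $j<0$, take $j_0$ minimal and, among the monomials $y^{\mathbf m}$ occurring in $Q_{j_0}$, one whose weight $\sum_\alpha m_\alpha\omega_\alpha$ is maximal for a total order refining the dominance order; the Fourier monomial $\re^{\langle\sum_\alpha m_\alpha\omega_\alpha,h\rangle}\,\re^{(j_0+\sum_\alpha m_\alpha d_\alpha)x_{l+1}}$ then survives with nonzero coefficient and violates the inequality by exactly $-j_0>0$. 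Some version of this leading-term analysis must be supplied before your proof is complete.
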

In the following, for $k \in \{1,\dots, l\}$ such that $\omega_{\bar k} = \widehat{\omega}$, we will denote
$\widehat{y}\coloneqq y_{\bar k}$.
We will define a grading on $\cA$ by 
\begin{align*}
\deg\, y_\alpha =  d_\alpha \, (\alpha=1,\dots, l)\,, \qquad \deg\, \re^{y_{l+1}}= 1\,.
\end{align*}
The following reconstruction theorem holds \cite[Thm~2.1]{MR1606165}.

\begin{thm}
There exists a unique, up to isomorphism, semi-simple and conformal Frobenius manifold \[\cM_{\rm AW}=(M_{\rm AW}, c_{\rm AW},  \eta_{\rm AW}, e, E)\] of charge $d=1$ satisfying the following properties in flat coordinates $(t_1, \dots , t_{l+1})$ for $\eta_{\rm AW}$:
\begin{description}
    \item[AW-I] $\widehat{d} E =  \partial_{x_{l+1}} = \sum_{J=1}^{l} d_J t_J \partial_{{t}_J} + \partial_{{t}_{l+1}}$;
    \item[AW-II] the intersection form is $\eta^\flat = \tilde{\sigma}_i^* \xi$;
    \item[AW-III] the prepotential is polynomial in $t_1, \dots, t_{l+1}$, $\re^{t_{l+1}}$.
\end{description}
\label{thm:EAW}
\end{thm}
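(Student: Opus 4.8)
\emph{Strategy.} The plan is to deduce the theorem from Dubrovin's equivalence between semi-simple conformal Frobenius manifolds and quasi-homogeneous, regular flat pencils of contravariant metrics \cite[Lect.~1, Lect.~3 \& App.]{Dubrovin:1994hc}: the prescribed data are the intersection form $g\coloneqq \tilde{\sigma}_i^\ast\xi=\eta^\flat_{\rm AW}$ of {\bf AW-II} and the Euler field $E$ of {\bf AW-I}, and the output is the Saito metric $\eta_{\rm AW}$, the product $c_{\rm AW}$ and the unit $e$. First I would check that $g$ descends to a flat contravariant metric on $M_{\rm AW}$: the pairing $\xi$ of \eqref{eq:inter_form} is constant in the linear coordinates $(x_1,\dots,x_{l+1})$ on $\mathfrak h\oplus\bbC$, and $\widetilde{\cW}$ acts by affine $\xi$-isometries in \eqref{eq:extweyl} (the Weyl factor preserves $-C_{ab}$ and fixes the $\bbC$-summand; the coroot and marking translations are $\xi$-isometries), so the induced contravariant metric on invariant functions is flat. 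Using \cref{prop:Aring} and the classical structure of exponential Weyl invariants \cite{B02}, I would then verify that the components $g^{\alpha\beta}=\xi(\rd y_\alpha,\rd y_\beta)$ lie in $\cA$ and are quasi-homogeneous of the expected weight under the grading $\deg y_\alpha=d_\alpha$, $\deg \re^{y_{l+1}}=1$.

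\emph{Construction of the Saito metric and the pencil.} The core step is to produce the flat metric via Dubrovin's lemma $\eta_{\rm AW}=\mathcal{L}_e\, g$, where $e$ is the distinguished constant vector field that will serve as the Frobenius unit; identifying $e$ correctly (the direction singled out by the marking $\widehat\omega$) is itself part of the task. I would then prove, separately, that (i) in a suitable linear recombination $(t_1,\dots,t_{l+1})$ of the generators $y_\alpha$ and $\re^{y_{l+1}}$ the Gram matrix of $\eta_{\rm AW}$ is constant and non-degenerate, so that $\eta_{\rm AW}$ is flat and $(t_A)$ is the flat chart of the statement; and (ii) $\{g,\eta_{\rm AW}\}$ is a flat pencil, i.e. $g-\lambda\,\eta_{\rm AW}$ is flat for every $\lambda$ with contravariant Christoffel symbols affine in $\lambda$. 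Both rest on the quasi-homogeneity of $g$ together with the triangular behaviour of the $y_\alpha$ in the limit \eqref{limit}: this is exactly where the marking and the boundedness condition defining $\cA$ in \cref{prop:Aring} enter, ensuring that $\mathcal{L}_e$ lowers weights by a fixed amount and truncates the $\re^{y_{l+1}}$-dependence. Granting the pencil, Dubrovin's reconstruction returns a product $c_{\rm AW}$ satisfying {\bf FM1--FM4}; conformality {\bf FM6} follows since $E$ in {\bf AW-I} is affine-linear in $(t_A)$, hence covariantly linear, and comparing weights gives $\mathcal{L}_E\eta_{\rm AW}=-\eta_{\rm AW}$, i.e.\ charge $d=1$. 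Semi-simplicity {\bf FM5} holds because the endomorphism $\eta_{\rm AW}^{-1}g$ has generically distinct eigenvalues—the canonical coordinates of $\cM_{\rm AW}$—away from the divisor given by the image of the reflection arrangement.

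\emph{Polynomiality and uniqueness.} For {\bf AW-III} I would use the grading once more: since the flat coordinates $t_A$ are weighted-homogeneous generators of $\cA$, the WDVV system \eqref{eq:wdvv} together with quasi-homogeneity forces the prepotential $F$ to be a weighted-homogeneous element of $\cA$, hence polynomial in $t_1,\dots,t_{l+1}$ and $\re^{t_{l+1}}$; its finitely many coefficients are fixed by integrating the structure constants $\eta_{\rm AW}(\partial_{t_A}\circ\partial_{t_B},\partial_{t_C})$ twice. For uniqueness up to isomorphism, any Frobenius manifold obeying {\bf AW-I--III} shares the same intersection form, Euler field and grading; by Dubrovin's lemma its flat metric must again be $\mathcal{L}_e g$ for the same distinguished $e$, so the Saito metric and product agree, and the only residual freedom—the $\GL(l+1,\bbC)$-action on flat frames and the additive and quadratic terms in $F$ that leave $c_{\rm AW}$ unchanged—is precisely the notion of isomorphism of Frobenius manifolds.

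\emph{Main obstacle.} I expect the crux to be step (ii): verifying that $\{g,\eta_{\rm AW}\}$ is genuinely a flat pencil, equivalently that $\eta_{\rm AW}=\mathcal{L}_e g$ is flat and its Levi-Civita connection is compatible with that of $g$. In the polynomial Coxeter case this is Saito's theorem, but the extra exponential variable $\re^{y_{l+1}}$ and the non-reductive extended group $\widetilde{\cW}$ make the invariant-theoretic bookkeeping substantially more delicate; controlling the components of $\eta_{\rm AW}$ and their flatness relies on the precise limiting behaviour \eqref{limit} and the explicit generators \eqref{eq:yi}, and is where essentially all the computation resides.
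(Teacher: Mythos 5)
This theorem is not proved in the paper at all: it is imported verbatim from Dubrovin--Zhang \cite[Thm~2.1]{MR1606165}, and your outline reproduces essentially the strategy of that original proof --- descend the contravariant pairing $\xi$ to a flat metric $g=\eta^\flat_{\rm AW}$ on the orbit space, set $\eta_{\rm AW}=\mathcal{L}_e\,g$ for the distinguished direction singled out by $\widehat{\omega}$, verify via the invariant theory of $\widetilde{\cW}$ and the boundedness condition of \cref{prop:Aring} that $(g,\eta_{\rm AW})$ is a quasi-homogeneous flat pencil, and reconstruct $(c_{\rm AW},e,E)$ together with polynomiality and uniqueness from degree counting. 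You also correctly identify where the real work lies (flatness of $\mathcal{L}_e g$ and compatibility of the pencil, which your sketch does not carry out), so the proposal is a faithful, if schematic, account of the cited proof rather than a new route.
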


\subsubsection{Quantum cohomology of ADE resolutions}
\label{sec: equi_geometry}

Let $\mathsf{G}<\mathrm{SU}(2)$, $|\mathsf{G}|<\infty$ be a finite subgroup of $\mathrm{SU}(2)$. The classical McKay correspondence  classifies $\mathsf{G}$ by root systems of type $\RR=\mathrm{ADE}_l$:
\[
\mathsf{G} \simeq 
\begin{cases}
    \bbZ/(l+1)\bbZ & \RR=\mathrm{A}_l\,, \\
    BD_{4l-2}=\bra 2,2,l\ket  & \RR=\mathrm{D}_l\,, \\
    BT=\bra 2,3,3\ket & \RR=\mathrm{E}_6\,, \\
    BO=\bra 2,3,4\ket & \RR=\mathrm{E}_7\,, \\
    BI=\bra 2,3,5\ket & \RR=\mathrm{E}_8\,. \\
\end{cases}
\]

The corresponding du~Val singularity is defined as the affine scheme \[ X \coloneqq \mathrm{Spec}\l(\mathbb{C}[x,y]^{\mathsf{G}}\r)\,,\]
where the action on $\mathbb{C}^2$ by $\mathsf{G}$ is induced by restriction of the fundamental representation of $SU(2)$. Klein's classical presentation of the ring of invariants, 
\[
\mathbb{C}[x,y]^{\mathsf{G}} \simeq \frac{\mathbb{C}[u,v,w]}{\bra \cI_\mathsf{G}\ket }\,,
\]
where
\[
\cI_\mathsf{G}(u,v,w) = 
\begin{cases}
    w^2+u^2+v^{l+1} & \RR=\mathrm{A}_l\,, \\
    w^2+v(u^2+v^{l-2})&  \RR=\mathrm{D}_l\,, \\
    w^2+u^3 +v^4 & \RR=\mathrm{E}_6\,, \\
    w^2+u^3 +u v^3 & \RR=\mathrm{E}_7\,, \\
    w^2+u^3 +v^5 & \RR=\mathrm{E}_8\,, \\
\end{cases}
\]
realises $X$ as a hypersurface in $\bbC^3$ with an isolated singularity at the origin. There is a well-known canonical minimal resolution 
\beq
\begin{tikzcd}
Z  \arrow[r, "\pi"] &  X
\end{tikzcd}
\label{eq:piZX}
\eeq
obtained through a sequence of blowing-ups of the singularity \cite{MR1886756}. The intersection diagram describing the configuration of irreducible rational curves in the exceptional locus is the Dynkin diagram of the corresponding ADE type, and the integral homology of the resolution, \[\mathrm{H}_\bullet(Z, \bbZ) = \mathrm{H}_0(Z, \bbZ)\oplus \mathrm{H}_2(Z, \bbZ) \simeq \bbZ \oplus \bbZ^l\,,\]
is isomorphic to the affine root lattice of type $\RR$. Writing
\[\Pi \coloneqq \{\alpha_1, \dots, \alpha_l\}\,, \qquad \Omega \coloneqq \{\omega_1, \dots, \omega_l\}
\]
for, respectively,  the set of simple roots and fundamental weights of the complex simple Lie algebra associated to $\RR$. 
we will label the irreducible components $\{e_1,\dots,e_l\}$ of the exceptional locus accordingly, so that 
\beq 
e_i \longleftrightarrow \alpha_i \longleftrightarrow \omega_i\,.
\label{eq:mckaydivroots}
\eeq

Let $\mathsf{T}\simeq \bbC^\times$ and consider a $\mathsf{T}$-representation on $\bbC^2$  commuting with the action of $\mathsf{G}$. This induces $\mathsf{T}$-actions on $X$ and $Z$, respectively by descent to the quotient and by $\mathsf{T}$-equivariance of the resolution \eqref{eq:piZX}. 
When $\RR=\mathrm{DE}_l$, we have a unique possible choice for the action of $\mathsf{T}$: this is the scalar torus action on $\bbC^2$ with characters $(t,t)$ on the affine coordinates $(x,y)$ of $\bbC^2$. 
When $\RR=\mathrm{A}_l$, since $\mathsf{G}$ is abelian, we have more generally that the full Cartan torus \[ \mathsf{T}'\coloneqq (\bbC^\times)^2 < \mathrm{GL}(2,\bbC)\,,\] with characters $(t_x, t_y)$ 
will act effectively on $X$ and $Z$. For $\bar k \in \{1,\dots,l\}$, we will be specially interested in the one-dimensional subtori $\mathsf{T}$ acting with characters $(t_x,t_y)=(t^{l+1 -\bar k}, t^{\bar k})$.

As the singularity is the only torus fixed point in $X$, the $\mathsf{T}$-fixed locus of $Z$ is fully contained in the exceptional set. We will write $[e_i]^\vee$ for the Poincar\'e dual classes to $[e_i]$ in the locally compact cohomology, $\varphi_i$ for their canonical lifts to $\mathsf{T}$-equivariant cohomology, and $\varphi_{l+1}$ for the identity class in $\mathrm{H}_{\mathsf{T}}(Z)$. The $\mathsf{T}$-equivariant cohomology of $Z$ is an $(l+1)$-dimensional vector space over the field of fractions of $H(B\mathsf{T}, \bbC) \simeq \bbC[\nu]$, where $\nu$ is the first Chern class of the $\mathsf{T}$-representation $t$.  For $\RR=\mathrm{A}_l$, we can more generally consider the equivariant cohomology of $Z$ with respect to the full 2-dimensional torus $\mathsf{T}'$: this will now be an $(l+1)$-dimensional vector space over $\bbC(\nu_1, \nu_2)$, with $\nu_i \coloneqq c_1(t_i)\in \mathrm{H}_{\mathsf{T}'}(\mathrm{pt})$. We will slightly abuse notation and continute to write $\varphi_1,\dots,\varphi_l,\varphi_{l+1}$ for the lift of $e^\vee_i$ in the $\mathsf{T}'$-equivariant cohomology of $Z$.

\begin{rmk}
For $(\RR, \widehat{\omega})$ a marked ADE pair, the marked fundamental weight
$\widehat{\omega} \in \Omega$ corresponds to the (single) irreducible exceptional divisor $\widehat{e} \in \mathrm{H}_2(Z, \bbZ)$ which is point-wise fixed by $\mathsf{T}$
\[
\widehat{e} \longleftrightarrow \widehat{\alpha} \longleftrightarrow \widehat{\omega}\,.
\]
To see this, notice that each double point in the exceptional locus is $\mathsf{T}$-fixed. For $\RR=\mathrm{DE}_l$, 
this entails that the $\mathsf{T}$-action on the irreducible component $\widehat{e}$ labelled by the trivalent marked node in \cref{fig:dynkin} must be trivial, since $\widehat{e}$ is a $\bbP^1$ with three $\mathsf{T}\simeq \bbC^\times$-fixed points. 
When $\RR=\mathrm{A}_l$, the 
weights $w_i^{\pm}$ at the two $\mathsf{T}'$-fixed points of $e_i$ are \cite[App.~B]{Brini:2013zsa} 
\[
(w_i^{+}, w_{i}^{-}) = (- i \nu_1 + (l+1 - i) \nu_2,  i \nu_1 - (l+1 - i)\nu_2)\,.
\]
As discussed above, the restriction to a one-dimensional torus $\mathsf{T}$ acting with characters $(t^{l+1-\bar k}, t^{\bar k})$ sets
\beq
\nu_1 = (l+1-\bar k) \nu\,, \qquad 
\nu_2 = \bar k \nu\,,
\label{eq:ZxCtoZ}
\eeq
so that $w_{\bar k}^\pm =0.$ Hence, in this case,
\[
e_{\bar k} \longleftrightarrow \alpha_{\bar k} \longleftrightarrow \omega_{\bar k}=\widehat{\omega}\,.
\] 
\label{rmk: 1d_torus}
\end{rmk}
\vspace{-.5cm}
The torus action on $Z$ induces an 
action on its moduli space of stable maps $\overline{\mathcal{M}}_{g,n}(Z,\beta)$. Since the $\mathsf{T}$-action on  $Z$ is free away from the exceptional locus, its fixed locus in $\overline{\mathcal{M}}_{g,n}(Z,\beta)$ is proper, and it carries a torus-equivariant perfect obstruction theory and  virtual fundamental class \cite{MR1666787}. Gromov--Witten invariants of $Z$ can then be defined by 
\[
\langle \varphi_{i_1}, \dots, \varphi_{i_n}\rangle_{g,n,d}^{Z}:=\intEquiv{\mathsf{T}}_{[\overline{\mathcal{M}}_{g,n}(Z,
\beta)]_{\mathsf{T}}^{\rm vir}} \prod_{j=1}^n \mathrm{ev}_j^*(\varphi_{i_j})
\coloneqq
\int_{[\overline{\mathcal{M}}^\mathsf{T}_{g,n}(Z,
\beta)]_{\mathsf{T}}^{\rm vir}} \frac{\prod_{j=1}^n \iota^*\mathrm{ev}_j^*(\varphi_{i_j})}{e_\mathsf{T}(N^{\rm vir})} \in \bbQ(\nu)\,,
\]
with $\iota:  \overline{\mathcal{M}}^\mathsf{T}_{g,n}(Z,
\beta) \hookrightarrow \overline{\mathcal{M}}_{g,n}(Z,
\beta)$ the immersion of the substack of $\mathsf{T}$-fixed points into the moduli space of stable maps, and $N^{\rm vir}$ its $\mathsf{T}$-equivariant normal bundle. 
Writing $x=\sum_{i=1}^{l+1} x_i \varphi_i \in \mathrm{H}_\mathsf{T}(Z)$, 
the genus-zero invariants of $Z$ define a Frobenius manifold satisfying the axioms {\bf FM1-FM5}:
\[\cM_{\rm GW}\coloneqq(\mathrm{H}_\mathsf{T}(Z), c_{\rm GW}, \eta_{\rm GW}, \varphi_{l+1})\,\]
via
\beq
c_{\rm GW}(\varphi, \psi, \theta) 
 \coloneqq  \intEquiv{\mathsf{T}}_{[Z]_{\mathsf{T}}}  \varphi \cup \psi \cup \theta + \sum_{\beta\in \mathrm{H}_2(Z, \bbZ)} \langle \varphi,\psi,\theta \rangle_{0,3,\beta}^{Z}\re^{\langle\beta, x\rangle}
 \,, 
\label{eq:cGW}
\eeq
\[
\eta_{\rm GW}(\varphi, \psi)  \coloneqq  c_{\rm GW}(\varphi_{l+1}, \varphi, \psi) = \intEquiv{\mathsf{T}}_{[Z]_{\mathsf{T}}} \varphi \cup \psi\nn\,.
\]
In \cite{MR1606165}, when the torus acts with characters $(t,t)$, the authors solved the genus zero $\mathsf{T}$-equivariant Gromov--Witten theory of $Z$ using degeneration arguments and the Aspinwall--Morrison formula for the super-rigid local curve.
The formal power series \eqref{eq:cGW} is the Fourier expansion of a trigonometric rational function given explicitly by
\[
c_{\rm GW}(\varphi_{l+1},\varphi_i,\varphi_j) = 
\,\eta_{\rm GW}(\varphi_i, \varphi_j)=
\begin{cases}
\frac{1}{\nu^2}\frac{1}{|\mathsf{G}|}\,, & i=j=l+1\,, \\
-C_{ij}\,, & i,j\leq l \,, \\
0\,, & \mathrm{else}\,,
\end{cases}
\]
\beq
c_{\rm GW}(\varphi_i,\varphi_j,\varphi_k) =-\nu\sum_{\beta\in \mathcal{R}^+}
\langle\alpha_i,\beta\rangle \langle\alpha_j,\beta\rangle \langle\alpha_k,\beta\rangle
\coth\l(\frac{\sum_{m=1}^l \langle\beta,\alpha_m\rangle x_m}{2} \r)\,, \quad i,j,k\leq l\,.
\label{eq:cBG}
\eeq
%
The corresponding prepotential is $F_{\rm GW}=F^0_{\rm GW}+F^+_{\rm GW}$, where the zero and positive degrees parts read
\begin{align}
    \label{GW_potential}
    F^0_{\rm GW}= & \frac{1}{6 \nu^2}\frac{1}{|\mathsf{G}|}(x_{l+1})^3 -\frac{1}{2}x_{l+1}\sum_{i,j=1}^l C_{ij} x_i x_j
    -\frac{\nu}{6}\sum_{\beta\in \mathcal{R}^+}\sum_{i,j,k=1}^{l}\langle\alpha_i,\beta\rangle\langle\alpha_j,\beta\rangle\langle\alpha_k,\beta\rangle x_i x_j x_k\,, \nonumber\\
    F^+_{\rm GW}= & 2\nu \sum_{\beta\in \mathcal{R}^+} \mathrm{Li}_3\l(\re^{-\langle\beta,h\rangle}\r) \,.
\end{align}
When $\RR = \mathrm{A}_l$, 
the same reasoning applied to the full two-dimensional $\mathsf{T}'$-action on $Z$ yields 
\cite{MR2510741,MR2861610}
\begin{align}
    F^0_{\rm GW}=& \frac{1}{6 \nu_1\nu_2}\frac{1}{|\mathsf{G}|}x_{l+1}^3 -\frac{1}{2}x_{l+1}\sum_{i,j=1}^l C_{ij} x_i x_j 
    - \frac{1}{6} \sum_{i,j,k,i',j',k'=1}^l 
        C_{ii'}C_{jj'}C_{kk'} \mathfrak{C}_{i'j'k'} x_i x_j x_k \,, \nn \\
    F^+_{\rm GW}=& (\nu_1 + \nu_2) \sum_{\beta\in \mathcal{R}^+} \mathrm{Li}_3\l(\re^{-\langle\beta,h\rangle}\r) \,.
    \label{GW_potential_typeA_fulltorus}
\end{align}
where the symmetric trilinear form $\mathfrak{C}_{ijk}$ is determined by its value at $i\leq j \leq k$ as
\[\mathfrak{C}_{ijk} = \frac{\left(j\nu_1 + (l+1-j) \nu_2\right) i(l+1-k)}{l+1}\,, \qquad i\leq j \leq k.\]

\subsubsection{Landau--Ginzburg mirrors from ADE spectral curves}
\label{sec:hurgen}

Let $N \in \bbZ_{>0}$ and  ${H}_{g,\mathsf{m}}$ be the moduli space 
of smooth genus $g$-covers of $\bbP^1$ with ramification profile at infinity specified by a partition~$\mathsf{m} \vdash N$.  We will write $\pi$, $\lambda$ and $\Sigma_i$ for, respectively, the universal
family, the universal map, and the sections marking $\{\infty_i\}_i \coloneqq \lambda^{-1}([1:0])$, as per the following commutative diagram:
\beq
\xymatrix{ C_g \ar[d]  \ar@{^{(}->}[r]& \cC_{g,\mathsf{m}}\ar[d]^\pi  \ar[r]^{\lambda}  &  \bbP^1 \\
                     [\lambda]  \ar@{^{(}->}[r]^{\rm pt}   \ar@/^1pc/[u]^{P_i}&                                             H_{g,\mathsf{m}}  \ar@/^1pc/[u]^{\Sigma_i}& 
}
\label{eq:hurdiag}
\eeq
We furthermore denote by $\rd=\rd_\pi$  the relative differential with respect to the
universal family and $p_i^{\rm cr}\in C_g\simeq \pi^{-1}([\lambda])$ the critical locus $\rd
\lambda =0$ of the universal map. By the Riemann existence theorem, the critical values of $\lambda$, \[(u_i)_{i = 1, \dots ,d_{g;\mathsf{m}}}\,,\] 
serve as local coordinates away from the discriminant locus 
    $u_i = u_j$ for $i \neq j$. 
On its complement, we can construct a family of semi-simple, commutative, $\bbC$-algebra structures on the tangent fibres at $(u_1, \dots , u_{d_{g,\mathsf{m}}})$ by stipulating that the coordinate vector fields in the $u$-chart are the idempotents of the algebra,
\begin{equation}
    \partial_{u_i} \cdot \partial_{u_j} = \delta_{ij}\partial_{u_i}.
\label{eq:prodss}
\end{equation}
Let $\mu : \cC_{g,\mathsf{m}} \to \bbP^1$ be a surjective morphism such that the
relative one-form \[\rd \log \mu \in \Omega^1_{\cC_{g,\mathsf{m}}/H_{g,\mathsf{m}}}(\infty_0+\dots +\infty_m)\] is an exact third-kind differential\footnote{This is a special case of a type III admissible differential, in the classification of \cite[Lect.~5]{Dubrovin:1994hc}.} on the fibres of the universal curve with simple poles at $\infty_i$, with residues \[\Res_{\infty_i} \rd\log\mu =a_i \in \bbZ\,.\] If $\mu$ does not factor through $\lambda$, this defines an Ehresmann connection on $TC_{g,\mathsf{m}}$ where points in nearby fibres of the universal curve are identified if they have the same image under $\mu$. This defines a meromorphic derivation 
\[
\delta_{\partial_{u_i}} ~ : ~  \mathrm{H}^0(\cC_{g,\mathsf{m}}, \cO_{\cC_{g,\mathsf{m}}})   \xrightarrow{\hspace*{.75cm}}    \mathrm{H}^0(\cC_{g,\mathsf{m}}, \cK_{\cC_{g,\mathsf{m}}})
\]
defined in local coordinates $p=(u_1, \dots , u_{d_{g;\mathsf{m}}}; \mu)$ for $C_{g,\mathsf{m}}$ as the partial derivative taken with respect to $u_i$ whilst keeping $\mu$ constant. 
A Frobenius manifold structure $\HH_{g,\mathsf{m}}^{[\mu]} \coloneqq (M_{\rm LG}, c_{\rm LG}, \eta_{\rm LG}, e)$ can be defined on the Hurwitz space $M_{\rm LG} \coloneqq H_{g,\mathsf{m}}$
by the residue formulas \cite{Dubrovin:1994hc}
\begin{align}
    \eta_{\rm LG}(X, Y) \coloneqq & \sum_{i} \underset{p_i^{\text{cr}}}{\text{Res}}\dfrac{\delta_X~\lambda~ \delta_Y\lambda}{\text{d}\lambda} \phi^2\,,
        \label{eq:etares}
        \\
    c_{\rm LG}(X, Y, Z) \coloneqq  & \sum_{i} \underset{p_i^{\text{cr}}}{\text{Res}}\dfrac{\delta_X\lambda~\delta_Y\lambda~\delta_Z\lambda}{ \text{d}\lambda}\phi^2\,,     \label{eq:cres}\\
    \eta^\flat_{\rm LG}(X, Y)  \coloneqq  & \sum_{i} \underset{p_i^{\text{cr}}}{\text{Res}}\dfrac{\delta_X \log \lambda~ \delta_Y \log \lambda}{\rd \log \lambda} \phi^2\,,
    \label{eq:gres}
\end{align}
where
\[
\phi \coloneqq \frac{\rd \mu}{\mu}\,.
\]
The universal map $\lambda$ and the relative differential $\phi$ are referred to as the {\it superpotential} and the {\it primitive form} of $\HH_{g,\mathsf{m}}$. \\



A general mirror symmetry construction of Frobenius submanifolds of Hurwitz spaces isomorphic to $\cM_{\rm AW}$, and the associated Dubrovin-dual Frobenius manifolds, was given in \cite{Brini:2021pix} (see also \cite{MR1606165,Dubrovin:2015wdx}), as we review here. For $(\RR,\widehat{\omega})$ be a marked ADE pair, let $\bar k \in \{1, \dots, l\}$ be such that $\omega_{\bar k} = \widehat{\omega}$. Fix $0 \neq \omega \in \Lambda_w^+$ a non-zero dominant weight. Starting from the characteristic polynomial of a regular element of $\cT$ in the representation $\rho_\omega$,
\beq
\cQ = \prod_{\omega'\in \cW(\omega)} \l(\re^{\bra\omega', x\ket }-\mu\r) \in \bbQ[Y_1, \dots , Y_l][\mu]\,,
\label{eq:weylrel}
\eeq
define 
\beq 
\cP\l(y_1, \dots , y_{l+1};\lambda,\mu\r) \coloneqq
 \cQ\l(Y_i=y_i\re^{-d_i y_{l+1}}-\delta_{i \bar k} \lambda \re^{-y_{l+1}};\mu \r)\,.
 \label{eq:shiftfun}
\eeq 
%
For fixed $y\in M^{\rm ss}_{\rm AW}$, \eqref{eq:shiftfun} defines a plane algebraic curve 
$C_y \coloneqq \bbV\l(\cP\r)\,.$
%
Let $\overline{C_y}$ denote the normalisation of the projective closure of the fibre at $y$, $g\coloneqq h^{1,0}\big(\overline{C_y}\big)$, and let $\mathsf{m}$ be the ramification profile over infinity of the 
Cartesian projection $\lambda :  \overline{C_y} \rightarrow \bbP^1$. The corresponding family is the pull-back of the universal curve to $M^{\rm ss}_{\rm AW}$, where the pull-back  metric, product, and intersection tensors are given by \eqref{eq:etares}--\eqref{eq:gres}.
 %
%
%
with $\{p_i^{\rm cr}\}_m$ the ramification points of $\lambda :
\overline{C_w} \longrightarrow \bbP^1$. 
For convenience and later comparison with $\cM_{\rm GW}$, it will be helpful to rescale the primitive differential and the linear coordinate on the second factor of $\mathfrak{h}^{\rm reg} \oplus \bbC$ as
\beq
\phi^2 \longrightarrow \frac{2\nu\dim_\bbC \mathfrak{g} }{\bra \omega, \omega+2 \mathsf{w}\ket \dim_\bbC \rho_\omega} \l(\frac{\rd \mu}{\mu}\r)^2\,, \qquad x_{l+1} \to \frac{1}{2\nu}\, x_{l+1}\,,
\label{eq:phiresc}
\eeq
where 
\beq 
\mathsf{w} \coloneqq \frac{1}{2}\sum_{\beta\in \RR^+}\beta = \sum_{i=1}^l \omega_i 
\label{eq:rho}
\eeq 
is the Weyl vector. By \eqref{eq:etares}--\eqref{eq:gres}, this just results in an overall rescaling of the prepotential for $\cM_{\rm LG}$.

\begin{thm}[Mirror symmetry for $\cM_{\rm AW}$]
For all marked pairs $(\RR, \widehat{\omega})$, the Landau--Ginzburg formulas \eqref{eq:etares}--\eqref{eq:gres} define a
semi-simple conformal Frobenius manifold \[\cM_{\rm
  LG}=(M_{\rm LG}, \eta_{\rm LG}, c_{\rm LG}, e,E)\,,\] with 
%
$e= y_{l+1}^{-1} \de_{\widehat{y}}$, $E= y_{l+1} \de_{y_{l+1}}$.
%
Furthermore,
%
\[\cM_{\rm LG} \simeq \cM_{\rm AW}\,.\]%
\label{thm:mirrorEAW}
\end{thm}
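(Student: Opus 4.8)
The plan is to deduce the isomorphism $\cM_{\rm LG}\simeq\cM_{\rm AW}$ from the uniqueness clause of the reconstruction theorem \cref{thm:EAW}. That result pins down $\cM_{\rm AW}$ as the \emph{unique} semi-simple conformal Frobenius manifold of charge $d=1$ whose Euler field, intersection form and prepotential obey \textbf{AW-I}, \textbf{AW-II} and \textbf{AW-III}. So the argument naturally splits into two parts: (a) showing that the residue formulas \eqref{eq:etares}--\eqref{eq:cres} equip $M_{\rm LG}$ with a semi-simple conformal Frobenius structure of charge $1$; and (b) verifying that this structure satisfies \textbf{AW-I}--\textbf{AW-III}. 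The isomorphism is then immediate.

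For (a), I would invoke Dubrovin's general theory of Frobenius structures on Hurwitz spaces \cite{Dubrovin:1994hc}: for an admissible pair given by a superpotential $\lambda$ and a primitive differential $\phi$, the formulas \eqref{eq:etares}--\eqref{eq:cres} produce a conformal Frobenius manifold, with flatness of $\eta_{\rm LG}$ and the WDVV equations built in. Here $\phi=\rd\mu/\mu$ is an exact third-kind differential, hence admissible, and $\lambda$ is the universal map of the family $C_y=\bbV\l(\cP\r)$ from \eqref{eq:shiftfun}. Semi-simplicity follows from \eqref{eq:prodss}: away from the discriminant the critical values of $\lambda$ are distinct and serve as canonical coordinates diagonalising $c_{\rm LG}$. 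The Euler and unit fields, and the charge, are then read off from the two $\bbC^\times$-symmetries of the spectral curve, namely the rescaling of $\lambda$ and the translation of $\log\lambda$. After the normalisation \eqref{eq:phiresc} these yield the stated fields $E=y_{l+1}\de_{y_{l+1}}$ and $e=y_{l+1}^{-1}\de_{\widehat y}$ and fix $d=1$; equivalently they give $\widehat d\, E=\de_{x_{l+1}}$, which is \textbf{AW-I}, using $y_{l+1}=x_{l+1}$.

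The main computational content is \textbf{AW-II}: the intersection form \eqref{eq:gres} must coincide with the pullback $\tilde{\sigma}_i^*\xi$ of the Cartan--Killing form \eqref{eq:inter_form}. I would evaluate \eqref{eq:gres} by residues at the punctures in $\lambda^{-1}(\infty)$. Since $\cP$ is assembled from the characteristic polynomial $\cQ$ of a regular torus element in $\rho_\omega$ via \eqref{eq:weylrel}--\eqref{eq:shiftfun}, over these punctures $\mu$ acquires the asymptotics $\log\mu\sim\langle\omega',x\rangle$ indexed by the weights $\omega'\in\cW(\omega)$, so that $\delta_X\log\lambda$ and $\log\mu$ become linear in the coordinates $x$ on $\mathfrak{h}\oplus\bbC$. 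The residue sum then collapses to a weight sum $\sum_{\omega'\in\cW(\omega)}\langle\omega',\cdot\rangle\langle\omega',\cdot\rangle$; this is a $\cW$-invariant symmetric bilinear form, and since the reflection representation of an irreducible root system is irreducible it must be a scalar multiple of $\langle\cdot,\cdot\rangle$. Matching the constant against \eqref{eq:inter_form} shows that the linear $x$ are flat coordinates for $\eta^\flat_{\rm LG}$ with Gram matrix $-C_{ab}$, $\widehat d$, which is precisely \textbf{AW-II}.

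The remaining, and I expect hardest, step is \textbf{AW-III}: polynomiality of the prepotential in $t_1,\dots,t_{l+1},\re^{t_{l+1}}$. Here one must pass from the intersection-form coordinates $x$ to flat coordinates $t_A$ for the Saito metric $\eta_{\rm LG}$ and show the resulting prepotential lies in $\bbC[t_1,\dots,t_{l+1},\re^{t_{l+1}}]$. The natural route is to identify the $t_A$ with a triangular transform of the basic invariants $y_\alpha$ of \cref{prop:Aring}, whose ring is $\bbC\l[y_1,\dots,y_l,\re^{y_{l+1}}\r]$, so that the generator $\re^{t_{l+1}}$ mirrors $\re^{y_{l+1}}$ and the regularity of the residue integrals in these variables forces polynomiality. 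Establishing this \emph{global} algebraicity of the $t$-dependence, rather than mere local holomorphy, is the principal obstacle: it requires uniform control of the Saito flat coordinates and the prepotential over all of $M_{\rm LG}$ and across every Dynkin type. Once \textbf{AW-I}--\textbf{AW-III} are in place, \cref{thm:EAW} delivers $\cM_{\rm LG}\simeq\cM_{\rm AW}$.
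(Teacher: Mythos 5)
First, a point of comparison: the paper does not actually prove \cref{thm:mirrorEAW} --- it is imported wholesale from \cite{Brini:2021pix} (``The isomorphism in the top row \dots was proved in \cite{Brini:2021pix}''), so there is no internal proof to measure yours against. That said, your overall strategy --- equip $M_{\rm LG}$ with a semi-simple conformal Frobenius structure via the residue formulas and then invoke the uniqueness clause of \cref{thm:EAW} after checking \textbf{AW-I}--\textbf{AW-III} --- is the right one, and it is the route taken in the cited reference. Your treatment of \textbf{AW-II}, collapsing the residue sum \eqref{eq:gres} onto the fibre over $\lambda=\infty$ so that it becomes a Weyl-invariant weight sum proportional to the Killing form by irreducibility of the reflection representation, is essentially correct, though you should still pin down the $(l+1,l+1)$ entry, the vanishing of the cross terms, and the overall normalisation (which is what the rescaling \eqref{eq:phiresc} is calibrated for).

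There are, however, two genuine gaps. First, for $\RR=\mathrm{D}_l$ and $\RR=\mathrm{E}_l$ the family $C_y=\bbV(\cP)$ of \eqref{eq:shiftfun} sweeps out a \emph{proper} submanifold of the ambient Hurwitz space $H_{g,\mathsf{m}}$: its base has dimension $l+1$, while the spectral curves have positive genus and many sheets, so $\dim H_{g,\mathsf{m}}$ is much larger. You therefore cannot simply ``invoke Dubrovin's general theory of Frobenius structures on Hurwitz spaces'' \cite{Dubrovin:1994hc}; the restriction of \eqref{eq:etares}--\eqref{eq:cres} to the family yields a unital, associative product only if the tangent spaces of the family are closed under the ambient $\circ$-product and contain $e$ and $E$ (i.e.\ the family is a Frobenius submanifold). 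Establishing this closure is a nontrivial Lie-theoretic statement and is where much of the actual work lies. Second, and decisively, \textbf{AW-III} is not established: you correctly flag polynomiality of the prepotential in $t_1,\dots,t_{l+1},\re^{t_{l+1}}$ as ``the principal obstacle,'' but offer only the expectation that the Saito flat coordinates are triangular in the basic invariants $y_\alpha$ of \cref{prop:Aring}. Since \cref{thm:EAW} asserts uniqueness only among Frobenius manifolds satisfying \emph{all three} of \textbf{AW-I}--\textbf{AW-III}, leaving \textbf{AW-III} open means the final step --- concluding $\cM_{\rm LG}\simeq\cM_{\rm AW}$ --- does not go through. As written, the proposal is a sound plan whose two hardest steps remain unproved.
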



The next Proposition \cite{MR2070050,MR2287835} shows that, for $\cM\hookrightarrow \HH_{g,\mathsf{m}}$ a Frobenius submanifold of a Hurwitz space, the Dubrovin-dual Frobenius manifold $\cM^\flat$ is 
obtained by replacing the superpotential by its logarithm 
\[\lambda \longrightarrow \log \lambda\,.\]
\begin{prop}
Let $\iota: \cM = (M,c,\eta,e, E) \hookrightarrow \HH_{g,\mathsf{m}}$ be a semi-simple conformal Frobenius submanifold of a Hurwitz space defined by a Landau--Ginzburg pair $(\tilde{\lambda}, \tilde{\phi})$. Let $\cM^{\rm log}$ be the Frobenius submanifold structure on $M^{\rm ss}$ defined by the formulas 
\eqref{eq:etares}--\eqref{eq:cres} with $\lambda=\log \tilde{\lambda}$, $\phi=\tilde{\phi}$, and $X,Y, Z \in \iota_*(\cX(M))$. Then,
\[
\cM^{\rm log} \simeq \cM^\flat\,.
\]
\label{prop:duallog}
\end{prop}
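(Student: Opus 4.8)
The plan is to exhibit the identity map on $M^{\rm ss}$ as the sought isomorphism, by verifying that the residue formulas \eqref{eq:etares}--\eqref{eq:cres} evaluated at $\lambda=\log\tilde\lambda$, $\phi=\tilde\phi$ reproduce verbatim the Dubrovin-dual metric and product \eqref{eq:dualmetric}--\eqref{eq:dualprod}. Since $\cM$, and hence $\cM^\flat$ by \cref{prop:dd}, is semi-simple, the entire Frobenius structure is encoded in canonical coordinates, so the argument reduces to an elementary residue manipulation together with the standard dictionary between the Hurwitz data and those coordinates.

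First I would record the chain-rule identities $\delta_X\log\tilde\lambda=\delta_X\tilde\lambda/\tilde\lambda$ and $\rd\log\tilde\lambda=\rd\tilde\lambda/\tilde\lambda$, which are legitimate because $\delta_X$ is the $\mu$-horizontal derivation determined by $\phi=\rd\mu/\mu$, and $\mu$ is \emph{unchanged} under $\tilde\lambda\mapsto\log\tilde\lambda$. Substituting into \eqref{eq:etares}--\eqref{eq:cres}, the powers of $\tilde\lambda$ telescope and give
\[
\eta^{\rm log}(X,Y) = \sum_i \Res_{p_i^{\rm cr}} \frac{1}{\tilde\lambda}\frac{\delta_X\tilde\lambda\,\delta_Y\tilde\lambda}{\rd\tilde\lambda}\tilde\phi^2\,,
\qquad
c^{\rm log}(X,Y,Z) = \sum_i \Res_{p_i^{\rm cr}} \frac{1}{\tilde\lambda^2}\frac{\delta_X\tilde\lambda\,\delta_Y\tilde\lambda\,\delta_Z\tilde\lambda}{\rd\tilde\lambda}\tilde\phi^2\,,
\]
where the residues are now at the critical points of $\tilde\lambda$ itself. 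On $M^{\rm ss}$, where $E$ is invertible in the $\circ$-algebra, the critical values $u_i=\tilde\lambda(p_i^{\rm cr})$ are nonzero and finite, so the critical loci of $\tilde\lambda$ and $\log\tilde\lambda$ coincide and no spurious contributions arise from the zeros or poles of $\tilde\lambda$. (Note that $\eta^{\rm log}$ is in fact formula \eqref{eq:gres} read verbatim, so the metric identification can alternatively be obtained by recognising \eqref{eq:gres} as the abstract intersection form \eqref{eq:dualmetric}.)

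Second I would invoke the standard Hurwitz--Frobenius dictionary: the critical values $u_i=\tilde\lambda(p_i^{\rm cr})$ are canonical coordinates, the $\partial_{u_i}$ are the $\circ$-idempotents, and the Euler field is the scaling field $E=\sum_i u_i\,\partial_{u_i}$, so $E^{-1}=\sum_i u_i^{-1}\partial_{u_i}$ in the $\circ$-algebra. Since $\tilde\lambda(p_i^{\rm cr})=u_i$, inserting the weight $\tilde\lambda^{-1}$ (resp.\ $\tilde\lambda^{-2}$) inside the residue at $p_i^{\rm cr}$ scales the $i$-th summand by $u_i^{-1}$ (resp.\ $u_i^{-2}$); comparing with the original formulas for $\eta$ and $c$, this is exactly pre-composition with $E^{-1}\circ$ (resp.\ $E^{-1}\circ E^{-1}\circ$). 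Hence $\eta^{\rm log}(X,Y)=\eta(E^{-1}\circ X,Y)$ and $c^{\rm log}(X,Y,Z)=\eta(E^{-1}\circ X,\,E^{-1}\circ Y\circ Z)$, which are precisely $\eta^\flat$ of \eqref{eq:dualmetric} and $c^\flat(X,Y,Z)=\eta^\flat(X,Y\star Z)$ of \eqref{eq:dualprod}. The units match too: in its own canonical coordinates $\log u_i$ the structure $\cM^{\rm log}$ has idempotents $\partial_{\log u_i}=u_i\partial_{u_i}$, whose sum is $\sum_i u_i\partial_{u_i}=E$, the flat unit of $\cM^\flat$ from \cref{prop:dd}. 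Therefore the identity on $M^{\rm ss}$ is an isomorphism $\cM^{\rm log}\simeq\cM^\flat$.

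I expect the \textbf{main obstacle} to be the rigorous justification of the localisation step, namely that the meromorphic weight $\tilde\lambda^{-1}$ may be replaced summand-by-summand by the constants $u_i^{-1}=\tilde\lambda(p_i^{\rm cr})^{-1}$ inside each residue, and that this is genuinely $\circ$-multiplication by $E^{-1}$. This leans on the precise statement that canonical coordinates are the critical values and that $E$ is the scaling field, which must be quoted with care, and on controlling the behaviour near the zeros and poles of $\tilde\lambda$: one has to confirm that restricting to $M^{\rm ss}$ (equivalently to the $\circ$-invertibility locus of $E$, where all $u_i\neq 0$) removes the logarithmic singularities of $\log\tilde\lambda$ from the critical set, so that $\cM^{\rm log}$ is a bona fide Frobenius submanifold and the two residue sums coincide without boundary terms.
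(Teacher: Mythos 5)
Your proof is correct. The paper offers no proof of \cref{prop:duallog} itself --- it is quoted from the cited references on almost duality --- but your argument (the chain rule $\delta_X\log\tilde\lambda=\delta_X\tilde\lambda/\tilde\lambda$ with $\mu$ and $\tilde\phi$ unchanged, followed by localisation of the simple-pole residues at $p_i^{\rm cr}$ to the critical values $u_i$, and the identification of the weights $\tilde\lambda^{-1}$, $\tilde\lambda^{-2}$ with $\circ$-multiplication by $E^{-1}$ via $E=\sum_i u_i\partial_{u_i}$ and $\partial_{u_i}\circ\partial_{u_j}=\delta_{ij}\partial_{u_i}$) is exactly the standard proof in those references. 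The step you flag as the main obstacle is indeed the crux, and you resolve it correctly: because $\rd\tilde\lambda$ has a simple zero at each $p_i^{\rm cr}$, the integrand has only a simple pole there, so the holomorphic factor $\tilde\lambda^{-k}$ may be replaced by its value $u_i^{-k}$, and restricting to the locus where $E$ is $\circ$-invertible (all $u_i\neq 0$) keeps the critical sets of $\tilde\lambda$ and $\log\tilde\lambda$ equal and rules out extra contributions.
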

\vspace{-.75cm}
From \cref{prop:duallog}, 
the Frobenius structure of 
$\cM_{\rm LG}^\flat \simeq \cM_{\rm AW}^\flat$ is given as
\beq
\label{eq:todalogeta}
\eta^\flat_{\rm AW}(X, Y) =  
\eta^\flat_{\rm LG}(X, Y) =   \sum_{m}\underset{p_m^{\text{cr}}}{\text{Res}}\frac{\delta_{X}\lambda~
  \delta_{Y}\lambda}{\lambda \rd \lambda}\phi^2\,,  
\eeq
\beq
c^\flat_{\rm AW}(X, Y, Z) =  
c^\flat_{\rm LG}(X, Y, Z) =
\sum_{m}\underset{p_m^{\text{cr}}}{\text{Res}}\frac{\delta_{X}\lambda~
  \delta_{Y}\lambda \delta_{Z}\lambda}{\lambda^2\rd \lambda}\phi^2\,.
\label{eq:todalogc}
\eeq

\section{Dubrovin duality and mirror symmetry} \label{DubDuality}

In this section we will state and prove our two main theorems relating the Dubrovin-dual Frobenius structures of $\cM_{\rm AW}$ and $\cM_{\rm LG}$ to the $\mathsf{T}$-equivariant quantum cohomology of $\mathrm{ADE}$ resolutions. 

\begin{thm}[Dubrovin duality for $\cM_{\rm AW}$ and $\cM_{\rm GW}$]
For all $(\RR, \widehat{\omega})$, we have
%
$\cM^\flat_{\rm AW} \simeq \cM_{\rm GW}$.
\label{thm:ddQH}
\end{thm}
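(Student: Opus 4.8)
The plan is to exploit the isomorphism $\cM_{\rm LG}\simeq\cM_{\rm AW}$ of \cref{thm:mirrorEAW} to reduce the statement to a comparison between the Gromov--Witten Frobenius manifold and the Dubrovin dual of the Landau--Ginzburg model. Since $\cM_{\rm AW}$ and $\cM_{\rm LG}$ are both semi-simple conformal of charge $d=1$, \cref{prop:dd} produces their duals, and an isomorphism of conformal Frobenius manifolds intertwines the construction \eqref{eq:dualmetric}--\eqref{eq:dualprod}; hence $\cM^\flat_{\rm AW}\simeq\cM^\flat_{\rm LG}$, and it suffices to establish $\cM_{\rm GW}\simeq\cM^\flat_{\rm LG}$. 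By \cref{prop:duallog} the dual structure $\cM^\flat_{\rm LG}$ is computed by the residue formulas \eqref{eq:todalogeta}--\eqref{eq:todalogc}, that is, by replacing the superpotential $\lambda$ of the relativistic Toda spectral curve \eqref{eq:shiftfun} with $\log\lambda$. A first consistency check is visible already at the level of metrics: the intersection form \eqref{eq:inter_form}, which by \textbf{AW-II} is the flat metric of $\cM^\flat_{\rm AW}$, has Gram matrix $-C_{ij}$ on the root directions, matching $\eta_{\rm GW}(\varphi_i,\varphi_j)$ there and, after the rescalings of \eqref{eq:phiresc}, on the remaining direction as well; likewise the flat unit $E$ of $\cM^\flat_{\rm AW}$ is $\widehat{d}^{-1}\partial_{x_{l+1}}$ by \textbf{AW-I}, matching the Gromov--Witten identity $\varphi_{l+1}$. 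The content of the theorem is therefore the identification of the products $c^\flat_{\rm AW}=c_{\rm GW}$.

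For the classical series $\RR=\mathrm{A}_l$ and $\RR=\mathrm{D}_l$ I would proceed by direct computation. Here the projective closure of the curve cut out by \eqref{eq:shiftfun} has genus zero, so $\overline{C_y}$ embeds as a Frobenius submanifold of a genus-zero Hurwitz space in which $\lambda$ and the primitive form $\phi$ admit explicit rational parametrisations. Feeding these into \eqref{eq:todalogc} and carrying out the residue calculus yields closed trigonometric expressions for the dual structure constants $c^\flat_{\rm LG}$. After the normalisation \eqref{eq:phiresc}, and in the flat chart for $\eta^\flat$ supplied by the coordinates $(x_1,\dots,x_{l+1})$, these can be matched against the explicit $\coth$-series \eqref{eq:cBG} for $c_{\rm GW}$; equality of the two $(2,1)$-tensors, together with the already-established match of metrics and units, gives $\cM_{\rm GW}\simeq\cM^\flat_{\rm LG}\simeq\cM^\flat_{\rm AW}$.

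For the exceptional series $\RR=\mathrm{E}_l$ the spectral curve \eqref{eq:shiftfun} has high genus and the residue calculus of \eqref{eq:todalogc} is no longer directly tractable, so I would instead argue representation-theoretically and bypass $\cM_{\rm LG}$ altogether. Expressed in a chart of basic invariants, the components of both $c^\flat_{\rm AW}$ and $c_{\rm GW}$ are quasi-homogeneous of fixed weight for the grading induced by the Euler field, and one first shows that they lie in a common, finite-dimensional space of $\widetilde{\cW}$-invariant polynomials in the exponential invariants $Y_i$. Consequently each $(2,1)$-tensor is pinned down by its values on a finite set of initial conditions $\mathfrak{I}\subset M^{\rm ss}_{\rm AW}$, and the theorem reduces to the finite verification $c^\flat_{\rm AW}|_{\mathfrak{I}}=c_{\rm GW}|_{\mathfrak{I}}$: the right-hand side is read off from \eqref{eq:cBG}, while the left-hand side is a pointwise linear-algebra computation from \eqref{eq:dualprod} applied to the known extended affine Weyl product $c_{\rm AW}$ and Euler field $E$.

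The main obstacle is precisely this exceptional case: proving that the structure constants occupy a finite-dimensional polynomial space of controlled dimension, and exhibiting a set $\mathfrak{I}$ small enough to be checked explicitly yet large enough to determine the tensor, is where the real work lies. In the classical case the analogous difficulty is milder but still present, and amounts to the careful bookkeeping of flat frames and of the normalisation \eqref{eq:phiresc} needed to align the Hurwitz-space residue output with the Gromov--Witten $\coth$-expansion.
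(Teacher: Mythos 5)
Your proposal follows essentially the same route as the paper: prove the mirror statement $\cM_{\rm GW}\simeq\cM_{\rm LG}^\flat$ first for $\mathrm{A}_l$ and $\mathrm{D}_l$ by explicit residue calculus on the rational spectral curve (deducing the duality via $\cM_{\rm LG}\simeq\cM_{\rm AW}$), and for $\mathrm{E}_l$ prove the duality directly by showing the relevant $(2,1)$-tensors lie in a finite-dimensional space of quasi-homogeneous invariant polynomials and checking equality on a set of initial conditions. The only detail you elide, which the paper must address, is that the Gromov--Witten side \emph{a priori} has poles along the discriminant $\delta=0$, so one must separately prove regularity there (the paper's \cref{lem: regular} and \cref{prop:lGWA}) before the finite-dimensionality argument applies.
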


\begin{thm}[Mirror symmetry for $\cM_{\rm GW}$]
For all $(\RR, \widehat{\omega})$, we have
%
$\cM^\flat_{\rm LG} \simeq \cM_{\rm GW}$.
\label{thm:msQH}
\end{thm}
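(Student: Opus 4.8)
The plan is to prove \cref{thm:msQH} by first observing that it is logically interchangeable with \cref{thm:ddQH}, and then by a direct residue computation for the classical series while deducing it from \cref{thm:ddQH} for the exceptional series. Since \cref{thm:mirrorEAW} furnishes an isomorphism $\cM_{\rm LG}\simeq\cM_{\rm AW}$ of semi-simple conformal Frobenius manifolds of charge $d=1$, and since Dubrovin duality \eqref{eq:adintro} is manifestly functorial under such isomorphisms (an isomorphism intertwines $\circ$, $\eta$ and $E$, hence $\eta^\flat$ and $c^\flat$ by \eqref{eq:dualmetric}--\eqref{eq:dualprod}), one has $\cM^\flat_{\rm LG}\simeq\cM^\flat_{\rm AW}$. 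It therefore suffices to establish $\cM^\flat_{\rm LG}\simeq\cM_{\rm GW}$ directly for $\RR=\mathrm{A}_l,\mathrm{D}_l$, and to invoke \cref{thm:ddQH} together with this isomorphism for $\RR=\mathrm{E}_l$.

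For the classical series, first I would compute the structure constants of $\cM^\flat_{\rm LG}$ from the log-superpotential residue formulas \eqref{eq:todalogeta}--\eqref{eq:todalogc}, applied to the relativistic Toda spectral curve $C_y=\bbV(\cP)$ of \eqref{eq:shiftfun}. For the marked weight $\widehat{\omega}$ dictated by \cref{fig:dynkin}, the genus $g=h^{1,0}\big(\overline{C_y}\big)$ vanishes, so $C_y$ embeds as a Frobenius submanifold of a \emph{rational} Hurwitz space. On a rational curve the residue sum $\sum_m\Res_{p_m^{\rm cr}}$ over the critical points of $\lambda$ can be traded, via the residue theorem, for minus the sum of residues of the same integrand at its remaining poles: the sheets $\infty_i$ over $\lambda=\infty$, and the zero locus $\lambda=0$ introduced by the $\log\lambda$ and $\lambda^{-2}$ factors in \eqref{eq:todalogeta}--\eqref{eq:todalogc}. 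These latter residues are computable in closed form by partial fractions in the exponentiated root coordinates $\re^{\bra\alpha_m,h\ket}$.

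Next I would match the resulting trigonometric rational functions against the Gromov--Witten structure constants \eqref{eq:cBG}, working in the flat chart $(x_1,\dots,x_{l+1})$ for the intersection form. The metric side is immediate: the dual pairing \eqref{eq:inter_form} of $\cM_{\rm AW}$ equals $-C_{ab}$ on the root block and reproduces $\eta_{\rm GW}$ under the rescalings \eqref{eq:phiresc} and \eqref{eq:ZxCtoZ}. For the product, the expansion of $1/\lambda$ about $\lambda=0$ on the spectral curve produces precisely the factor $\coth\big(\tfrac{1}{2}\sum_m\bra\beta,\alpha_m\ket x_m\big)$ attached to each positive root $\beta\in\RR^+$, weighted by $\bra\alpha_i,\beta\ket\bra\alpha_j,\beta\ket\bra\alpha_k,\beta\ket$; comparing the coefficients of the Fourier modes $\re^{-\bra\beta,h\ket}$ against \eqref{eq:cBG} closes the identification, whence $\cM^\flat_{\rm LG}\simeq\cM_{\rm GW}$ for $\RR=\mathrm{A}_l,\mathrm{D}_l$.

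The hardest step is the $\mathrm{D}_l$ residue computation: unlike the toric $\mathrm{A}_l$ case, where a Landau--Ginzburg mirror is already available from equivariant toric mirror symmetry, the characteristic polynomial \eqref{eq:weylrel} in the relevant fundamental representation yields a higher-degree plane curve whose ramification profile $\mathsf{m}$ and critical locus require genuinely new bookkeeping before the global residue rearrangement can be carried out. For $\RR=\mathrm{E}_l$ the difficulty is of a different nature and is deferred to \cref{thm:ddQH}: there one must show that both sides of $\cM^\flat_{\rm AW}\simeq\cM_{\rm GW}$ have structure constants lying in a fixed finite-dimensional space of quasi-homogeneous Weyl-invariants, so that equality on a finite set of initial conditions $\mathfrak{I}$ suffices; granting that, $\cM^\flat_{\rm LG}\simeq\cM^\flat_{\rm AW}\simeq\cM_{\rm GW}$ follows at once.
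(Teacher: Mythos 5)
Your proposal is correct and follows essentially the same route as the paper: the logical equivalence of \cref{thm:msQH} and \cref{thm:ddQH} via \cref{thm:mirrorEAW}, a direct contour-deformation residue computation on the rational spectral curves for $\RR=\mathrm{A}_l,\mathrm{D}_l$ (where the key point, which you invoke via rationality, is that $\deg_\lambda\cP=1$ makes the $\mu$-projection unramified so no residues at critical points of $\mu$ contribute), and the reduction to a finite set of initial conditions for $\RR=\mathrm{E}_l$. The only cosmetic difference is that the paper's $\mathrm{D}_l$ matching is carried out in the $\tau_i=\log\kappa_i$ chart against the $\mathrm{Li}_3$ form of $F_{\rm GW}$ rather than directly against the $\coth$ presentation of \eqref{eq:cBG}, which is an equivalent bookkeeping choice.
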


\begin{rmk}
In \cref{thm:ddQH}, when comparing the Frobenius structures on $\cM_{\rm AW}^\flat$ and $\cM_{\rm GW}$, we shall need to formally set $\nu=1$: by the Degree Axiom, the dependence on $\nu$ is reinstated on the Gromov--Witten side (or introduced on the extended affine Weyl side) upon rescaling 
\[ F_{\rm GW} \to \nu F_{\rm GW}\,, \quad F^\flat_{\rm AW} \to \nu F^\flat_{\rm AW}\,, \quad x_{l+1} \to \frac{x_{l+1}}{\nu}\,.\]
\label{rmk:nu_dep}
\end{rmk}
\vspace{-.5cm}
In the following we will describe how \cref{thm:msQH} for rational spectral curves translates into a comparison statement between the Gromov--Witten calculation in \eqref{GW_potential} and \eqref{GW_potential_typeA_fulltorus} on one hand, and an explicit summation of residues which localise to the fibre over zero of the $\lambda-$projection on the other: we will compute this in closed-form for $\RR=\mathrm{A}_l$ and $\RR=\mathrm{D}_l$, thereby proving \cref{thm:msQH}, and therefore \cref{thm:ddQH}, for the classical series. We also explain in general how to reduce \cref{thm:ddQH} to a comparison statement of the structure constants for the Frobenius algebras restricted to a finite set of points on the complement of the discriminant: computing this explicitly for $\RR=\mathrm{E}_l$ will provide a proof of \cref{thm:ddQH}, and therefore \cref{thm:msQH}, for the three exceptional cases.

\subsection{Landau--Ginzburg mirror symmetry}
\label{sec:LGsum}
We start by looking at the argument of the residues in \eqref{eq:todalogeta},
\beq 
\Upsilon_{i,j,k}(p) \coloneqq
\frac{\delta^{(\mu)}_{\de_{x_i}} \lambda ~ \delta^{(\mu)}_{\de_{x_j}} \lambda \delta^{(\mu)}_{\de_{x_k}} \lambda}{\lambda^2 \mu^2  \de_\mu \lambda}\rd\mu(p),
\label{eq:ups}
\eeq
so that
\beq 
\eta^\flat_{\rm LG}(\de_{x_i}, \de_{x_j}) = \sum_{m}\underset{p=p^{\rm cr}_m}{\Res}\Upsilon_{i,j,l+1}(p)\,, \quad 
c^\flat_{\rm LG}(\de_{x_i}, \de_{x_j}, \de_{x_k}) = \sum_{m}\underset{p=p^{\rm cr}_m}{\Res}\Upsilon_{i,j,k}(p)\,.
\label{eq:gammaresups}
\eeq 
From \eqref{eq:ups}, we deduce that the pole structure of $\Upsilon_{i,j,k}(p)$ is as follows:
\ben[(i)]
\item it has at most simple poles at the critical points $\{p_l^{\rm cr}\}$, for which $\rd\lambda(p_i^{\rm cr})=0$;
\item it has a pole of order at most \[\max \{2-\delta_{i,l+1}-\delta_{j,l+1} -\delta_{k,l+1},0\}\] at $\lambda(p)=0$: this follows from \[ \delta^{(\mu)}_{\de_{x_{l+1}}} \lambda \propto \lambda\,,\] 
which offsets a linear power in the vanishing of the denominator;
\item it has at most simple poles at $\mu(p)=0$ (when $\lambda(p)=\infty$) when $i=j=k=l+1$;
\item it has a pole of order at most 
\[ \max\{1-\delta_{i,l+1}-\delta_{j,l+1} -\delta_{k,l+1},0\}\]
at the critical points $\{q^{\rm cr}_m\}$ of the $\mu$-projection, $\rd\mu(q^{\rm cr}_m)=0$. These are the loci where the Ehresmann connection induced by the $\mu$-foliation is singular and $\delta^{(\mu)}_{\de_{x_i}} \lambda$ possibly develops a pole. These singularities are partially offset by a vanishing of the same order of $\rd \mu/{\de_\mu \lambda}$.
\een  
The residue sums \eqref{eq:gammaresups} pick up the contributions from the residues of type (i) alone: the difficulty in writing the critical points of the superpotential as algebraic functions of $(x_1, \dots, x_l)$ makes, however, their individual comuputation unwieldy. To overcome this problem, we turn the contour around and equate the sum of residues at the critical points in \eqref{eq:gres} to a much more manageable sum of residues at poles and zeros of $\mu$ and $\lambda$ (type (ii) and (iii)) as well as a contribution from residues at critical points of $\mu$ (type (iv)). 

Computing the individual contributions from type (iv) residues is as difficult, if not more, than computing those arising from the critical points of $\lambda$. However, there are two scenarios when they can be shown to vanish identically. 

\begin{itemize}
    \item If any of $i$, $j$ or $k$ is equal to $l+1$, 
we have
\[
\mathrm{ord}_{q_m^{\rm cr}} \Upsilon_{i,j,k} = 
\max\{1-\delta_{i,l+1}-\delta_{j,l+1} -\delta_{k,l+1},0\}=0\,
\]  
hence there is no pole at the ramification points of the $\mu$-projection. The sum was calculated explicitly for all extended affine Weyl Frobenius manifolds with canonically marked node in \cite[Thm.~3.5]{Brini:2021pix}, showing that 
\[
\eta^\flat_{\rm LG}(\de_{x_i}, \de_{x_j}) = \sum_{m}\underset{p=p^{\rm cr}_m}{\Res}\Upsilon_{i,j,l+1}(p) = \eta_{\rm GW}(\de_{x_i}, \de_{x_j})\,
\]
as expected.
    \item If $\deg_\lambda \cP = \deg_{\widehat{Y}} \cQ =1$, implying that the spectral curve is rational, we have $\{q_m^{\rm cr}\} = \emptyset$, so once again the sum over residues only picks up contributions from zeros and poles of $\mu$ and $\lambda$. For $i,j,k \neq l+1$, the sole contribution arises from the zeroes of the rational function $\lambda(\mu)$.  
\end{itemize}

When $\RR=\mathrm{A}_l$ or $\RR=\mathrm{D}_l$, it was shown in \cite{Brini:2021pix} that $\deg_\lambda \cP=1$, and for
these two cases the summation over residues can be performed explicitly\footnote{Using the superpotential derived in \cite{Dubrovin:2015wdx} the almost dual prepotentials for the $BCD$-cases with arbitrary marked node may easily be calculated. These are of the same form, corresponding to a $BC_l$-root system.}.

\subsubsection{Proof of \cref{thm:msQH} for the $\mathrm{A}_l$ series with arbitrary marked weight}

Let $\omega$ be the highest weight of the defining representation $\rho$ of $\mathfrak{g}=\mathrm{sl}_{l+1}(\bbC)$,  and let  $1\leq \bar{k} \leq l$ be a choice of marked node.  From \eqref{eq:shiftfun}, the corresponding superpotential reads

\beq
    \lambda 
    = \frac{y_{l+1} \prod_{\omega' \in \cW(\omega)}
    (\re^{\bra \omega', h\ket}-\mu)}{(-\mu)^{l+1-\bar{k}}}
    = (-1)^{l+1 - \bar{k}} y_{l+1} \prod_{j=1}^{l}\kappa_j^{-\frac{l+1 - \bar{k}}{l+1}}\frac{(1-q)\prod_{k=1}^l(1-\kappa_k q)}{q^{l+1-\bar{k}}},
    \label{eq:lambdaA}
\eeq
where 
\[
\kappa_j \coloneqq \prod_{i=j}^l \re^{- p_i}\, , \, 1\leq j \leq l\,, \quad q \coloneqq \re^{-\langle\omega_{\rm min},h\rangle}\mu\,, \quad 
\omega_{\rm min} \coloneqq -\frac{1}{l+1}\sum_{i=1}^{l}i\alpha_i\,.
\]
The structure constants of the Frobenius manifold structure on $\cM_{\rm LG}^\flat$ were computed 
using the method described in \cref{sec:LGsum} in \cite{MR2287835} and, in a slightly generalised fashion, in \cite[Thm.~5.4]{Brini:2013zsa}, where more generally
the three-point functions of the full $\mathsf{T}'$-equivariant Gromov--Witten theory of $Z$ were seen to equate\footnote{More precisely, the comparison in \cite{Brini:2013zsa} was performed for $Z\times \mathbb{C}$ with a 2-torus action acting with opposite weights on the canonical bundle of the two factors. The corresponding genus-zero GW potential differs from that of $Z$ by by an overall weight factor of $-(\nu_1 + \nu_2)$.} those arising from the Landau--Ginzburg pair $(\log\lambda, \phi)$, with
\[
\lambda =\re^{\frac{x_{l+1}}{\nu_1+\nu_2}}\prod_{j=1}^{l}\kappa_j^{-\frac{\nu_1}{\nu_1 + \nu_2}}\frac{(1-q)\prod_{k=1}^l(1-\kappa_k q)}{q^{(l+1)\frac{\nu_1}{\nu_1 + \nu_2}}}\,, \quad
\phi^2 = (\nu_1 + \nu_2)\l(\frac{\rd q}{q}\r)^2.
\]
Taking the restriction to the one-dimensional subtorus $\mathsf{T}$ that fixes the irreducible component $E_{\overline{k}}$ as in \eqref{eq:ZxCtoZ},
the superpotential $\lambda$ coincides with \eqref{eq:lambdaA}, hence
\[
c_{\rm GW}(\varphi_i, \varphi_j, \varphi_k)
= c^\flat_{\rm LG}(\de_{x_i},\de_{x_j},\de_{x_k})\,,
\]
which yields the statement of \cref{thm:msQH} for $\RR=\mathrm{A}_l$ and $\widehat{\omega}=\omega_{\bar k}$. 

\subsubsection{Proof of \cref{thm:msQH} for the $\mathrm{D}_l$ series}

Let $\omega$ be the highest weight of the defining representation of $\mathfrak{g}=\mathrm{so}_{2l}(\bbC)$. From \eqref{eq:shiftfun}, the corresponding superpotential
reads
\beq
    \lambda=\frac{\kappa_{l+1} \prod_{\omega' \in \cW(\omega)}
    (\re^{\bra \omega', x\ket}-\mu)}{\mu^{l-2}(\mu^2-1)^2}=\frac{\kappa_{l+1} 
    \prod_{i=1}^l(\mu-\kappa_i)(\mu-\kappa_i^{-1})}{\mu^{l-2}(\mu^2-1)^2}
\label{eq:lambdaD}
\eeq
%
where
\beq
\log\kappa_i = \sum_{j=1}^lG_{ij}x_j\,,  1\leq i \leq l\,,
\label{kappa_to_yi}
\eeq

and 
    $G_{ij} \coloneqq \delta_{ij}-\delta_{i,j+1} + \delta_{i,l-1}\delta_{j,l}$;
note that
    $(G^T G)_{ij}=C_{ij}\,.$
As in \eqref{eq:phiresc}, we shall normalise the coordinate $x_{l+1}$ and the quadratic differential associated to the primitive form such that:
\beq
    \log\kappa_{l+1} = \frac{x_{l+1}}{2 \nu}\,, \qquad \phi^2=\nu \l(\frac{\rd\mu}{\mu}\r)^2\,.
\eeq


Since the spectral curve is rational in this case and the $\mu$-projection is unramified, to perform  the sum of residues over critical points in \eqref{eq:todalogeta}--\eqref{eq:todalogc} we the contour of integration around on the fibres of the mirror family, and instead sum over residues at the other poles of the integrand. From \eqref{eq:lambdaD}, these are located at the support of the divisor $(\lambda)$: the locus of zeroes and poles of $\lambda$. We thus need to show that

\beq
c_{\rm GW}(\varphi_i, \varphi_j, \varphi_k) = c^\flat_{\rm LG}(\varphi_i, \varphi_j, \varphi_k) = -\sum_{p\in \mathrm{supp}(\lambda)}\mathop{\Res}_p \frac{\delta_{\de_{x_i}}(\lambda) \delta_{\de_{x_j}}(\lambda)\delta_{\de_{x_k}}(\lambda)}{\lambda^2 \rd \lambda}\phi^2.
\label{eq:GW=LG}
\eeq
Write 
\beq
R_{ijk}\coloneqq \frac{1}{2\nu}c^\flat_{\rm LG}\l(\frac{\partial}{\partial \log \kappa_i},\frac{\partial}{\partial \log \kappa_j},\frac{\partial}{\partial \log \kappa_k}\r)\,, 
\label{eq:Rijk}
\eeq
and $R_{ijk}^{[q]}$ for the contribution of the residue  at $\mu=q$ in the sum \eqref{eq:GW=LG}. We 
start with the following 
\begin{lem} 
We have
\[
    R_{ijk}^{[1]}=  R_{ijk}^{[-1]}=0\,,\quad 
    R_{ijk}^{[0]}=  R_{ijk}^{[\infty]}= \frac{\delta_{i,l+1}\delta_{j,l+1} \delta_{k,l+1}}{2(l-2)}\,,
    \]
\begin{align*}
& R_{ijk}^{[\kappa_m]} + R_{ijk}^{[1/\kappa_m]} = (\delta_{i,j,m}+\delta_{j,k,m}+\delta_{i,k,m})  \times  \bigg\{ \frac{\delta_{i,j,k} q_i}{3}  + (1-\delta_{i,j,k})
 \Big[ -(\delta_{i,j,m}\delta_{k,l+1}+\delta_{j,k,m}\delta_{i,l+1}+\delta_{i,k,m}\delta_{j,l+1}) \\
&  + (1-\delta_{i,j,m}\delta_{k,l+1}-\delta_{j,k,m}\delta_{i,l+1}-\delta_{i,k,m}\delta_{j,l+1})(\delta_{i,j,m} p_{mk} +\delta_{k,i,m} p_{mj} +\delta_{j,k,m} p_{mi}) 
    \Big] \bigg\}\,,
\end{align*}
with 
\[ p_{ij} =
\frac{\kappa _i \left(\kappa _j^2-1\right)}{\left(\kappa
   _i-\kappa _j\right) \left(\kappa _i \kappa _j-1\right)}\,,
\qquad 
q_k = \sum_{n \neq k} \frac{\kappa_n(1- \kappa _k^2)}{\left(\kappa _k-\kappa
   _n\right) \left(\kappa _k \kappa _n -1\right)}\,.
\]
\label{lem:resD}
\end{lem}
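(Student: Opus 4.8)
The plan is to evaluate every summand of \eqref{eq:GW=LG} as an honest residue in the global coordinate $\mu$, which is legitimate here because for $\RR=\mathrm{D}_l$ the spectral curve \eqref{eq:lambdaD} is rational and the $\mu$-projection is unramified, so $\overline{C_y}\simeq\bbP^1_\mu$. The one computation to perform once and for all is the logarithmic derivative of the superpotential along the frame $\{\partial/\partial\log\kappa_i\}$: writing
\[
L_i\coloneqq \frac{\delta^{(\mu)}_{\partial_{\log\kappa_i}}\lambda}{\lambda}=\frac{\partial\log\lambda}{\partial\log\kappa_i}\,,
\]
one reads off from \eqref{eq:lambdaD} that $L_i=\dfrac{\mu(\kappa_i^{-1}-\kappa_i)}{(\mu-\kappa_i)(\mu-\kappa_i^{-1})}$ for $i\leq l$ and $L_{l+1}=1$. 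Cancelling the common powers of $\lambda$ in \eqref{eq:GW=LG} and using $\phi^2=\nu(\rd\mu/\mu)^2$, the integrand collapses to the rational $1$-form $\nu\,L_iL_jL_k\,(\mu^2\,\partial_\mu\log\lambda)^{-1}\rd\mu$; the $\nu$ cancels the $1/(2\nu)$ in \eqref{eq:Rijk}, so that
\[
R_{ijk}^{[q]}=-\tfrac12\,\Res_{\mu=q}\,\frac{L_iL_jL_k}{\mu^2\,\partial_\mu\log\lambda}\,\rd\mu\,.
\]

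Two structural observations then organise everything. First, a direct check on \eqref{eq:lambdaD} shows that $\lambda$, each $L_i$, and $\phi^2$ are invariant under $\mu\mapsto\mu^{-1}$; hence so is the integrand, the points $0$ and $\infty$ are exchanged, $\pm1$ are fixed, and $\Res_{\mu=\kappa_m}=\Res_{\mu=\kappa_m^{-1}}$. This both explains the pairing in the statement and halves the labour. Second, the pole orders of the $1$-form are dictated by: $L_m$ having a simple pole exactly at $\mu=\kappa_m,\kappa_m^{-1}$; the vanishing $L_i(0)=L_i(\infty)=0$ for $i\leq l$; and $(\partial_\mu\log\lambda)^{-1}$ having a simple \emph{zero} at each simple zero of $\lambda$ and at each pole $\mu=0,\pm1,\infty$.

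From here the three elementary families are pure power counting. At $\mu=\pm1$ no $L$-factor is singular while $(\partial_\mu\log\lambda)^{-1}$ vanishes to first order, so the form is holomorphic and $R^{[\pm1]}_{ijk}=0$. At $\mu=0$, and by the involution at $\mu=\infty$, the only surviving pole comes from $\mu^{-2}$; since any index $\leq l$ contributes a factor $L_i$ vanishing at $0$, the residue is nonzero only for $i=j=k=l+1$, where it equals $-1/(l-2)$, giving $R^{[0]}_{ijk}=R^{[\infty]}_{ijk}=\delta_{i,l+1}\delta_{j,l+1}\delta_{k,l+1}/(2(l-2))$. At a zero $\mu=\kappa_m$ the $1$-form has a pole of order $N-1$, where $N=\#\{a\in\{i,j,k\}: a=m\}$; this vanishes unless $N\geq2$, accounting for the prefactor $\delta_{i,j,m}+\delta_{j,k,m}+\delta_{i,k,m}$. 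When $N=2$ the pole is simple and its residue is the value $L_s(\kappa_m)$ of the remaining regular factor, namely $-p_{ms}$ for $s\leq l$ and $1$ for $s=l+1$; combined with $-\tfrac12(\Res_{\kappa_m}+\Res_{\kappa_m^{-1}})=-\Res_{\kappa_m}$, this reproduces the off-diagonal entries $p_{ms}$ (resp.\ $-1$).

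The main obstacle is the diagonal case $N=3$, i.e.\ $i=j=k=m$, where the $1$-form has a \emph{double} pole and the residue depends on subleading Laurent data. Writing $u=\mu-\kappa_m$, I would expand $L_m=-\kappa_m u^{-1}+(\kappa_m^2-1)^{-1}+O(u)$, together with $\partial_\mu\log\lambda=u^{-1}+h(\kappa_m)+O(u)$ and $\mu^{-2}=\kappa_m^{-2}-2\kappa_m^{-3}u+\cdots$, and extract the coefficient of $u^{-1}$ in $L_m^3\,\mu^{-2}\,(\partial_\mu\log\lambda)^{-1}$, obtaining $\Res_{\kappa_m}=2+3(\kappa_m^2-1)^{-1}+\kappa_m\,h(\kappa_m)$. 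The difficulty is purely algebraic: $h(\kappa_m)$ collects contributions from \emph{all} zeros and poles,
\[
h(\kappa_m)=\sum_{n\neq m}\Big(\tfrac{1}{\kappa_m-\kappa_n}+\tfrac{1}{\kappa_m-\kappa_n^{-1}}\Big)-\tfrac{l-2}{\kappa_m}-\tfrac{3\kappa_m}{\kappa_m^2-1}\,,
\]
and the $\mu=0,\pm1,\infty$ terms must cancel against the constant $2+3(\kappa_m^2-1)^{-1}$. The crux is the per-summand identity
\[
1+\frac{\kappa_n}{\kappa_m-\kappa_n}+\frac{1}{\kappa_m\kappa_n-1}=\frac{\kappa_n(\kappa_m^2-1)}{(\kappa_m-\kappa_n)(\kappa_m\kappa_n-1)}\,,
\]
which, summed over $n\neq m$, absorbs all the residual pole data and yields exactly $\Res_{\kappa_m}=-q_m$. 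Hence $R^{[\kappa_m]}_{mmm}+R^{[\kappa_m^{-1}]}_{mmm}=-\Res_{\kappa_m}=q_m$, matching the diagonal value $3\cdot q_m/3$ of the stated formula and completing the proof.
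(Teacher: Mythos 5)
Your proof is correct and follows the same route as the paper, which simply asserts that the lemma ``follows from a lengthy, but straightforward calculation of the rational residues'' at $\mu=0,\infty,\pm1,\kappa_m^{\pm1}$ and omits all details; you supply exactly those details, and your key intermediate quantities (the logarithmic derivatives $L_i$, the expansion $L_m=-\kappa_m u^{-1}+(\kappa_m^2-1)^{-1}+O(u)$, the constant term $h(\kappa_m)$, and the final partial-fraction identity yielding $-q_m$) all check out. The one genuine addition over the paper's (non-)proof is your observation that the integrand is invariant under $\mu\mapsto\mu^{-1}$, which cleanly explains the pairing of residues at $\kappa_m$ and $\kappa_m^{-1}$ (and at $0$ and $\infty$) and halves the computation.
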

\begin{proof}
The statement follows from a lengthy, but straightforward calculation of the rational residues in 
\beq 
R_{ijk}^{[q]}=- \mathop{\Res}_{\mu=q}\frac{\prod_{m \in \{i,j,k\}}\l(\frac{-\kappa_m}{\mu-\kappa_m}+\frac{\kappa_m^{-1}}{\mu-\kappa_m^{-1}}\r)^{1-\delta_{m,l+1}}
}{\sum_{r=1}^l(\frac{1}{\mu-\kappa_r}+\frac{1}{\mu-\kappa_r^{-1}})-\frac{l-2}{\mu}-\frac{4\mu}{\mu^2-1}}\frac{d\mu}{2\mu^2}\,,
\eeq
for $q=0$, $\infty$, $\pm 1$, $\kappa_m^{\pm}$; we omit the details here. 
\end{proof}

\begin{cor}
\label{cor:mirDclass}
We have, for $1\leq i,j \leq l+1$,
\begin{align}
c^\flat_{\rm LG}\l(\frac{\partial}{\partial x_{i}},\frac{\partial}{\partial x_{j}},\frac{\partial}{\partial x_{l+1}}\r)= 
\begin{cases} 
\frac{1}{4\nu^2(l-2)}\,, & i=j=l+1\,, \\
-C_{ij}\,, & i, j \neq l+1\,, \\
0 & {\rm else}\,.
\end{cases}
\label{eq:mirDclass}
\end{align}
\end{cor}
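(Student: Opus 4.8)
The plan is to read off the three-point functions in the flat chart $(x_1,\dots,x_{l+1})$ from the residue evaluations of \cref{lem:resD}, which are recorded in the $\log\kappa$-frame, via the linear change of coordinates \eqref{kappa_to_yi}. Because \eqref{kappa_to_yi} is block-diagonal --- the first $l$ variables $\log\kappa_1,\dots,\log\kappa_l$ depend only on $x_1,\dots,x_l$ through the constant matrix $G$, while $\log\kappa_{l+1}=x_{l+1}/(2\nu)$ depends only on $x_{l+1}$ --- the coordinate vector fields transform as $\partial_{x_i}=\sum_{a=1}^{l}G_{ai}\,\partial_{\log\kappa_a}$ for $i\leq l$, and $\partial_{x_{l+1}}=\tfrac{1}{2\nu}\,\partial_{\log\kappa_{l+1}}$. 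Inserting these into $c^\flat_{\rm LG}$ and recalling the normalisation $c^\flat_{\rm LG}(\partial_{\log\kappa_i},\partial_{\log\kappa_j},\partial_{\log\kappa_k})=2\nu R_{ijk}$ from \eqref{eq:Rijk}, the corollary reduces to evaluating the three index patterns $R_{a,b,l+1}$, $R_{b,l+1,l+1}$ and $R_{l+1,l+1,l+1}$ through the formulas of \cref{lem:resD}.

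First I would dispose of the three patterns in turn. For $R_{a,b,l+1}$ with $a,b\leq l$ the residues at $\mu=0,\infty,\pm1$ all vanish (the $[0]$ and $[\infty]$ terms carry the factor $\delta_{a,l+1}\delta_{b,l+1}\delta_{l+1,l+1}=0$), so only the paired $\kappa_m$-contributions survive. In the master formula the outer factor $\delta_{a,b,m}+\delta_{b,l+1,m}+\delta_{a,l+1,m}$ collapses to $\delta_{a,b,m}$, since the two summands with an $l+1$ slot matched against $m\leq l$ vanish; hence a nonzero contribution forces $a=b=m$. A short evaluation of the bracket at $i=j=m=a$, $k=l+1$ (where $\delta_{i,j,k}=0$, the first bracket term equals $-1$, and the second is killed by the prefactor $1-\delta_{i,j,m}\delta_{k,l+1}-\cdots=0$) gives $R_{a,a,l+1}^{[\kappa_a]}+R_{a,a,l+1}^{[1/\kappa_a]}=-1$, so $R_{a,b,l+1}=-\delta_{ab}$. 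For $R_{b,l+1,l+1}$ with $b\leq l$ every summand of the outer factor contains a pair of $l+1$ indices matched against $m\leq l$, so it vanishes identically and $R_{b,l+1,l+1}=0$. For $R_{l+1,l+1,l+1}$ the $\kappa_m$-contributions again vanish, while $R^{[0]}=R^{[\infty]}=\tfrac{1}{2(l-2)}$, giving $R_{l+1,l+1,l+1}=\tfrac{1}{l-2}$.

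Assembling the pieces then yields each case of \eqref{eq:mirDclass}. The diagonal case gives $c^\flat_{\rm LG}(\partial_{x_{l+1}},\partial_{x_{l+1}},\partial_{x_{l+1}})=(2\nu)^{-3}\cdot 2\nu\,R_{l+1,l+1,l+1}=\tfrac{1}{4\nu^2(l-2)}$. The mixed case, say $i\leq l$ and $j=l+1$, gives $\tfrac{1}{2\nu}\sum_{a=1}^{l}G_{ai}\,R_{a,l+1,l+1}=0$. Finally, for $i,j\leq l$ one obtains $\sum_{a,b=1}^{l}G_{ai}G_{bj}\,R_{a,b,l+1}=-\sum_{a=1}^{l}G_{ai}G_{aj}=-(G^{T}G)_{ij}=-C_{ij}$, invoking the identity $(G^{T}G)_{ij}=C_{ij}$ recorded after \eqref{kappa_to_yi}. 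This exhausts all entries of \eqref{eq:mirDclass}.

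The argument is entirely mechanical once \cref{lem:resD} is available, so I expect no genuine obstacle; the only point demanding care is the bookkeeping of the nested Kronecker deltas in the master formula --- specifically, correctly recognising that for the pattern $(a,b,l+1)$ the outer factor reduces to $\delta_{a,b,m}$ and that the quadratic $p$-term is annihilated by the prefactor $1-\delta_{i,j,m}\delta_{k,l+1}-\cdots$ precisely when $a=b=m$, leaving the clean value $-1$ that, after transport through $G$, reproduces the Cartan matrix $-C_{ij}$.
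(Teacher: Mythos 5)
Your proposal is correct and follows essentially the same route as the paper: transport the coordinate vector fields through the block-diagonal change of variables \eqref{kappa_to_yi} (the matrix $G$ on the first $l$ slots and the factor $1/(2\nu)$ on the last), evaluate the three index patterns $R_{a,b,l+1}=-\delta_{ab}$, $R_{b,l+1,l+1}=0$, $R_{l+1,l+1,l+1}=\tfrac{1}{l-2}$ from \cref{lem:resD}, and contract with $G$ using $(G^TG)_{ij}=C_{ij}$. The only difference is presentational: you spell out the Kronecker-delta bookkeeping that the paper dismisses as ``immediate from \cref{lem:resD}'', and your reading of the nested deltas is accurate.
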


\begin{proof}
Since $\log\kappa_{l+1}=x_{l+1}/(2\nu)$, the l.h.s. 
for $i=j=l+1$ is 
\[
\frac{1}{4\nu^2}R_{l+1,l+1,l+1}=\frac{R_{l+1,l+1,l+1}^{[0]}+R_{l+1,l+1,l+1}^{[\infty]}}{4\nu^2}=\frac{1}{4\nu^2(l-2)}\,.
\]
The vanishing when $i\neq j$ and either $i=l+1$ or $j=l+1$ is immediate from \cref{lem:resD}.  For the case $1\leq i,j \leq l$, we use \eqref{kappa_to_yi} to get
\begin{align*}
c^\flat_{\rm LG}\l(\frac{\partial}{\partial x_{l+1}},\frac{\partial}{\partial x_i},\frac{\partial}{\partial x_j}\r) 
&=\sum_{k,m,n=1}^{l} G_{ki} \Big(R_{k,m,l+1}^{[\kappa_n ]}+R_{k,m,l+1}^{[1/\kappa_n ]}\Big)G_{mj} =-\sum_{k=1}^{l} G_{ki} G_{kj} =-C_{ij}\,.
\end{align*}
\end{proof}
Recall that,
    w.r.t. the orthonormal basis $\{\epsilon_i\}_{i=1}^l$ of $\mathbb{R}^l$, the $\mathrm{D}_l$ root system is
    \begin{align*}
    \Pi = \{\epsilon_i-\epsilon_{i+1}\}_{i=1}^{l-1} \cup \{\epsilon_{l-1}+\epsilon_l\}\,, \quad 
        \RR^+=\{\epsilon_i\pm \epsilon_j\}_{i<j}\,, \quad \bra \alpha_i, \epsilon_j \ket= G_{ji}\,.
    \end{align*}
\begin{defn}\label{defn_Omega}
Choose a bijection \[\sigma:\Big\{(i,j)\,\big|\,1\leq i<j\leq l\Big\}\longleftrightarrow \bigg\{1,\dots,\frac{l(l-1)}{2}\bigg\}\,,\] and for each pair $(i,j)$ with $1\leq i<j\leq l$, define 
$\Theta^\pm$ and 
$\Theta$ by
\[
  \Theta^+_{\sigma(i,j),k} \coloneqq
  \begin{cases}
    1 & \text{if $k=i$ or $k=j$}\,, \\
    0 & \text{otherwise}\,,
  \end{cases}\qquad
  \Theta^-_{\sigma(i,j),k} \coloneqq
  \begin{cases}
    1 & \text{if $k=i$}\,, \\
    -1 & \text{if $k=j$}\,, \\
    0 & \text{otherwise}\,,
  \end{cases}\qquad 
\Theta \coloneqq
\left(
\begin{matrix}
    \Theta^+\\
    \Theta^-
\end{matrix}
\right)\,.
\]

\end{defn}
\begin{cor}
Let $\epsilon$ be the column vector $(\epsilon_1,\dots,\epsilon_l)^T$. Then the rows of $\Theta \cdot \epsilon$ give all the positive roots of $\mathrm{D}_l$, 
\begin{equation}\label{positive_weights}
        \left(
    \beta_1, \dots
    \beta_{l(l-1)}
\right)
=\Theta \epsilon\,.
\end{equation}
\end{cor}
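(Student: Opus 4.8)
The plan is to read off the product $\Theta\epsilon$ row by row directly from \cref{defn_Omega}, so that the statement reduces to a bookkeeping check against the list $\RR^+=\{\epsilon_i\pm\epsilon_j\}_{i<j}$ recalled just above. Since $\Theta$ is the vertical stack of $\Theta^+$ over $\Theta^-$, the $l(l-1)$ rows of $\Theta\epsilon$ split into two blocks of $\frac{l(l-1)}{2}$ rows each, both indexed by the pairs $(i,j)$ with $1\leq i<j\leq l$ via the bijection $\sigma$. Observe that, since the entries $\epsilon_k$ are themselves the orthonormal basis vectors of $\mathbb{R}^l$, each row of $\Theta\epsilon$ is the vector $\sum_{k=1}^l \Theta_{r,k}\,\epsilon_k \in \mathbb{R}^l$.

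First I would treat the top block. For the row labelled $\sigma(i,j)$ of $\Theta^+$ the definition gives $\Theta^+_{\sigma(i,j),k}=\delta_{k,i}+\delta_{k,j}$, so that
\[
\big(\Theta^+\epsilon\big)_{\sigma(i,j)} = \sum_{k=1}^l \Theta^+_{\sigma(i,j),k}\,\epsilon_k = \epsilon_i+\epsilon_j\,.
\]
Likewise, the row $\sigma(i,j)$ of $\Theta^-$ has entries $\Theta^-_{\sigma(i,j),k}=\delta_{k,i}-\delta_{k,j}$, whence
\[
\big(\Theta^-\epsilon\big)_{\sigma(i,j)} = \epsilon_i-\epsilon_j\,.
\]

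To conclude, I would invoke the bijectivity of $\sigma$: as $(i,j)$ ranges over all pairs with $1\leq i<j\leq l$, the top block produces each vector $\epsilon_i+\epsilon_j$ exactly once and the bottom block each vector $\epsilon_i-\epsilon_j$ exactly once. Together these exhaust $\{\epsilon_i\pm\epsilon_j\}_{i<j}=\RR^+$, and a count gives $2\cdot\frac{l(l-1)}{2}=l(l-1)=|\RR^+|$ rows with no repetitions, establishing \eqref{positive_weights}.

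Since this is a direct definitional unwinding, I do not anticipate any genuine obstacle; the only point demanding even minimal care is the bijectivity of $\sigma$, which is precisely what rules out repetitions and guarantees that each of the two root families $\epsilon_i+\epsilon_j$ and $\epsilon_i-\epsilon_j$ is enumerated once and only once.
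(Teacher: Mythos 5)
Your proof is correct and is exactly the definitional unwinding the paper has in mind — indeed the paper states this corollary without proof, since it follows immediately from \cref{defn_Omega} and the displayed description of $\RR^+$ in the orthonormal basis. Your explicit computation of the two blocks and the appeal to the bijectivity of $\sigma$ to rule out repetitions supplies precisely the omitted (routine) verification.
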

Define now coordinates $(\tau_1,\dots,\tau_l)$ of $\mathrm{H}^2(Z, \bbC)$ via 
\begin{equation}\label{x_to_tau}
(x_1, \dots, x_l)^T
=C^{-1}G^T
(\tau_1, \dots, \tau_l)^T
\,.    
\end{equation}
By \eqref{kappa_to_yi}, we have:
\begin{equation*}
(\log\kappa_1, \dots, \log\kappa_l)^T = G (x_1, \dots, x_l)^T = G C^{-1}G^T
(\tau_1, \dots, \tau_l)^T
\end{equation*}
Using $G^T G=C$, we get $GC^{-1}G^{T}=I$, hence $\log \kappa_i=\tau_i$, $1\leq i\leq l$.

\begin{lem}\label{prop:FGWDplus}
The positive degree part of the genus Gromov--Witten primary potential of $Z$ is
\begin{equation}
    F_{\rm GW}^+=2\nu\sum_{i=1}^{l(l-1)}\mathrm{Li}_3\l(\exp\l(\sum_{j=1}^l -\Theta_{i j}\tau_j\r)\r)\,.
\end{equation}
\end{lem}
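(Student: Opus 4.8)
The plan is to obtain the stated expression by a direct change of variables in the closed form of $F^+_{\rm GW}$ already recorded in \eqref{GW_potential}, namely
\[
F^+_{\rm GW} = 2\nu \sum_{\beta\in\RR^+} \mathrm{Li}_3\l(\re^{-\langle\beta,h\rangle}\r)\,,
\]
which depends on $h\in\mathfrak{h}$ only through $x_1,\dots,x_l$ and is therefore unaffected by the normalisation of $x_{l+1}$ fixed in the $\mathrm{D}_l$ section. Since $\mathrm{D}_l$ has exactly $l(l-1)$ positive roots, the first step is to replace the intrinsic sum over $\RR^+$ by the indexed sum $\sum_{i=1}^{l(l-1)}$, using \eqref{positive_weights} to identify the $i$-th positive root as $\beta_i = \sum_{j} \Theta_{ij}\epsilon_j$.

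The remaining work is to rewrite the exponent $\langle\beta_i,h\rangle$ in the flat chart $\tau$. Expanding the pairing as $\langle\beta,h\rangle = \sum_{m=1}^l \langle\beta,\alpha_m\rangle x_m$ (consistent with the $\coth$ argument in \eqref{eq:cBG}) and substituting the convention $\langle\alpha_m,\epsilon_j\rangle = G_{jm}$ recalled above gives
\[
\langle\beta_i,h\rangle = \sum_{j,m} \Theta_{ij}\, G_{jm}\, x_m = \sum_{j} \Theta_{ij}\,(Gx)_j\,.
\]
I would then invoke the identity $\log\kappa_j = (Gx)_j = \tau_j$, established just above the statement from \eqref{kappa_to_yi} together with $GC^{-1}G^T = I$. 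Feeding $\langle\beta_i,h\rangle = \sum_j \Theta_{ij}\tau_j$ back into the polylogarithm sum reproduces the claimed formula.

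No step here is genuinely hard; the entire content is the bookkeeping matching the abstract root-lattice pairing to the explicit incidence matrix $\Theta$ and the coordinates $\tau$. The only place demanding care is the consistent handling of the conventions $\langle\alpha_i,\epsilon_j\rangle = G_{ji}$ and the transpose placement in \eqref{x_to_tau}, so that the contraction $\sum_m G_{jm} x_m$ collapses to $\log\kappa_j$ rather than a transposed or permuted variant; once those are pinned down the substitution is immediate and the lemma follows.
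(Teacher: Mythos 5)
Your proof is correct and follows essentially the same route as the paper: both are a direct change of variables in the closed form $F^+_{\rm GW}=2\nu\sum_{\beta\in\RR^+}\mathrm{Li}_3(\re^{-\langle\beta,h\rangle})$, reindexing the positive roots by the rows of $\Theta$ and using \eqref{x_to_tau} together with $GC^{-1}G^{T}=I$ to turn the exponent into $\sum_j\Theta_{ij}\tau_j$. The only cosmetic difference is that you contract directly in the orthonormal basis via $\langle\alpha_m,\epsilon_j\rangle=G_{jm}$ and $\log\kappa_j=(Gx)_j=\tau_j$, whereas the paper first writes the roots in the fundamental-weight basis through the auxiliary matrix $A_{ij}=\langle\beta_i,\alpha_j\rangle$ and then verifies $\Theta=AC^{-1}G^{T}$; your bookkeeping is marginally more direct but proves the same identity.
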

\begin{proof}
In term of the matrix $A_{ij}=\bra \beta_i, \alpha_j \ket$ of coefficients of the positive roots $(\beta_1, \dots, \beta_{l(l-1)})$ in the Omega basis,
\begin{equation}
\RR^+ 
=A \Omega\,,
\end{equation}
we find, using \eqref{x_to_tau}, 
\[ F_{\rm GW}^+  = 2\nu \sum_{\sigma =1}^{l(l-1)}\mathrm{Li}_3\l(\exp\l(\sum_{j=1}^l -(AC^{-1}G^T)_{\sigma j}\tau_j\r)\r)\,.
\]
On the other hand, expressing fundamental weights in the orthonormal basis, 
\begin{equation*}
    \left(
    \beta_1, \dots, 
    \beta_{l(l-1)}
    \right)^T
=AC^{-1}
\left(
     \alpha_1,
     \dots, 
    \alpha_l
\right)^T
=AC^{-1}G^T
\left(
    \epsilon_1,
    \dots,
    \epsilon_l
\right)^T\,. 
\end{equation*}
From this we deduce that $\Theta=AC^{-1}G^T$ by comparing to \eqref{positive_weights}, thereby proving the claim.
\end{proof}
\begin{proof}[Proof of \cref{thm:msQH} for $\RR=\mathrm{D}_l$]
By \cref{cor:mirDclass}, 
we need only consider the case
$1\leq i,j,k \leq l$. From \cref{prop:FGWDplus}, we get
\begin{align}
    &\frac{1}{2\nu}\frac{\partial^3 F_{\rm GW}^+}{\partial \tau_i \partial \tau_j \partial \tau_k}=-\sum_{\sigma=1}^{l(l-1)}\Theta_{\sigma i}\Theta_{\sigma j}\Theta_{\sigma k}\frac{\exp(-\sum_{m=1}^l\Theta_{\sigma m}\tau_m)}{1-\exp(-\sum_{m=1}^l\Theta_{\sigma m}\tau_m)},\label{3-point_Omega}
\end{align}
where $\sigma$ is an element of the index set $\sigma \in \{(m,n)|1\leq m<n\leq l\}$, and consider the case $k=j=i$ for starters. From \cref{defn_Omega}, this reads explicitly as 
\begin{align*}
\frac{1}{2\nu}\frac{\partial^3 F^+_{\rm GW}}{\partial \tau_i \partial \tau_i \partial \tau_i} =
-\sum_{k\neq i}\frac{\exp(-\tau_i-\tau_k)}{1-\exp(-\tau_i-\tau_k)}
    -\sum_{k>i}\frac{\exp(-\tau_i+\tau_k)}{1-\exp(-\tau_i+\tau_k)}
    + \sum_{k<i}\frac{\exp(-\tau_k+\tau_i)}{1-\exp(-\tau_k+\tau_i)}\,.
    \label{eq:3GWiii}
\end{align*}
Likewise, for $k=j\neq i$, we spell out \eqref{3-point_Omega} to be
\[
\frac{1}{2\nu}\frac{\partial^3F^+_{\rm GW}}{\partial \tau_i \partial \tau_i \partial \tau_j}=-\frac{\exp(-\tau_i-\tau_j)}{1-\exp(-\tau_i-\tau_j)}+\frac{\exp(-\tau_i+\tau_j)}{1-\exp(-\tau_i+\tau_j)} + \frac{1}{2}(\mathrm{sgn}(i-j)+1)\,.
\]
As, for fixed $\sigma$, $\Theta_{\sigma i}$  is only non-zero for exactly two values of $i$, we finally have that, for $i,j,k$ all distinct,
\[
\frac{\partial^3F^+_{\rm GW}}{\partial \tau_i \partial \tau_j \partial \tau_k} = 0\,. 
\]
Let us compare the above back to the structure constants of the Landau--Ginzburg product \eqref{eq:Rijk}.
From \cref{lem:resD} and for $i,j,k$ all distinct, we find
\beq
R_{iii}=\frac{1}{2\nu}\frac{\partial^3F^+_{\rm GW}}{\partial \tau_i \partial \tau_i \partial \tau_i}-l+i 
    \,, \quad
    R_{iij}=\frac{1}{2\nu}\frac{\partial^3 F^+_{\rm GW}}{\partial \tau_i \partial \tau_i \partial \tau_j}-\frac{1}{2}(\mathrm{sgn}(i-j)+1) 
    \,,\quad
R_{ijk}=\frac{1}{2\nu}\frac{\partial^3 F^+_{\rm GW}}{\partial \tau_i \partial \tau_j \partial \tau_k}=0\,.
\label{eq:RvsGW}
\eeq
Hence \cref{thm:msQH} for type $\mathrm{D}_l$ reduces to verifying the following identities relating the additive terms in the r.h.s. of \eqref{eq:RvsGW} to the $\mathsf{T}$-equivariant triple intersection numbers of $Z$:
\[
\sum_{\sigma=1}^{l(l-1)}\Theta_{\sigma i}^2\Theta_{\sigma j}=
\begin{cases}
2(l- i)\,, & 1\leq i=j\leq l\,, \\
\mathrm{sgn}(i-j)+1\,, & 1\leq i\neq j\leq l\,.
\end{cases}
\]
These are easily verified using the definition of $\Theta$, 
    concluding the proof.
\end{proof}

\subsection{Dubrovin duality via initial conditions}
\label{sec:dd}

A direct proof of \cref{thm:ddQH} would entail showing that the structure constants of $\cM_{\rm AW}^{\flat}$, expressed in terms of those of $\cM_{\rm AW}$ through the duality relation \eqref{eq:AD_formula}, coincide with the quantum cohomology product  \eqref{eq:cBG} of $Z$. Explicitly, in our case \eqref{eq:AD_formula} reads
\begin{equation}
    \frac{\partial^3 F_{\rm GW}}{\partial x_{i}\partial x_{j}\partial x_{k}}(x) = \sum_{a,b,A,B,C}(\eta^\flat_{\rm AW})_{ia} (\eta^\flat_{\rm AW})_{jb}\frac{\partial t_C}{\partial x_k}\frac{\partial x_a}{\partial t_A}\frac{\partial x_b}{\partial t_B} \l(c_{\rm AW}\r)^{AB}_{C}(t(x))\,,
    \label{eq:AD2}
\end{equation}
where 
\beq
\l(c_{\rm AW}\r)^{AB}_C = \sum_{M,N=1}^{l+1} \eta^{AM}_{\rm AW} \eta^{BN}_{\rm AW} \frac{\de^3 F_{\rm AW}}{\de t_M \de t_N \de t_C}\,.
\label{eq:cflat}
\eeq
\\
For a given marked pair $(\RR, \widehat{\omega})$,  \eqref{eq:AD2} could \emph{a priori} be proved by brute-force, as both sides of the equality are calculable, at least in principle. For the r.h.s., closed-form expressions for the prepotential $F_{\rm AW}(t)$ and the Saito flat coordinates $\{t_A(x)\}$ in linear coordinates on $\mathfrak{h}\times \bbC$
were found in \cite{MR1606165,Brini:2017gfi,Brini:2021pix};  for the l.h.s., the Gromov--Witten quantum prepotential is given by \eqref{eq:cBG}. \\ In practice, however, this approach is largely unfeasible. The entries of the Jacobian matrix $\partial_{x_k} t_C(x)$ are trigonometric polynomials with e.g. up to billions of terms for $\RR=\mathrm{E}_8$; for the same reason, computing the inverse $\partial_{t_A} x_a$ relevant to \eqref{eq:AD2} is well out of reach of symbolic computation packages, such as \emph{Mathematica}, even for very low values of $l$. \\

As we will show, both sides of \eqref{eq:AD2} are uniquely determined by their values at a (small) finite number of points in the semi-simple locus $\cM^{\rm ss}_{\rm GW} = \mathfrak{h}^{\rm reg} \oplus \bbC$: we will refer to thid as a \emph{set of initial conditions} for the Dubrovin duality. The functional equality \eqref{eq:AD2} reduces then to a \emph{numerical} equality over the set of initial conditions, which can in turn be verified very effectively. As explained in \cref{rmk:nu_dep}, we will formally set $\nu=1$ throughout this Section.

\subsubsection{Reduction to initial conditions}
We will start by establishing various homogeneity properties for the tensors appearing in \eqref{eq:AD2}. Following \cite{MR1606165}, we define $d_{l+1} \coloneqq \deg(y_{l+1})=0$. On the complement of the discriminant, the coefficients $(\eta^\flat_{\rm AW})^{\alpha\beta}$ of \eqref{eq:EAWmaniLG} in the chart parametrised by the coordinates $(y_1, \dots, y_{l+1})$ satisfy \cite[Lem.~2.1]{MR1606165}
\[
\deg (\eta^\flat_{\rm AW})^{\alpha \beta} = d_\alpha + d_\beta\,,
\]
while the inverse Gram matrix of the Saito metric is, by definition, 
\[\eta_{\rm AW}^{\alpha \beta}(y) = \frac{\partial (\eta^\flat_{\rm AW})^{\alpha \beta}}{\partial \widehat{y}}\,.
\]   
By \cite[Cor.~2.5]{MR1606165}, the flat coordinates $\{t_A\}_{A =1, \dots , l}$ are polynomials in $y_1,\dots,y_l,\re^{y_{l+1}}$ with  $\deg t_A = d_A$. Furthermore, from \cite{B02}, the determinant of the Jacobian matrix 
associated to the change-of-variables $Y_i \longrightarrow Y_i(x_1, \dots , x_l)$ is  
\beq
\delta \coloneqq \prod_{\beta\in R^+}\left(e^{\langle\beta, h\rangle/2}-e^{-\langle\beta, h\rangle/2}\right).
\eeq
By definition, $\delta$ is anti-invariant under the Weyl group action, 
    \[\delta(w h) = (-1)^{l(w)} \delta(h).\]
By orthogonal extension to $\mathfrak{h} \times \bbC$, the determinant of the Jacobian matrix associated to the change-of-variables $y_i \longrightarrow y_i(x_1, \dots , x_{l+1})$
is  
\beq 
\Delta := c\re^{(d_1 + \dots + d_l)x_{l+1}}\delta\,,
\label{disc}
\eeq
and $\Delta$ is therefore, for the same reason as $\delta$, anti-invariant w.r.t. the Weyl group action. 

\begin{lem}
    $\Delta^2\in \mathcal{A}$. Moreover, $\Delta^2$ is quasi-homogeneous of degree $2(d_1 + \dots + d_l)$.
    \label{lem:Delta2inA}
\end{lem}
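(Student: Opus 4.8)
The plan is to verify the three defining properties of $\mathcal{A}$ in \cref{prop:Aring} — $\widetilde{\mathcal{W}}$-invariance, being a Fourier polynomial, and boundedness in the limit \eqref{limit} — for $\Delta^2$, and then to read off the degree from the explicit shape of $\Delta^2$ in the $y$-coordinates. The starting observation is that $\delta$ depends only on $h$ and is $\mathcal{W}$-anti-invariant, so its square $\delta^2$ is a $\mathcal{W}$-invariant Fourier polynomial whose exponents lie in the root lattice, since each factor $(\re^{\langle\beta,h\rangle/2}-\re^{-\langle\beta,h\rangle/2})^2 = \re^{\langle\beta,h\rangle}-2+\re^{-\langle\beta,h\rangle}$ has root-lattice exponents. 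By the theorem on exponential Weyl invariants \cite{B02}, this gives $\delta^2 \in \mathcal{I} = \bbC[Y_1,\dots,Y_l]$; I write $\delta^2 = P(Y_1,\dots,Y_l)$ for a genuine polynomial $P$.

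First I would establish invariance. By \eqref{disc}, $\Delta^2 = c^2\,\re^{2(d_1+\dots+d_l)x_{l+1}}\,\delta^2$. The factor $\delta^2$ is manifestly $\mathcal{W}$-invariant, and it is also invariant under a coroot translation $h \mapsto h + 2\pi\ri\,\alpha^\vee$ and under the extended shift $h \mapsto h + 2\pi\ri\, l\,\widehat{\omega}$, because its exponents $\gamma$ lie in the root lattice, so that $\langle\gamma,\alpha^\vee\rangle, \langle\gamma,\widehat{\omega}\rangle \in \bbZ$ and the phases $\re^{2\pi\ri\langle\gamma,\alpha^\vee\rangle}$, $\re^{2\pi\ri\, l\langle\gamma,\widehat{\omega}\rangle}$ are trivial. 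The prefactor is unaffected by the Weyl and coroot actions, while under the extended shift $x_{l+1}\mapsto x_{l+1}-2\pi\ri\, l$ it picks up $\re^{-4\pi\ri\, l(d_1+\dots+d_l)} = 1$, because $2(d_1+\dots+d_l) = \langle 2\mathsf{w},\widehat{\omega}\rangle = \sum_{\beta\in\RR^+}\langle\beta,\widehat{\omega}\rangle \in \bbZ$ by \eqref{eq:rho}. Hence $\Delta^2$ is $\widetilde{\mathcal{W}}$-invariant, and the same integrality shows that $\Delta^2$ is a Fourier polynomial in the sense of \cref{prop:Aring}.

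The main content, and the step I expect to be the real obstacle, is boundedness in the limit \eqref{limit}. Along $x = x^{[0]} + \widehat{\omega}\tau$, $x_{l+1} = x^{[0]}_{l+1} - \tau$ with $\tau\to+\infty$, each factor of $\delta^2$ grows like $\re^{\langle\beta,\widehat{\omega}\rangle\tau}$ with $\langle\beta,\widehat{\omega}\rangle = m_{\bar k}(\beta) \geq 0$ for $\beta\in\RR^+$, so to leading order $\delta^2 \sim \re^{(\sum_{\beta\in\RR^+}\langle\beta,\widehat{\omega}\rangle)\tau} = \re^{2(d_1+\dots+d_l)\tau}$, whereas the prefactor decays as $\re^{2(d_1+\dots+d_l)x_{l+1}} \sim \re^{-2(d_1+\dots+d_l)\tau}$. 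The two exponents cancel exactly — this is precisely the identity $\sum_{\beta\in\RR^+}\langle\beta,\widehat{\omega}\rangle = \langle 2\mathsf{w},\widehat{\omega}\rangle = 2(d_1+\dots+d_l)$ coming from $\sum_{\beta\in\RR^+}\beta = 2\mathsf{w}$ and $d_i = \langle\omega_i,\widehat{\omega}\rangle$ — and the subleading terms decay faster, so $\Delta^2 = O(1)$. With invariance, the Fourier-polynomial property, and boundedness established, \cref{prop:Aring} yields $\Delta^2 \in \mathcal{A}$.

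Finally, for the degree, I would substitute $Y_\alpha = \re^{-d_\alpha x_{l+1}}\,y_\alpha$ from \eqref{eq:yi} into $\Delta^2 = c^2\,\re^{2(d_1+\dots+d_l)x_{l+1}}\,P(Y_1,\dots,Y_l)$, obtaining $\Delta^2 = c^2\sum_{\mathbf{m}} c_{\mathbf{m}}\,\big(\re^{y_{l+1}}\big)^{2(d_1+\dots+d_l)-\sum_\alpha m_\alpha d_\alpha}\prod_\alpha y_\alpha^{m_\alpha}$, where the membership $\Delta^2\in\mathcal{A}$ just proved guarantees that every exponent of $\re^{y_{l+1}}$ occurring is a non-negative integer. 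Each monomial then has degree $\big(2(d_1+\dots+d_l)-\sum_\alpha m_\alpha d_\alpha\big) + \sum_\alpha m_\alpha d_\alpha = 2(d_1+\dots+d_l)$, so $\Delta^2$ is quasi-homogeneous of degree $2(d_1+\dots+d_l)$, completing the proof.
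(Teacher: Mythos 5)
Your proof is correct and follows essentially the same route as the paper's: check $\widetilde{\mathcal{W}}$-invariance on the generators using the integrality of root/weight pairings and the identity $\langle\mathsf{w},\widehat{\omega}\rangle=\sum_i d_i$, verify boundedness in the limit \eqref{limit} by the exact cancellation of the exponential growth of $\delta^2$ against the decay of $\re^{2(d_1+\dots+d_l)x_{l+1}}$, and read off the degree from the power of $\re^{x_{l+1}}$ after substituting $Y_\alpha=\re^{-d_\alpha x_{l+1}}y_\alpha$. The only cosmetic difference is that you work with $\delta^2$ as a root-lattice Fourier polynomial throughout, whereas the paper tracks the phases of $\delta$ itself via the factorisation $\delta=\re^{\langle\mathsf{w},h\rangle}\prod_{\beta\in\RR^+}(1-\re^{-\langle\beta,h\rangle})$.
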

\begin{proof}
We first show the invariance of $\Delta^2$ under the extended affine Weyl group $\widetilde{\mathcal{W}}$. It suffices to check it on the generators, given by the Weyl reflections, the co-root lattice generators, and an extra translation 
\beq
(h,x_{l+1})\longrightarrow (h + 2\pi i \widehat{\omega}, x_{l+1} - 2 \pi i)\,.
\label{eq:afftr}
\eeq
Obviously, since $\Delta$ is anti-invariant, $\Delta^2$ is a Weyl group invariant. For the invariance under the other generators of $\widetilde{\mathcal{W}}$, we use 
\beq
\delta = \re^{\langle \mathsf{w},h\rangle}\prod_{\beta\in \RR^+}\left( 1- \re^{-\langle\beta,h\rangle}\right)\,,
\label{delta_expression}
\eeq
where $\mathsf{w}$ is the Weyl vector \eqref{eq:rho}. An affine translation by a co-root $\alpha^{\vee}$,
\[h \longrightarrow h + 2\pi \ri \alpha^{\vee}\,,\]  will leave $\delta$ (and therefore $\Delta$ and $\Delta^2$) invariant, since  $\langle\mathsf{w},\alpha^{\vee}\rangle$ and $\langle\beta,\alpha^{\vee} \rangle$, $\beta\in \RR^+$ are integers. It only remains to consider the effect of the $\bbZ$-action generated by \eqref{eq:afftr}: under this translation, 
we have
\[\delta \longrightarrow \re^{2\pi i \langle \mathsf{w}, \widehat{\omega}\rangle} \delta\,, \qquad \re^{(d_1 + \dots + d_l)x_{l+1}} \longrightarrow \re^{-2 \pi  i (d_1 + \dots + d_l)} \re^{(d_1 + \dots + d_l)x_{l+1}}\,. \]
Hence 
$\Delta$
is invariant, using \eqref{eq:rho} and the fact that
\beq 
\langle\mathsf{w},\widehat{\omega}\rangle = \sum_{i=1}^l d_i\,,
\label{eq:rhodi}
\eeq
since $d_i = \langle \omega_i,\widehat{\omega}\rangle$. To conclude that $\Delta^2\in\mathcal{A}$, it remains to prove boundedness of $\Delta$ in the limit \eqref{limit}. From \eqref{eq:rhodi}, the restriction of $\Delta$ to the locus in \eqref{limit} is
\[
c \re^{(d_1 + \dots + d_l)x_{l+1}} \re^{\langle \mathsf{w},h\rangle}\prod_{\beta\in \RR^+}\left( 1- \re^{-\langle\beta,h\rangle}\re^{-\langle\beta,\widehat{\omega} \rangle \tau}\right).
\]
This is bounded when $\tau \to +\infty$, since $\langle\beta,\widehat{\omega}\rangle \in \bbZ_{>0}$. It is finally immediate to see that $\Delta^2(x)$ is quasi-homogeneous of the claimed degree, since it is monomial in $\re^{x_{l+1}}$ with exponent $2(d_1 + \dots + d_l)$.   
\end{proof}

Define now $(2,1)$-tensors \[\ell_{\rm GW} \in \mathrm{H}^0\big(M_{\rm GW}, \mathrm{Sym}^2 \cT_{M_{\rm GW}} \otimes \cT^*_{M_{\rm GW}}\big)\,, \quad 
\ell_{\rm AW} \in \mathrm{H}^0\big(M_{\rm AW}, \mathrm{Sym}^2 \cT_{\cM_{\rm AW}} \otimes \cT^*_{\cM_{\rm AW}}\big)\,,
\]
by the following expressions in the chart $(y_1, \dots, y_{l+1})$:
\begin{align}
\l(\ell_{\rm GW}\r)^{\alpha\beta}_{\epsilon} \coloneqq & \sum_{a,b,i,j,k}\frac{\partial^3F_{\rm GW}}{\partial x_i \partial x_j \partial x_k}(\eta^\flat_{\rm AW})^{ia}\frac{\partial y_\alpha }{\partial x_a} (\eta^\flat_{\rm AW})^{jb}\frac{\partial y_\beta }{\partial x_b} \frac{\partial x_k}{\partial y_{\epsilon}} \,, \nn \\
\l(\ell_{\rm AW}\r)^{\alpha\beta}_{\epsilon} \coloneqq &
\sum_{L,M,N}\frac{\partial y_{\alpha}}{\partial t_L}\frac{\partial y_{\beta}}{\partial t_M}\frac{\partial t_N}{\partial y_{\epsilon}}(c_{\rm AW})^{LM}_N\,.
\label{eq:lhsrhs}
\end{align}
%
\begin{prop}
We have
  %
\beq 
\Delta^2 \l(\ell_{\rm GW}\r)^{\alpha\beta}_{\epsilon}
\in \cA \,, \qquad  \l(\ell_{\rm AW}\r)^{\alpha\beta}_{\epsilon}
\in \cA \,.
\label{eq:fourpol}
\eeq 
\label{prop:fourpol}
\end{prop}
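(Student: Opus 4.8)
The plan is to check, for each of the two tensors separately, the three conditions that by \cref{prop:Aring} characterise membership in $\cA$: $\widetilde{\cW}$-invariance, Fourier-polynomiality (absence of poles), and boundedness in the limit \eqref{limit}. The two cases are handled by rather different routes, which is the reason only $\ell_{\rm GW}$ needs the compensating factor $\Delta^2$.

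For $\ell_{\rm AW}$ the idea is that raising indices with the metric is a tensorial operation, so \eqref{eq:lhsrhs} may be rewritten intrinsically in the $(y_1,\dots,y_{l+1})$-chart as
\[
\l(\ell_{\rm AW}\r)^{\alpha\beta}_{\epsilon} = \sum_{a,b}\l(\eta_{\rm AW}\r)^{\alpha a}\l(\eta_{\rm AW}\r)^{\beta b}\l(c_{\rm AW}\r)_{ab\epsilon}\,,
\]
with all quantities now computed in the $y$-chart; I would then show each factor lies in $\cA$ and use that $\cA$ is a ring. The inverse Saito metric is in $\cA$ because $(\eta_{\rm AW})^{\alpha a}=\partial_{\widehat y}(\eta^\flat_{\rm AW})^{\alpha a}$, the intersection form $(\eta^\flat_{\rm AW})^{\alpha a}$ is the image of the constant pairing \eqref{eq:inter_form} under the Fourier-polynomial Jacobian $\partial y/\partial x$ and hence lies in $\cA$, while $\cA\simeq\bbC[y_1,\dots,y_l,\re^{y_{l+1}}]$ is stable under $\partial_{\widehat y}=\partial_{y_{\bar k}}$ since $\bar k\le l$. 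For $(c_{\rm AW})_{ab\epsilon}$ I would transform $\partial^3 F_{\rm AW}$ from the Saito chart using only the \emph{forward} Jacobian $\partial t/\partial y$, which lies in $\cA$ because the $t_A$ ($A\le l$) are polynomials in $y_1,\dots,y_l,\re^{y_{l+1}}$ by \cite[Cor.~2.5]{MR1606165}; no inverse Jacobian, hence no $\Delta$ in a denominator, ever enters. The only delicate point is the possible appearance of a \emph{bare} $y_{l+1}=t_{l+1}$ arising from the polynomial $t_{l+1}$-dependence of $F_{\rm AW}$ (axiom {\bf AW-III}); this is excluded because $c_{\rm AW}$ is an eigentensor of $\mathcal{L}_E$ with $E\propto\partial_{y_{l+1}}$ (by {\bf AW-I}), so its $y$-components are Euler-eigenfunctions and can depend on $y_{l+1}$ only through $\re^{y_{l+1}}$. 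Boundedness in \eqref{limit} is then inherited from that of $F_{\rm AW}$ and of the $t_A$.

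For $\ell_{\rm GW}$ the situation is genuinely different, since the product is presented in the linear $x$-chart where $\partial^3 F_{\rm GW}$ is the \emph{trigonometric} expression \eqref{eq:cBG}, and the transformation in \eqref{eq:lhsrhs} now contains the inverse Jacobian $\partial x_k/\partial y_\epsilon$. Invariance is immediate: $\ell_{\rm GW}$ is the $y$-chart avatar of a globally defined $(2,1)$-tensor, so its components are $\widetilde{\cW}$-invariant, and $\Delta^2$ is invariant by \cref{lem:Delta2inA}; the whole content is therefore the \emph{regularity} of $\Delta^2\ell_{\rm GW}$. As every ingredient is Fourier-rational with polar locus contained in the root hyperplanes $\{\Delta=0\}$, and a polar divisor is pure of codimension one, it suffices to bound the pole order at a generic point of each hyperplane $H_\beta:\langle\beta,h\rangle\in 2\pi\ri\bbZ$. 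Here two simple poles collide: the factor $\coth\l(\tfrac12\langle\beta,h\rangle\r)$ in \eqref{eq:cBG} gives a pole of order at most one, and the inverse Jacobian $\partial x_k/\partial y_\epsilon=\mathrm{cof}/\Delta$ gives another, because the Weyl-invariants $y_\alpha$ are even under the reflection $s_\beta$ so that $\Delta=\det(\partial y/\partial x)$ vanishes simply along $H_\beta$; the remaining factors (the constant $(\eta^\flat_{\rm AW})^{\bullet\bullet}=\xi$ and the regular $\partial y/\partial x$) are holomorphic. Thus $\ell_{\rm GW}$ has a pole of order at most two along each $H_\beta$, precisely cancelled by the double zero of $\Delta^2$, which also pins down $\Delta^2$ (rather than $\Delta$ or $\Delta^3$) as the correct normaliser. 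The quotient is then a finite Fourier sum (the division being exact), and boundedness in \eqref{limit} holds because along that ray $\coth\to\mathrm{const}$, $\Delta$ tends to a finite nonzero value by the computation underlying \cref{lem:Delta2inA}, and the Jacobian entries stay bounded; hence $\Delta^2\ell_{\rm GW}\in\cA$.

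I expect the main obstacle to be exactly this pole-order bookkeeping for $\ell_{\rm GW}$: one must confirm that the simple pole of the trigonometric three-point function and the simple pole of the inverse Jacobian are the only sources of non-regularity, and that they never combine to order greater than two along any component of the discriminant—including at the higher-codimension strata where several hyperplanes meet, which here is bypassed by the codimension-one nature of polar loci rather than by a direct local computation at those strata.
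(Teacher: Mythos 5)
Your treatment of the polynomiality and boundedness of $\Delta^2\ell_{\rm GW}$ tracks the paper's proof closely: one factor of $\Delta$ converts the inverse Jacobian $\partial x/\partial y$ into its (Fourier-polynomial) cofactor matrix, the other cancels the simple pole of the $\coth$ in \eqref{eq:cBG} along each root hyperplane, and boundedness in the limit \eqref{limit} follows from $\langle\beta,\widehat\omega\rangle>0$ together with the fact that $\Delta$ tends to a finite nonzero value there. The genuine gap is your dismissal of $\widetilde{\cW}$-invariance as ``immediate'' because $\ell_{\rm GW}$ is ``the $y$-chart avatar of a globally defined tensor''. The quantum product $c_{\rm GW}$ is a priori defined on $\mathrm{H}_{\mathsf T}(Z)\simeq\mathfrak{h}\oplus\bbC$, i.e.\ on the \emph{cover}, not on the orbit space $M_{\rm AW}=(\mathfrak{h}^{\rm reg}\times\bbC)/\widetilde{\cW}$; the assertion that it descends to the quotient, so that its components in the invariant chart are $\widetilde{\cW}$-invariant functions of $x$, is exactly the statement to be proven, not a starting point. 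The paper establishes it by contracting the linearly-acted-upon $x$-indices with invariant objects and then checking that $F_{\rm GW}$ itself is $\widetilde{\cW}$-invariant up to a quadratic shift; this uses the inversion identity \eqref{eq:invLi3} for $\mathrm{Li}_3$ to convert the sum over $\RR^+$ in \eqref{GW_potential} into a manifestly Weyl-invariant sum over all of $\RR$, with the classical cubic term absorbing the discrepancy. Without some version of this computation your argument is circular at this point.

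For $\ell_{\rm AW}$ you take a genuinely different, and viable, route: lowering indices so that only the forward Jacobian $\partial t/\partial y$ appears. The paper does not need this, because by \cite{MR1606165} the change of variables $t\leftrightarrow y$ is polynomial \emph{in both directions}, and the structure constants $(c_{\rm AW})^{LM}_N(t)$ are likewise polynomial in $y_1,\dots,y_l,\re^{y_{l+1}}$, so $(\ell_{\rm AW})^{\alpha\beta}_\epsilon\in\bbC[y_1,\dots,y_l,\re^{y_{l+1}}]$ directly and membership in $\cA$ is automatic, with no separate invariance or boundedness check. Your device for excluding a bare $y_{l+1}$ contains the one imprecise step: in the $y$-chart $E$ is \emph{not} proportional to $\partial_{y_{l+1}}$; from axiom {\bf AW-I} and \eqref{eq:yi} one has $\widehat d\,E=\partial_{y_{l+1}}+\sum_{\alpha\le l} d_\alpha y_\alpha\partial_{y_\alpha}$, so the eigenfunction argument must be run with the full quasi-homogeneity grading (as in \cref{prop:lqhom}) rather than with a plain $\partial_{y_{l+1}}$-eigenvalue. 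Alternatively one can simply quote the polynomiality of the structure constants from \cite{MR1606165}, as the paper does.
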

\begin{proof}
\vspace{-.75cm}
By 
\eqref{eq:Yi} and \eqref{eq:yi},
the entries of the Jacobian matrix (resp. its inverse),
\[ 
\cJ_{\alpha i} \coloneqq \frac{\partial y_{\alpha}}{\partial x_i}\,, \quad \mathrm{resp.}~ \cJ^{-1}_{i\alpha} = \frac{\partial x_i}{\partial y_{\alpha}}\,,
\]
are Laurent polynomials (resp. rational functions) in $(\re^{x_1},\dots, \re^{x_{l+1}})$. Since $\Delta$ is the determinant of the Jacobian matrix,  the expression
\[\Delta \frac{\partial x_k}{\partial y_{\epsilon}} \in \bbC[\re^{\pm x_1}, \dots, \re^{\pm x_{l+1}}] \] is again a Fourier polynomial in $(x_1,\dots, x_{l+1})$. Moreover, from \eqref{GW_potential}, 
the triple derivatives 
of the Gromov--Witten prepotential 
are rational functions in $\re^{\pm x_1},\dots, \re^{\pm x_{l+1}}$ with at most first order poles at 
$\delta=0$. Hence, 
\[\Delta^2 \l(\ell_{\rm GW}\r)^{\alpha\beta}_{\epsilon} \in \bbC[\re^{\pm x_1}, \dots, \re^{\pm x_{l+1}}] \]
is a Fourier polynomial in $(x_1,\dots, x_{l+1})$, as claimed. By \cref{lem:Delta2inA}, it remains to check that
the tensor components
$(\ell_{\rm GW})_\epsilon^{\alpha\beta}$  
are $\widetilde{\cW}$-invariant and bounded in the limit \eqref{limit}.
Let
 \beq
g^{\eta\alpha} \coloneqq \sum_{a,b}
\frac{\partial y_{\eta}}{\partial x_a}(\eta^\flat_{\rm AW})^{ab}\frac{\partial y_{\alpha}}{\partial x_b}\,.
\label{eq: g20iny}
\eeq

By \cite{MR1606165}, $g^{\eta\alpha}$ are elements of $\mathcal{A}$. Therefore, noticing that
\begin{align}
        \l(\ell_{\rm GW}\r)^{\alpha\beta}_{\epsilon}&= 
\sum_{i,j,k,a,b}        
        \frac{\partial^3F_{\rm GW}}{\partial x_i \partial x_j \partial x_k}(\eta^\flat_{\rm AW})^{ia}\frac{\partial y_\alpha }{\partial x_a} (\eta^\flat_{\rm AW})^{jb}\frac{\partial y_\beta }{\partial x_b} \frac{\partial x_k}{\partial y_{\epsilon}} \nonumber\\
        &= \sum_{i,j,k,\delta,\eta}   \left(\frac{\partial^3F_{\rm GW}}{\partial x_i \partial x_j \partial x_k} \frac{\partial x_i}{\partial y_{\eta}}\frac{\partial x_j}{\partial y_{\delta}} \frac{\partial x_k}{\partial y_{\epsilon}} \right)g^{\eta\alpha}g^{\delta\beta}\,, 
\label{eq: lhsGW}
\end{align}
it suffices to check that  
\[\sum_{i,j,k}\frac{\partial^3F_{\rm GW}}{\partial x_i \partial x_j \partial x_k} \frac{\partial x_i}{\partial y_{\eta}}\frac{\partial x_j}{\partial y_{\delta}} \frac{\partial x_k}{\partial y_{\epsilon}}   \]
is $\widetilde{\cW}$-invariant. Since $y_\alpha$ is $\widetilde{\cW}$-invariant for $1\leqslant \alpha \leqslant l$, $y_{l+1}= x_{l+1}$, and the coordinates $x_i$ which are linearly-acted-upon by $\widetilde{\cW}$ are contracted, verifying the $\widetilde{\cW}$-invariance of the last expression amounts to checking the $\widetilde{\cW}$-invariance of the Gromov--Witten potential $F_{\rm GW}$, up to an additive quadratic shift in~$x$. Recall that 
    \beq \mathrm{Li}_3(\re^z) = c_3(z) + \mathrm{Li}_3(\re^{-z})\,, 
    \label{eq:invLi3}
    \eeq
where $c_3(z)$ is a cubic polynomial in $z$, as follows from integrating both sides of the geometric series identity $\mathrm{Li}_0(\re^z) = 1 - \mathrm{Li}_0(\re^{-z}).$ 
Expanding the sum over positive roots in \eqref{GW_potential} to a sum over all roots using in \eqref{eq:invLi3},
we have
\[F_{\rm GW} = D_3(x) + 2\nu \sum_{\beta\in \mathcal{R}} \mathrm{Li}_3(\re^{-\langle \beta,h\rangle})\,\]
for a cubic polynomial $D_3(x)$. It is straightforward to verify that $D_3(x)$ is Weyl-invariant, 
hence $F_{\rm GW}$ is Weyl-invariant. The invariance under the extended affine action is trivial, since for each $\beta$, $\re^{\langle \beta,h\rangle}$ is invariant under the corresponding translation in $\widetilde{\cW}$. \\
It only remains to verify the boundedness property. Under the limit \eqref{limit}, 
since the entries of the Jacobian matrix $\cJ_{\alpha i}$ are bounded, so are the coefficients of its inverse $\cJ^{-1}_{i \alpha}$, and we have already shown that $\Delta$ is bounded. As for the triple derivatives of the Gromov--Witten potential, boundedness is trivial when either of $i,j,k=l+1$ by the string equation and \eqref{eq:cBG}. For $i,j,k\leq l$, we have
\beq
\frac{\partial^3 F_{\rm GW}}{\partial x_{i}\partial x_{j}\partial x_{k}} = c_{ijk} + \sum_{\beta\in \RR^+} d_{ijk}(\beta)\frac{e^{-\langle  \beta, h \rangle}}{1-e^{-\langle \beta,h \rangle}}\,,
\label{eq:3derGWijk}
\eeq
for constants $c_{ijk}$, $d_{ijk}(\beta)$, with $\beta\in \RR^+$. Since $\bra \beta, \widehat{\omega}\ket>0$, we have 
\[\lim_{\tau \to +\infty} |\re^{-\bra \beta, x_0 +\widehat{\omega} \tau \ket}| =  0\,,\]
hence \eqref{eq:3derGWijk} is bounded at infinity.

As for $\ell_{\rm AW}$, by \cite{MR1606165,Brini:2021pix}, recall that when $1\leq A\leq l$, $t_A(y)$ is a polynomial in $y_1,\dots,y_l$ and $\re^{y_{l+1}}$ with polynomial inverse $y_i(t)$, and $t_{l+1} = y_{l+1}$. Moreover, $(c_{\rm AW})^{lm}_n(t)$ are polynomials in $y_1,\dots,y_l$ and $\re^{y_{l+1}}$. Therefore, \[
(\ell_{\rm AW})^{\alpha\beta}_{\epsilon}(y) \in \bbC[y_1,\dots,y_l,\re^{y_{l+1}}]\,.\] 
The claim then follows as $y_1(x), \dots, y_l(x)$ and $\re^{y_{l+1}}$ are generators of $\cA$.
\end{proof}

The square Jacobian factor $\Delta^2$ in $\Delta^2 \ell_{\rm GW}$ entered our discussion so far only in order to 
offset the potential double poles of $\ell_{\rm GW}$ along the discriminant. However, the equality \eqref{eq:AD2} that we need to prove, $\ell_{\rm GW}= \ell_{\rm AW}$, would lead to the stronger expectation that $\ell_{\rm GW}$ is in fact regular at $\delta =0$. We now show that this is indeed the case. We start by proving the following
\begin{lem}
Let
\[
\bary{rcl}
m : \Lambda_w(\RR) & \longrightarrow  & \bbC \\
\omega & \longrightarrow & m_\omega
\eary
\]
be a complex-valued map on the weight lattice having finite support and Weyl-invariant fibres,
\[
m_{w(\omega)}  = m_{\omega} \quad \forall\, w \in \cW\,, \qquad \big|\{\omega \in \Lambda_w(\RR) | m_\omega \neq 0\}\big |<\infty\,, 
\]
and consider the  Weyl-invariant Fourier polynomial
\[\mathfrak{p}(x) \coloneqq \sum_{\omega \in \Lambda_w(\RR)} m_{\omega} \re^{\langle \omega , x \rangle} \in \bbC[\re^{\pm x_1}, \dots, \re^{\pm x_l}]^\cW\,.
\]
Then, for all $\beta \in \Lambda_r(\RR)$, the directional derivative of $\mathfrak{p}(x)$ along $\beta$ is divisible  by $(1-\re^{\pm \langle\beta,h \rangle})$ in the ring of Fourier polynomials,
\[ {\sum_{a=1}^l\langle\beta,\omega_a\rangle\frac{\partial \mathfrak{p}(x)}{\partial x_a}} \in {(1-\re^{\pm \langle\beta,h \rangle})}\, \bbC[\re^{\pm x_1}, \dots, \re^{\pm x_l}]\,. \]
\label{lem: regular}
\end{lem}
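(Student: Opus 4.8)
The plan is to make the claimed divisibility transparent by passing to the group algebra of the weight lattice, reducing it to a coset-sum condition, and then exploiting the Weyl reflection $s_\beta$ to kill the relevant sums by antisymmetry.

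First I would compute the operator in the statement explicitly. Since $h=\sum_b x_b\alpha_b^\vee$ and $\langle\omega_a,\alpha_b^\vee\rangle=\delta_{ab}$, we have $\partial_{x_a}\re^{\langle\omega,h\rangle}=\langle\omega,\alpha_a^\vee\rangle\re^{\langle\omega,h\rangle}$, and the contraction telescopes to
\[
\sum_{a=1}^l\langle\beta,\omega_a\rangle\,\partial_{x_a}\re^{\langle\omega,h\rangle}=\langle\omega,\beta\rangle\,\re^{\langle\omega,h\rangle},
\]
so the operator is the honest directional derivative $D_\beta$ along $\beta$, giving
\[
D_\beta\mathfrak{p}=\sum_{\omega}m_\omega\,\langle\omega,\beta\rangle\,\re^{\langle\omega,h\rangle}.
\]
Next I would identify $\bbC[\re^{\pm x_1},\dots,\re^{\pm x_l}]$ with the group algebra $\bbC[\Lambda_w]$ via $\re^{x_a}=\re^{\langle\omega_a,h\rangle}=e^{\omega_a}$, and note that $(1-\re^{\langle\beta,h\rangle})$ and $(1-\re^{-\langle\beta,h\rangle})$ differ by the unit $-\re^{\langle\beta,h\rangle}$, so the two divisibility claims coincide; I work with $e^\beta\coloneqq\re^{\langle\beta,h\rangle}$. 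The ring isomorphism $\bbC[\Lambda_w]/(1-e^\beta)\cong\bbC[\Lambda_w/\bbZ\beta]$ then translates divisibility of $D_\beta\mathfrak{p}$ by $(1-e^\beta)$ into the requirement that, for every coset $C$ of $\bbZ\beta$ in $\Lambda_w$,
\[
\Sigma_C\coloneqq\sum_{\omega\in C}m_\omega\,\langle\omega,\beta\rangle=0.
\]

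The key step is to realise each $\Sigma_C$ vanishes by a reflection antisymmetry. Taking $\beta$ a root, the reflection $s_\beta(\mu)=\mu-\langle\mu,\beta^\vee\rangle\beta$ lies in $\cW$; since $\langle\mu,\beta^\vee\rangle\in\bbZ$ for every $\mu\in\Lambda_w$, it maps each coset $C$ to itself, while $s_\beta\beta=-\beta$ gives $\langle s_\beta\mu,\beta\rangle=-\langle\mu,\beta\rangle$. Because the fibres of $m$ are Weyl-invariant, $m_{s_\beta\mu}=m_\mu$, so the involution $\mu\mapsto s_\beta\mu$ of $C$ pairs each summand of $\Sigma_C$ with its negative, and the fixed points satisfy $\langle\mu,\beta\rangle=0$ and contribute nothing. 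Hence $\Sigma_C=0$ for every coset, which is the desired divisibility. This is precisely the case that feeds back into \eqref{eq:3derGWijk}, where the simple poles at $\delta=0$ are indexed by $\beta\in\RR^+$.

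The main obstacle is the coset-preservation needed for the reflection to act within a single $\Sigma_C$: the whole argument hinges on $s_\beta$ being a genuine, lattice-preserving element of $\cW$, equivalently on the integrality $\langle\mu,\beta^\vee\rangle\in\bbZ$. This is where $\beta$ being a root is what makes the symmetry available, since it guarantees simultaneously that $s_\beta\in\cW$ (so that $m$ is $s_\beta$-invariant) and that $s_\beta$ shifts weights by integer multiples of $\beta$. I would therefore organise the write-up so that the reduction to the coset-sum condition $\Sigma_C=0$ is stated first, and the reflection symmetry is then applied along the root directions governing the pole locus $\delta=\prod_{\beta\in\RR^+}(\re^{\langle\beta,h\rangle/2}-\re^{-\langle\beta,h\rangle/2})$, which is all that \cref{prop:fourpol} and the regularity of $\ell_{\rm GW}$ at $\delta=0$ actually require.
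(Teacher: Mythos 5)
Your argument is correct and rests on the same mechanism as the paper's own proof: pairing $\omega$ with $s_\beta(\omega)$, using the Weyl-invariance $m_{s_\beta(\omega)}=m_\omega$ together with the antisymmetry $\langle s_\beta(\omega),\beta\rangle=-\langle\omega,\beta\rangle$, and exploiting the integrality $\langle\omega,\beta^\vee\rangle\in\bbZ$. The only real difference is packaging: the paper splits the sum into the two half-lattices $\Gamma_\pm(\beta)$ and certifies divisibility by the explicit factorisation $1-\re^{-n\langle\beta,h\rangle}=(1-\re^{-\langle\beta,h\rangle})(1+\re^{-\langle\beta,h\rangle}+\cdots)$ with $n=\langle\omega,\beta^\vee\rangle\in\bbZ_{>0}$, whereas you pass to the quotient $\bbC[\Lambda_w]/(1-\re^{\langle\beta,h\rangle})\cong\bbC[\Lambda_w/\bbZ\beta]$ and check that each coset sum $\Sigma_C$ vanishes, with the same integrality now appearing as coset preservation; your version handles the $s_\beta$-fixed weights automatically and is, if anything, slightly cleaner. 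You are also right to insist that the argument needs $\beta$ to be an actual root rather than an arbitrary element of $\Lambda_r(\RR)$, since only then is $s_\beta\in\cW$ and hence $m$ invariant under it; the paper's proof tacitly makes the same assumption, and the lemma is only ever applied with $\beta\in\RR^+$, so nothing is lost.
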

\begin{proof}
Since 
\[\frac{\partial \mathfrak{p}(x)}{\partial x_a} = \sum_{\omega\in \Lambda_w(\RR)} m_{\omega} \langle \omega,\alpha^\vee_a\rangle\re^{\langle \omega , h \rangle}\,, \qquad \beta = \sum_a\langle\beta,\omega_a\rangle \alpha^\vee_a\,,\] 
we have that
\beq
\sum_{a=1}^l \langle\beta,\omega_a\rangle\frac{\partial \mathfrak{p}(x)}{\partial x_a} = \sum_{\omega\in \Lambda_w(\RR)} m_{\omega} \langle \beta,\omega\rangle\re^{\langle \omega , h \rangle}.
\label{eq: nopole1}
\eeq
Let now $\beta \in \Lambda_r(\RR)$ and
\[\Gamma_\pm(\beta) \coloneqq \l\{\omega\in \Lambda_w(\RR)\, \big|\, \mathrm{sgn}\l(\langle \beta,\omega \rangle \r) = \pm 1 \r\}\,.
\]
Since $\langle \beta,s_{\beta}(\omega)\rangle = - \langle \beta,\omega\rangle,$
the Weyl reflection across $\beta^\perp$ gives a bijection
\[s_{\beta}:\Gamma_\pm(\beta)\to \Gamma_\mp(\beta)\,.\] Moreover, using $m_{s_{\beta}(\omega)} = m_{\omega}$, we can rewrite \eqref{eq: nopole1} as
\[\sum_{\omega\in \Gamma_+(\beta)} m_{\omega} \langle \beta,\omega\rangle (\re^{\langle \omega , h \rangle} - \re^{\langle s_{\beta}(\omega) , h \rangle} ) = 
\sum_{\omega\in \Gamma_+(\beta)} m_{\omega} \langle \beta,\omega\rangle \re^{\langle \omega , h \rangle} (1- \re^{- \frac{2\langle \omega,\beta \rangle \langle \beta , h \rangle}{\langle \beta,\beta \rangle} })\,.
\]
As 
$\frac{2\langle \omega,\beta \rangle}{\langle \beta,\beta \rangle}$ is 
a (positive) integer, $(1- \re^{\pm \langle \beta , h  \rangle})$ divides the r.h.s. in $\bbC[\re^{\pm x_1}, \dots, \re^{\pm x_l}]$\,.
\end{proof}

\begin{prop}
$\l(\ell_{\rm GW}\r)^{\alpha\beta}_{\epsilon}\in \cA$.
\label{prop:lGWA}
\end{prop}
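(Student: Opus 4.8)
Here is how I would prove \cref{prop:lGWA}.

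\medskip

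The plan is to upgrade \cref{prop:fourpol}, which only gives $\Delta^2 \l(\ell_{\rm GW}\r)^{\alpha\beta}_{\epsilon}\in\cA$, to the assertion that $\l(\ell_{\rm GW}\r)^{\alpha\beta}_{\epsilon}$ itself has no poles along the discriminant $\{\delta=0\}$. Since the ambient Laurent ring $\bbC[\re^{\pm x_1},\dots,\re^{\pm x_{l+1}}]$ is a unique factorisation domain and, up to an invertible factor, $\Delta^2=\prod_{\beta\in\RR^+}(1-\re^{-\bra\beta,h\ket})^2$ with factors attached to the \emph{distinct} reflection hyperplanes $H_\beta\coloneqq\{\bra\beta,h\ket=0\}$, it is enough to show that $\Delta^2\ell_{\rm GW}$ vanishes to order $\geq\mathrm{ord}_{H_{\beta_0}}\Delta^2=2$ at a generic point of each $H_{\beta_0}$, $\beta_0\in\RR^+$; the $\widetilde{\cW}$-invariance and boundedness in the limit \eqref{limit} will then be inherited from those of $\Delta^2\ell_{\rm GW}$ already proved in \cref{prop:fourpol}.

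First I would fix $\beta_0\in\RR^+$, work near a generic point of $H_{\beta_0}$ with transverse coordinate $s\coloneqq\bra\beta_0,h\ket$, and isolate the two singular factors in $\l(\ell_{\rm GW}\r)^{\alpha\beta}_{\epsilon}=\sum_{i,j,k}\frac{\partial^3F_{\rm GW}}{\partial x_i\partial x_j\partial x_k}M^\alpha_i M^\beta_j\,\frac{\partial x_k}{\partial y_\epsilon}$, where $M^\alpha_i\coloneqq\sum_a(\eta^\flat_{\rm AW})^{ia}\partial_{x_a}y_\alpha$ are regular. By \eqref{eq:cBG} the third derivative $\partial^3_{x_ix_jx_k}F_{\rm GW}$ has a simple pole at $s=0$ with residue proportional to $\bra\alpha_i,\beta_0\ket\bra\alpha_j,\beta_0\ket\bra\alpha_k,\beta_0\ket$ (and no pole as soon as one index equals $l+1$, since $F^+_{\rm GW}$ is independent of $x_{l+1}$). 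Because $y_1,\dots,y_l$ are $s_{\beta_0}$-invariant, hence functions of $s^2$, the inverse-Jacobian entry $\partial x_k/\partial y_\epsilon$ also has a simple pole, whose polar part is proportional to the normal component $(\beta_0^\vee)_k$.

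The cancellation of the resulting, a priori double, pole will rest on two consequences of Weyl invariance. The first is that $\sum_i\bra\alpha_i,\beta_0\ket M^\alpha_i$ equals, up to sign, the directional derivative $\partial_{\beta_0}y_\alpha$, which vanishes on $H_{\beta_0}$: indeed $\partial_{\beta_0}$ of the $s_{\beta_0}$-invariant $y_\alpha$ is $s_{\beta_0}$-anti-invariant, equivalently \cref{lem: regular} gives divisibility by $(1-\re^{-\bra\beta_0,h\ket})$. Contracting the (normal-in-each-slot) residue of $\partial^3F_{\rm GW}$ against the two $M$'s then produces a factor $(\partial_{\beta_0}y_\alpha)(\partial_{\beta_0}y_\beta)=O(s^2)$, which kills the double pole and all but one of the simple-pole contributions. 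The remaining simple-pole term is governed by the regular part of $\partial_{x_i}\partial_{x_j}\partial_{\beta_0}F_{\rm GW}$ contracted with $M^\alpha_i M^\beta_j$, and I expect this to be the main obstacle. Here I would use the second consequence: as $F_{\rm GW}$ is $\cW$-invariant (shown in the proof of \cref{prop:fourpol}), the derivative $\partial_{\beta_0}F_{\rm GW}$ is $s_{\beta_0}$-anti-invariant and hence vanishes identically on the fixed locus $H_{\beta_0}$; since $\mathrm{grad}^{\flat}y_\alpha,\mathrm{grad}^{\flat}y_\beta$ are $\eta^\flat_{\rm AW}$-orthogonal to $\beta_0$ by the first consequence, hence tangent to the flat hyperplane $H_{\beta_0}$, the contraction equals the intrinsic Hessian along $H_{\beta_0}$ of a function vanishing on $H_{\beta_0}$, and is therefore zero.

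Assembling these vanishings shows that both the order-two and order-one parts of the Laurent expansion of $\l(\ell_{\rm GW}\r)^{\alpha\beta}_{\epsilon}$ in $s$ vanish, so $\ell_{\rm GW}$ is regular at generic points of every $H_{\beta_0}$. Combined with $\Delta^2\ell_{\rm GW}\in\cA$ and unique factorisation, this forces $\Delta^2$ to divide $\Delta^2\ell_{\rm GW}$, whence $\l(\ell_{\rm GW}\r)^{\alpha\beta}_{\epsilon}\in\bbC[\re^{\pm x_1},\dots,\re^{\pm x_{l+1}}]$; together with the invariance and boundedness from \cref{prop:fourpol} this gives $\l(\ell_{\rm GW}\r)^{\alpha\beta}_{\epsilon}\in\cA$, as claimed.
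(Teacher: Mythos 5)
Your argument is correct, but it takes a genuinely different route from the paper's. You work locally at a generic point of each reflection hyperplane $H_{\beta_0}$ and kill the a priori double pole of $\left(\ell_{\rm GW}\right)^{\alpha\beta}_{\epsilon}$ order by order: the $s^{-2}$ term dies because the residue of $\partial^3 F_{\rm GW}$ factorises through $\langle\alpha_i,\beta_0\rangle$ in each slot and contracts against $\partial_{\beta_0}y_\alpha,\partial_{\beta_0}y_\beta=O(s^2)$, and the surviving $s^{-1}$ term is identified with the tangential Hessian of the $s_{\beta_0}$-anti-invariant function $\partial_{\beta_0}F_{\rm GW}$, which vanishes on its zero locus $H_{\beta_0}$. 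The paper instead never performs a second-order computation: it restricts to $x_{l+1}=0$, observes that $\delta^2 L^{\alpha\beta}_{\epsilon}$ is Weyl-invariant so that $\delta L^{\alpha\beta}_{\epsilon}$ is anti-invariant, and invokes Bourbaki's bijection (multiplication by $\delta$ maps invariant Fourier polynomials onto anti-invariant ones) to reduce the whole problem to showing that $L^{\alpha\beta}_{\epsilon}$ has at most a \emph{simple} pole; that single order of vanishing is exactly your first cancellation (the adjugate absorbing the Jacobian pole, plus \cref{lem: regular} applied to $y_\alpha$). The trade-off: the paper's route is shorter and sidesteps the delicate points your argument must address -- the rank-one structure of the adjugate of $\mathcal{J}$ along the wall (which is what makes the polar part of $\partial x_k/\partial y_\epsilon$ proportional to the normal direction), the fact that the walls are affine-linear in the $x$-chart so that the "intrinsic Hessian" of a function vanishing on $H_{\beta_0}$ along constant tangent vectors is well-defined and zero, and the mild non-smoothness of $\partial_{\beta_0}F_{\rm GW}$ at the wall (the $\mathrm{Li}_2$ terms from $\beta=\pm\beta_0$ depend only on $s$ and are annihilated by tangential derivatives, so this is harmless at a generic wall point). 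Your route, in exchange, is more elementary -- it needs no input beyond Weyl anti-invariance of directional derivatives -- and it exhibits the geometric mechanism of the cancellation explicitly rather than delegating one order of it to the invariant/anti-invariant divisibility theorem.
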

\begin{proof}
From \eqref{eq:yi}, \eqref{GW_potential} and \eqref{eq:D}, $\l(\ell_{\rm GW}\r)^{\alpha\beta}_{\epsilon}$ is a polynomial in $\re^{x_{l+1}}$. Let then
    \[
    L^{\alpha\beta}_{\epsilon} \coloneqq \l(\ell_{\rm GW}\r)^{\alpha\beta}_{\epsilon}\big|_{x_{l+1}=0}\,.
    \]
%
The statement would follow from showing that $L^{\alpha\beta}_{\epsilon}$, which {\it a priori} has double poles along $\delta=0$, is in fact a Fourier polynomial in $(x_1,\dots, x_{l})$.  By  \cref{prop:lqhom}, $\delta^2L^{\alpha\beta}_{\epsilon}$ is Weyl-invariant, and therefore $\delta L^{\alpha\beta}_{\epsilon}$ is Weyl anti-invariant. By Bourbaki \cite[Ch.~VI, Sec.3, Prop.~2(iii)]{B02}, the multiplication by $\delta$ is a bijection from the set of Weyl-invariant Fourier polynomials onto the set of Weyl anti-invariant Fourier polynomials. It then suffices to show that 
 \[\delta L^{\alpha\beta}_{\epsilon} \in \bbC[\re^{\pm x_1}, \dots, \re^{\pm x_l}]\,.
 \]
  From \eqref{eq:lhsrhs}, we have
\[ \delta L^{\alpha\beta}_{\epsilon}= 
\sum_{i,j,k,a,b}\frac{\partial^3F_{\rm GW}}{\partial x_i \partial x_j \partial x_k}(\eta^\flat_{\rm AW})^{ia} (\eta^\flat_{\rm AW})^{jb}\cJ_{\alpha a}  \cJ_{\beta b} \cJ^{-1}_{k \epsilon}\, \delta  \bigg|_{x_{l+1}=0}\,,\]
where the r.h.s. has in principle simple poles at $\delta=0$. 
Since $\Delta= \det \cJ$, $\delta  \cJ^{-1}_{k \epsilon}\bigg|_{x_{l+1}=0}$ is regular $\delta =0$, therefore it would be sufficient to show that
\[\sum_{i,a}\frac{\partial^3F_{\rm GW}}{\partial x_i \partial x_j \partial x_k}(\eta^\flat_{\rm AW})^{ia}\cJ_{\alpha a}\bigg|_{x_{l+1}=0}\]
is pole-free at $\delta =0$. By \eqref{GW_potential}, 
the triple derivatives of the prepotential are pole-free along the discriminant unless $i,j,k\leq l$, in which case\footnote{
For $\mathrm{A}_l$ with a general 2-torus action, the expression is the same upon relacing $2\nu \rightarrow \nu_1+\nu_2$.
}
\begin{align*}
    \frac{\partial^3F_{\rm GW}}{\partial x_i \partial x_j \partial x_k} 
    &=c_{ijk} + 2\nu \sum_{\beta\in \mathcal{R}^+}\langle\beta,\alpha^\vee_i\rangle \langle\beta,\alpha^\vee_j\rangle \langle\beta,\alpha^\vee_k\rangle\frac{1}{1- \re^{\langle \beta,h \rangle}}\,.
\end{align*}
By \cref{lem: regular} with $\mathfrak{p}(x)=y_\alpha(x)$, we have that, for all $\beta\in \RR^+$,
$
\sum_a\langle\beta,\omega_a\rangle \cJ_{\alpha a}
$
has a simple zero at $\bra \beta, h\ket =0$. Hence, using  
$(\eta^\flat_{\rm AW}){}_{ab}= -C_{ab}$ for $a,b,\leq l$ by \eqref{eq:inter_form}, the expression
\[\frac{1}{1-\re^{\langle\beta,h \rangle}}\sum_{i,a} \langle \beta,\alpha^\vee_i \rangle (C^{-1})_{ia}\cJ_{\alpha a}\]
is regular on the discriminant $\delta = 0$, concluding the proof.
 \end{proof}
 

\begin{prop}For any $\alpha,\beta,\epsilon$, 
    $\l(\ell_{\rm GW}\r)^{\alpha\beta}_{\epsilon}$ and $\l(\ell_{\rm AW}\r)^{\alpha\beta}_{\epsilon}$ are quasi-homogeneous polynomials in $y_1,\dots,y_l$ and $\re^{y_{l+1}}$ of degree $d_{\alpha} + d_{\beta}- d_{\epsilon}$.
    \label{prop:lqhom}
\end{prop}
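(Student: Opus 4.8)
The plan is to reduce the entire statement to a weight computation under the Euler operator $\mathcal{E} \coloneqq \partial_{x_{l+1}}$. As the computation $\partial_{x_{l+1}} y_\alpha = d_\alpha y_\alpha$, $\partial_{x_{l+1}}\re^{y_{l+1}} = \re^{y_{l+1}}$ shows, $\mathcal{E}$ acts on $\cA$ as the Euler derivation of the grading, so a Fourier polynomial lies in the degree-$p$ graded piece iff it is an $\mathcal{E}$-eigenfunction of eigenvalue $p$. The membership in $\cA$ of the two tensor fields is already available (\cref{prop:fourpol} gives $(\ell_{\rm AW})^{\alpha\beta}_\epsilon \in \cA$ and $\Delta^2(\ell_{\rm GW})^{\alpha\beta}_\epsilon \in \cA$, and \cref{prop:lGWA}, whose proof uses only the weight computation below, upgrades the latter to $(\ell_{\rm GW})^{\alpha\beta}_\epsilon \in \cA$), so no circularity arises and it remains only to determine the $\mathcal{E}$-eigenvalues. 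I will record the bookkeeping rule that the derivation $\partial/\partial z$ in an $\mathcal{E}$-eigencoordinate $z$ of weight $w_z$ shifts weight by $-w_z$: thus $\partial/\partial t_A$ and $\partial/\partial y_\epsilon$ lower weights by $d_A$ and $d_\epsilon$, while each $\partial/\partial x_i$ with $i\le l$ preserves weight (it commutes with $\mathcal{E}$). The weights I will draw on are $\deg t_A = d_A$, $\deg y_\alpha = d_\alpha$ and $\deg(\eta^\flat_{\rm AW})^{\alpha\beta} = d_\alpha + d_\beta$ from \cite{MR1606165}, together with $\partial_{x_{l+1}}\Delta = (d_1+\dots+d_l)\Delta$ from \eqref{disc}.

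For $(\ell_{\rm GW})^{\alpha\beta}_\epsilon$ I will start from the factorisation \eqref{eq: lhsGW}, writing it as $\sum_{\eta,\delta}\big(\sum_{i,j,k}\partial^3_{x_ix_jx_k}F_{\rm GW}\,\tfrac{\partial x_i}{\partial y_\eta}\tfrac{\partial x_j}{\partial y_\delta}\tfrac{\partial x_k}{\partial y_\epsilon}\big)\,g^{\eta\alpha}g^{\delta\beta}$, where $g^{\eta\alpha}$ is the intersection form in the $y$-chart, of weight $d_\eta + d_\alpha$. Three observations then close the case: (i) every third derivative $\partial^3_{x_ix_jx_k}F_{\rm GW}$ is independent of $x_{l+1}$ — manifest from \eqref{GW_potential}/\eqref{eq:cBG}, whose non-constant entries are functions of $x_1,\dots,x_l$ alone — hence has weight $0$; (ii) the inverse-Jacobian entry $\partial x_k/\partial y_\epsilon$ has weight $-d_\epsilon$, obtained from Cramer's rule, since its cofactor is a sum of products of entries $\partial y_\eta/\partial x_i$ of weight $d_\eta$ over all rows $\eta\neq\epsilon$ (total weight $\sum_{\eta\neq\epsilon}d_\eta=(d_1+\dots+d_l)-d_\epsilon$) divided by $\det\cJ=\Delta$ of weight $d_1+\dots+d_l$; (iii) combining, the parenthesised factor has weight $-(d_\eta+d_\delta+d_\epsilon)$, and once the weights of $g^{\eta\alpha}g^{\delta\beta}$ are added the $\eta,\delta$-dependence cancels, leaving $d_\alpha+d_\beta-d_\epsilon$.

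For $(\ell_{\rm AW})^{\alpha\beta}_\epsilon$ I will argue invariantly: by \eqref{eq:lhsrhs} and \eqref{eq:cflat} this is the $y$-chart component, with $\alpha,\beta$ raised and $\epsilon$ lowered, of the $(2,1)$-tensor $\mathbf{c}\coloneqq(c_{\rm AW})^{\bullet\bullet}_\bullet$ gotten from the product by raising two slots with the Saito metric $\eta_{\rm AW}$. Since $\cM_{\rm AW}$ is conformal of charge $d=1$, the relations $\mathcal{L}_E\circ=\circ$ and $\mathcal{L}_E\eta_{\rm AW}=(2-d)\eta_{\rm AW}$ give $\mathcal{L}_E\mathbf{c}=\big[(3-d)+2(d-2)\big]\mathbf{c}=(d-1)\mathbf{c}=0$; as $\mathcal{E}=\widehat{d}\,E$ this means $\mathcal{L}_{\mathcal{E}}\mathbf{c}=0$, and from $\mathcal{L}_{\mathcal{E}}\partial_{y_\alpha}=-d_\alpha\partial_{y_\alpha}$, $\mathcal{L}_{\mathcal{E}}\rd y^\epsilon=d_\epsilon\,\rd y^\epsilon$ (valid for all indices, using $d_{l+1}=0$), a scaling-invariant $(2,1)$-tensor has $y$-components of weight exactly $d_\alpha+d_\beta-d_\epsilon$. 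Alternatively one may track weights directly through \eqref{eq:cflat}, using that $\eta^{LP}_{\rm AW}$ is constant in the flat chart $t$ and the conformal scaling of $F_{\rm AW}$; the two routes must give the same answer, consistently with the eventual equality $\ell_{\rm GW}=\ell_{\rm AW}$.

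The genuinely delicate step is (ii): pinning down the weight $-d_\epsilon$ of the inverse Jacobian uniformly in $k$, which is exactly where the anti-invariant discriminant $\Delta$ and its weight from \cref{lem:Delta2inA} enter. Everything else is either a direct read-off of weights from \cite{MR1606165} or a one-line consequence of conformality; the main point to get right is the sign and normalisation in the conformal scaling law, so that the two independent computations — the explicit Jacobian one for the non-conformal dual $\ell_{\rm GW}$ and the invariant one for $\ell_{\rm AW}$ — both land on $d_\alpha+d_\beta-d_\epsilon$.
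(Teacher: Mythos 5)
Your proof is correct, and its overall architecture --- defer polynomiality to \cref{prop:fourpol,prop:lGWA} and then pin down the degree by a weight count under the Euler derivation $\partial_{x_{l+1}}$ --- is the same as the paper's. The $\ell_{\rm GW}$ half coincides with the paper's argument: both start from the factorisation \eqref{eq: lhsGW}, use $\deg g^{\eta\alpha}=d_\eta+d_\alpha$ and the $x_{l+1}$-independence of the third derivatives of $F_{\rm GW}$; your Cramer's-rule derivation of $\deg(\partial x_k/\partial y_\epsilon)=-d_\epsilon$ simply makes explicit a step the paper leaves implicit. The one genuine divergence is in the $\ell_{\rm AW}$ half: the paper tracks degrees term by term through \eqref{eq:lhsrhs}, quoting from Dubrovin--Zhang that $t_A$ has degree $d_A$ and $(c_{\rm AW})^{LM}_N$ has degree $d_L+d_M-d_N$, whereas you derive the same conclusion invariantly from conformality via $\mathcal{L}_E\mathbf{c}=\bigl[(3-d)+2(d-2)\bigr]\mathbf{c}=0$ at $d=1$. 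The two routes are equivalent (the quoted degree of $(c_{\rm AW})^{LM}_N$ is itself a consequence of the conformal axiom), but yours is self-contained and makes the role of $d=1$ transparent. Two small points in your favour: your sign convention $\mathcal{L}_E\eta_{\rm AW}=(2-d)\eta_{\rm AW}$ is the correct one --- it is the convention consistent with the quasi-homogeneity $(3-d)F$ of the prepotential and with {\bf AW-I}, whereas {\bf FM6} as printed reads $(d-2)\eta$, with which your Lie-derivative count would give $(3-d)\mathbf{c}\neq 0$; and your parenthetical defusing of the apparent circularity with \cref{prop:lGWA} is apt, since that proof needs only the Weyl-invariance of $\delta^2 L^{\alpha\beta}_\epsilon$ supplied by \cref{prop:fourpol}, not the homogeneity statement being proved here.
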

\begin{proof}
The polynomiality statement follows from \cref{prop:Aring,prop:fourpol,prop:lGWA}.
The homogeneity property for $\ell_{\rm GW}$ can be read off from \eqref{eq: lhsGW}: firstly, from \eqref{eq: g20iny}, $g^{\eta\delta}$ and $g^{\delta\beta}$ are quasi-homogeneous of degree $d_{\eta} + d_{\delta}$ and $d_{\delta} + d_{\beta}$ respectively. The triple derivatives of $F_{\rm GW}$ are quasi-homogeneous of degree zero, and therefore each nonzero term of  $\l(\ell_{\rm GW}\r)^{\alpha\beta}_{\epsilon}$ has the same 
degree 
$d_{\alpha} + d_{\beta}- d_{\epsilon}.$
As for $\ell_{\rm AW}$, by \cite{MR1606165},  $t_A$ is a quasi-homogeneous polynomial of degree $d_A$ for $1\leq A\leq l$, and $ t_{l+1} = y_{l+1}$.
Thus, recalling that $d_{l+1}=0$, $\frac{\partial y_\alpha}{\partial t_A}$ is quasi-homogeneous of degree $d_\alpha - d_A$, 
and likewise $\frac{\partial t_B}{\partial y_\beta}$, has quasi-homogeneous degree $d_B - d_\beta$.
Furthermore, by \cite{MR1606165}, $(c_{\rm AW})^{LM}_N(t)$ is quasi-homogeneous of degree $d_L + d_M -d_N$. Therefore, each nonzero term of
\[(\ell_{\rm AW})^{\alpha\beta}_{\epsilon} = \sum_{L,M,N}\frac{\partial y_{\alpha}}{\partial t_L}\frac{\partial y_{\beta}}{\partial t_M}\frac{\partial t_N}{\partial y_{\epsilon}}c^{LM}_N\]
will be of the same degree: 
 \[(d_{\alpha}-d_{L})+ (d_{\beta}-d_M) +(d_N-d_{\epsilon}) + (d_L + d_M -d_N) = d_{\alpha} + d_{\beta} -d_{\epsilon}\,,\]
concluding the proof.
\end{proof}

Define a grading on $\bbC[Y_1, \dots , Y_l]$ by $\deg_Y Y_i = d_i$ for $1\leq i\leq l$. Writing
   \beq
   D \coloneqq \mathrm{max}\big\{(d_{\eta}+d_{\delta}-d_{\epsilon})|1\leq \eta,\delta,\epsilon \leq l+1\big\}
   \,,
   \label{eq:D}\eeq 
we will write \[
\mathfrak{V}_{\mathrm{adm}} \coloneqq\l\{F \in  \bbC[Y_1, \dots , Y_l]\, \Big|\, \deg_Y F \leq D\r\}\,, \quad 
S_{\mathrm{adm}} := \bigg\{(n_1,\dots, n_l)\in \mathbb{Z}_{\geqslant 0}^{l} \bigg| \sum_{i=1}^{l} n_i d_i \leq D\bigg\},\]
for, respectively, the finite-dimensional vector subspace of polynomials in $(Y_1, \dots, Y_l)$ of degree less than or equal to $D$, and the set of monic monomials in $\mathfrak{V}_{\mathrm{adm}}$.
We will refer to $S_{\mathrm{adm}}$ as the set of {\it admissible exponents} of the root system $\RR$, and to the corresponding monomial basis of $\mathfrak{V}_{\mathrm{adm}}$ as the set of {\it admissible monomials}. In particular, for $N=(n_1, \dots, n_l) \in S_{\rm adm}$, we will use the shorthand multi-index notation
\[
Y^N \coloneqq \prod_{i=1}^l Y_i^{n_i}\,.
\]

\begin{cor}
We have
\[
\l(\ell_{\rm GW}\r)^{\eta\delta}_\epsilon\big|_{x_{l+1}=0} \in \mathfrak{V}_{\mathrm{adm}}\,, \quad \l(\ell_{\rm AW}\r)^{\eta\delta}_\epsilon\big|_{x_{l+1}=0}\in \mathfrak{V}_{\mathrm{adm}} \,.\]
\label{cor:boundinY}
\end{cor}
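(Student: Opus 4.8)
The plan is to deduce the statement directly from the quasi-homogeneity already established in \cref{prop:lqhom}, combined with the explicit description of the grading on $\cA$ provided by \cref{prop:Aring}. By \cref{prop:lqhom}, both $\l(\ell_{\rm GW}\r)^{\eta\delta}_\epsilon$ and $\l(\ell_{\rm AW}\r)^{\eta\delta}_\epsilon$ are quasi-homogeneous \emph{polynomials} in $y_1,\dots,y_l$ and $\re^{y_{l+1}}$ of degree $d_\eta + d_\delta - d_\epsilon$. The remaining task is therefore a monomial degree count under the specialisation $x_{l+1}=0$, which I would carry out uniformly for both tensors.

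First I would recall from \eqref{eq:yi} that $y_\alpha = \re^{d_\alpha x_{l+1}} Y_\alpha$ for $1\leq \alpha \leq l$ and $y_{l+1}=x_{l+1}$, with grading $\deg y_\alpha = d_\alpha$ and $\deg \re^{y_{l+1}} = 1$. A typical monomial appearing in the quasi-homogeneous expansion has the form $\prod_{\alpha=1}^l y_\alpha^{n_\alpha}\,(\re^{y_{l+1}})^{m}$ with $n_\alpha, m \in \bbZ_{\geq 0}$, and quasi-homogeneity forces
\[
\sum_{\alpha=1}^l n_\alpha d_\alpha + m = d_\eta + d_\delta - d_\epsilon\,.
\]
Setting $x_{l+1}=0$ sends $y_\alpha \mapsto Y_\alpha$ and $\re^{y_{l+1}} \mapsto 1$, so this monomial specialises to $\prod_{\alpha} Y_\alpha^{n_\alpha}$, whose $Y$-degree is
\[
\sum_{\alpha=1}^l n_\alpha d_\alpha = \l(d_\eta + d_\delta - d_\epsilon\r) - m \leq d_\eta + d_\delta - d_\epsilon\,,
\]
the inequality being precisely the point where the nonnegativity $m\geq 0$ is used.

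Finally, since $d_\eta + d_\delta - d_\epsilon \leq D$ by the very definition \eqref{eq:D} of $D$, every monomial surviving the specialisation has $Y$-degree at most $D$, so both $\l(\ell_{\rm GW}\r)^{\eta\delta}_\epsilon\big|_{x_{l+1}=0}$ and $\l(\ell_{\rm AW}\r)^{\eta\delta}_\epsilon\big|_{x_{l+1}=0}$ lie in $\mathfrak{V}_{\mathrm{adm}}$. I do not expect a genuine obstacle here: the substantive work is already contained in \cref{prop:fourpol,prop:lGWA,prop:lqhom}, which guarantee polynomiality, membership in $\cA$, and quasi-homogeneity. The only point requiring care is the nonnegativity of the exponent $m$ of $\re^{y_{l+1}}$, which is exactly the \emph{polynomial} (as opposed to Laurent) dependence on $\re^{y_{l+1}}$ secured by \cref{prop:lGWA}; without it the degree could a priori exceed $D$.
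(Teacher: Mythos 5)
Your argument is correct and is precisely the unpacking of the paper's own (one-line) proof, which deduces the corollary immediately from \cref{prop:lqhom}: quasi-homogeneity of degree $d_\eta+d_\delta-d_\epsilon\leq D$ plus polynomial (non-Laurent) dependence on $\re^{y_{l+1}}$ forces every monomial surviving the specialisation $x_{l+1}=0$ to have $Y$-degree at most $D$. Your remark that the nonnegativity of the exponent of $\re^{y_{l+1}}$ is the one point genuinely needed is exactly right, and it is indeed what \cref{prop:fourpol,prop:lGWA,prop:lqhom} supply.
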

\begin{proof}
\vspace{-.5cm}
Immediate from \cref{prop:lqhom}.
\end{proof}

By \cref{cor:boundinY}, the Weyl-invariant Fourier polynomials $(\ell_{\rm GW})^{\eta\delta}_\epsilon|_{x_{l+1}=0}$ and $(\ell_{\rm AW})^{\eta\delta}_\epsilon|_{x_{l+1}=0}$ are elements of the same finite-dimensional vector space $\mathfrak{V}_{\mathrm{adm}}$, with $\dim_\bbC \mathfrak{V}_{\mathrm{adm}} = |S_{\rm adm}|$. This reduces the 
verification of the functional relation \eqref{eq:AD2} to checking the \emph{numerical} relation \[(\ell_{\rm GW})^{\eta\delta}_\epsilon(x^{(K)})=(\ell_{\rm AW})^{\eta\delta}_\epsilon(x^{(K)})\] for {\it finitely many points} $\{x^{(1)}, \dots,  x^{(|S_{\rm adm}|)} \}$ in $\mathfrak{h}$, each
providing a linear constraint on the coefficients of either side of \eqref{eq:AD2} as an element of $\mathfrak{V}_{\rm adm}$. If these points are chosen generically, the resulting linear system will have maximal rank and determine  the $(2,1)$ tensors $\ell_{\rm GW}$ and $\ell_{\rm AW}$ uniquely.

\begin{defn}
    A set of $|S_{\rm adm}|$ points $\{x^{(1)}, \dots, x^{(|S_{\rm adm}|)}\}\subset M^{\rm ss}_{\rm AW}$ will be called a \emph{set of initial conditions} for $\cM_{\rm AW}^\flat$ if the
$|S_{\rm adm}|\times |S_{\rm adm}|$
generalised Vandermonde matrix minor \[(Y^{N}(x^{(M)}))_{N,M\in S_{\rm adm}}\] is non-singular,
\beq
\det_{M,N} \Big(Y^{N}(x^{(M)})\Big) \neq 0\,. 
\label{eq:vdm}
\eeq
\end{defn}

Checking \eqref{eq:AD2} on a set of initial conditions is poorly suited to a general proof of \cref{thm:ddQH} for all marked pairs $(\RR, \widehat{\omega})$. On the other hand, it can be performed highly effectively on a case-by-case basis. As it just remains to prove  \cref{thm:msQH,thm:ddQH} for $\RR=\mathrm{E}_l$, we will construct sets of initial conditions to verify \eqref{eq:AD2} directly in these three exceptional cases.

\subsubsection{Proof of \cref{thm:ddQH} for the $\mathrm{E}_l$ series}

It is straightforward to compute the degree bound $D$ and the number $|S_{\rm adm}|$ of admissible monomials for the exceptional series $\mathrm{E}_l$. These are reproduced in \cref{tab:DSadm}.

\begin{table}[h]
\centering
\begin{tabular}{|c|c|c|}
\hline
$l$ & $D$ & $|S_{\rm adm}|$\\
\hline
6 & 12 & 151 \\
\hline
7 & 24 & 254 \\
\hline
8 & 60 & 434\\
\hline
\end{tabular}
\medskip

\caption{Degree bounds and size of the set of initial conditions for the exceptional series $\mathrm{E}_l$, $l=6,7,8$.}
\label{tab:DSadm}
\end{table}

Configurations of $|S_{\rm adm}|$ points that are not initial have measure zero in $\mathrm{Sym}^{|S_{\rm adm}|}\mathfrak{h}$,
since \eqref{eq:vdm} is an open condition. 
Due to the small size of $S_{\rm adm}$ for $\RR =\mathrm{E}_l$, it is straightforward to construct an initial set by picking a configuration of $|S_{\rm adm}|$ points in the Cartan subalgebra and then checking {\it a posteriori} that the generalised Vandermonde minor \eqref{eq:vdm} is indeed non-zero for them. Having constructed a set of initial conditions $\mathfrak{I}$, we are then just left with verifying directly the numerical identities
\[
(\ell_{\rm GW})^{\alpha\beta}_\gamma(x^{(K)}) = 
(\ell_{\rm AW})^{\alpha\beta}_\gamma(x^{(K)})\,, \quad k=1, \dots, |\mathfrak{I}|\,.
\] 
\begin{prop}
For all $(\RR, \widehat{\omega})$ with $\RR=\mathrm{E}_l$, there exists a set of initial conditions $\mathfrak{I}$ such that, $\forall x \in \mathfrak{I}$,
\beq
(\ell_{\rm GW})^{\alpha\beta}_\gamma(x) = 
(\ell_{\rm AW})^{\alpha\beta}_\gamma(x)\,.
\label{eq:ddinit}
\eeq
\label{prop:ddinit}
\end{prop}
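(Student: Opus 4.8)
The plan is to prove \eqref{eq:ddinit} by an explicit, exact finite computation, leveraging the structural reduction assembled in the preceding results. By \cref{prop:lqhom} and \cref{cor:boundinY}, both $(\ell_{\rm GW})^{\alpha\beta}_\epsilon$ and $(\ell_{\rm AW})^{\alpha\beta}_\epsilon$, restricted to $x_{l+1}=0$, are quasi-homogeneous Weyl-invariant Fourier polynomials of the \emph{same} degree $d_\alpha+d_\beta-d_\epsilon$ lying in the single finite-dimensional space $\mathfrak{V}_{\mathrm{adm}}$; since a quasi-homogeneous polynomial in $y_1,\dots,y_l,\re^{y_{l+1}}$ is recovered from its restriction to $\re^{y_{l+1}}=1$, and $x_{l+1}=0$ corresponds precisely to $\re^{y_{l+1}}=1$ and $y_\alpha=Y_\alpha$, it suffices to establish the equality at $x_{l+1}=0$. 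Evaluation at a set of initial conditions $\mathfrak{I}$ is, by the non-vanishing \eqref{eq:vdm} of the generalised Vandermonde minor in the admissible monomial basis $\{Y^N\}_{N\in S_{\mathrm{adm}}}$, a linear isomorphism of $\mathfrak{V}_{\mathrm{adm}}$ onto $\bbC^{|S_{\mathrm{adm}}|}$. Hence verifying \eqref{eq:ddinit} on $\mathfrak{I}$ is exactly what is needed to upgrade it to the functional identity \eqref{eq:AD2}, and thence to \cref{thm:ddQH} for $\RR=\mathrm{E}_l$.

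First I would construct $\mathfrak{I}$ explicitly for each of $\mathrm{E}_6,\mathrm{E}_7,\mathrm{E}_8$. Since the non-initial configurations cut out a proper analytic subset (the Vandermonde determinant being not identically zero), a generic choice of $|S_{\mathrm{adm}}|$ points in $\mathfrak{h}^{\mathrm{reg}}$ is initial, and the sizes $|S_{\mathrm{adm}}|=151,254,434$ of \cref{tab:DSadm} are small enough to be tractable. Crucially, I would pick the points so that the $\re^{x_i}$ are distinct rational numbers bounded away from the discriminant $\delta=0$. With this choice every ingredient of \eqref{eq:lhsrhs} evaluates to a rational number: the basic invariants $Y_\alpha=S_\cW(\re^{\langle\omega_\alpha,h\rangle})$ and the Jacobian entries $\partial y_\alpha/\partial x_i$ are Laurent polynomials in the $\re^{x_i}$; the flat coordinates $t_A$, their inverses $y_\alpha(t)$, and the structure constants $(c_{\rm AW})^{LM}_N$ are polynomials in $y_1,\dots,y_l,\re^{y_{l+1}}$ by \cite{MR1606165,Brini:2017gfi,Brini:2021pix}; and the triple derivatives of $F_{\rm GW}$ are the rational functions \eqref{eq:cBG}, \eqref{eq:3derGWijk}. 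One then confirms \eqref{eq:vdm} for the chosen configuration by an exact rational determinant computation, so that $\mathfrak{I}$ is indeed a set of initial conditions.

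Next I would evaluate both tensors at each $x\in\mathfrak{I}$. For $\ell_{\rm GW}$ this means assembling \eqref{eq:lhsrhs} from the constant inverse intersection form $\eta^\flat_{\rm AW}$, the rational triple derivatives of $F_{\rm GW}$, and the inverse Jacobian $\partial x_k/\partial y_\epsilon$, the latter obtained as the exact inverse of the rational matrix $\cJ=(\partial y_\alpha/\partial x_i)$ evaluated at the point. The whole point of passing to initial conditions is precisely that this inversion is carried out \emph{at a point} rather than symbolically: the fully symbolic inverse Jacobian is intractable (with up to billions of terms for $\mathrm{E}_8$), whereas inverting a single rational matrix is immediate. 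For $\ell_{\rm AW}$ one evaluates the known polynomial data of $\cM_{\rm AW}$ at the point and contracts as in \eqref{eq:lhsrhs}. Comparing the two resulting rational numbers for every index triple $(\alpha,\beta,\epsilon)$ and every $x\in\mathfrak{I}$ establishes \eqref{eq:ddinit}; note that since both sides are computed independently from their own definitions, the observed agreement is a genuine verification with no circularity.

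The main obstacle is not conceptual but lies in making the verification rigorous and in controlling its scale. Rigour is secured by the rational choice of exponentials: all arithmetic takes place over $\bbQ$, so the comparison is an exact equality of rational numbers rather than a floating-point coincidence, and the finiteness of $\mathfrak{I}$ and of the index range renders the whole check a finite exact computation. The practical difficulty is concentrated in type $\mathrm{E}_8$, where one must evaluate the orbit sums $Y_\alpha$ over large Weyl orbits, invert $8\times 8$ rational Jacobians at each of the $434$ points, and handle the comparatively bulky explicit prepotential $F_{\rm AW}$; these steps are heavy but entirely mechanical, and I would delegate them to a computer-algebra system operating in exact arithmetic.
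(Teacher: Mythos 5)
Your proposal is correct and follows essentially the same route as the paper, whose proof is a direct calculation carried out exactly as you describe: choose a rational configuration of $|S_{\rm adm}|$ points, verify the generalised Vandermonde condition \eqref{eq:vdm}, and evaluate both tensors in \eqref{eq:lhsrhs} exactly over $\bbQ$ at each point, inverting the Jacobian numerically rather than symbolically. The only cosmetic difference is that you spell out the reduction to $x_{l+1}=0$ and the exact-arithmetic safeguards explicitly, which the paper leaves to the surrounding discussion and the ancillary \emph{Mathematica} code.
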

\begin{proof} 
\vspace{-.75cm}
Direct calculation.
\end{proof}

\begin{example}[$\RR=\mathrm{E}_6$] We will sketch here the main elements entering the verification of \eqref{eq:ddinit} for $\RR=\mathrm{E}_6$. The l.h.s. of \eqref{eq:AD2}, as a function of $x=(x_1, \dots, x_{7})$ is explicitly computed by \eqref{GW_potential}. As for the r.h.s., $(\eta^\flat_{\rm AW})_{ia}$ is given by  \eqref{eq:inter_form}, and all we need to compute are the Jacobian matrix of the change-of-variables $t \rightarrow t(x)$, its inverse, and the $(2,1)$ multiplication tensor $c_{\rm AW}$ in flat coordinates. For the latter, the prepotential of $\cM_{\rm AW}$ was computed in 
\cite{Brini:2021pix} to be
\begin{align*}
& 
F_{\rm AW}  =\frac{\re^{12 t_7}}{24}+
\frac{1}{4} \re^{8 t_7} t_1 t_5+\frac{1}{12} \re^{6 t_7} \left(t_1^3+t_5^3+3 t_6^2\right)+\frac{1}{2} \re^{5 t_7} t_1 t_6 t_5 
\nn \\ &
+\frac{1}{72} \re^{4 t_7} \left(10 t_1^2 t_5^2+t_1^2 t_4 -t_2 t_5^2-t_2t_4\right)+\frac{\re^{3 t_7}}{12}  t_6 \left(t_1^3-t_2 t_1+t_5^3+2 t_6^2+t_4
   t_5\right)\nn \\ & +\frac{\re^{2 t_7}}{144}  \left(t_5 t_1^4-2 t_2 t_5 t_1^2+\left(t_5^4+2 t_4 t_5^2+36 t_6^2 t_5+t_4^2\right) t_1+t_2^2
   t_5\right) +\frac{1}{72} \re^{t_7} \left(t_1^2-t_2\right) \left(t_5^2+t_4\right) t_6 \nn \\ 
   & -\frac{t_1^6+t_5^6}{38880}-\frac{t_2 t_1^4-t_4 t_5^4+t_2^3-t_4^3}{7776}-\frac{t_2^2 t_1^2+t_4^2 t_5^2}{2592}-\frac{(t_2 t_1-t_4 t_5)t_3 }{36} -\frac{t_6^4}{192} +\frac{1}{8} t_3 t_6^2+\frac{1}{4} t_3^2 t_7\,,
\end{align*}
from which 
$(c_{\rm AW})^{AB}_C$ in \eqref{eq:cflat}
is immediately computed as an explicit polynomial in $t_1, \dots, t_6$ and $\re^{t_7}$. The flat coordinates are related to the fundamental traces in \eqref{eq:Wi} as \cite{Brini:2021pix}
\[
      t_1 = W_0^{\frac{1}{3}}W_1\,, \quad  t_2 = W_0^{\frac{2}{3}}(W_1^2 - 6W_2  - 12W_5)\,, \quad  t_3 = W_0(2W_1W_5 + W_3 + 3W_6 + 3)\,,  \]
\beq
    t_4 = W_0^{\frac{2}{3}}(-W_5^2  + 12W_1 + 6W_4)\,, \quad  t_5 = W_0^{\frac{1}{3}}W_5\,, \quad  t_6 = W_0^{\frac{1}{2}}(W_6 + 2)\,, \quad t_7 = \frac{\log(W_0)}{6}\,, 
\label{eq:ttow}
\eeq
where furthermore $W_0 = \re^{1/2 x_{7}}$. The fundamental traces $W_1$ and $W_6$ in, respectively, the {\bf 27} and {\bf 78} (adjoint) representation can be computed from the respective weight and root system, and their expressions are given in the 
ancillary online material.
%
%
The remaining traces can be computed from the following relations in the representation ring of $\mathrm{E}_6$:
\[
W_i(x)=W_{6-i}(-x)\,, \quad i=1, \dots, 5\]
\[
W_2(x)=\frac{1}{2}\l(W_1^2(x)-W_1(2x)\r)\,, \quad W_3(x)= \frac{1}{3}\l(W_2(x) W_1(x)-W_1(x) W_1(2x)+W_1(3x)\r)
\]
expressing, respectively, the fact that $\rho_i= \overline{\rho}_{6-i}$ for $1 \leq i \leq 5$ and $\rho_i= \wedge^i \rho_1$ for $i=2,3$. 
Plugging the resulting expressions into \eqref{eq:ttow} gives the change-of-variables $t \to t(x)$, from which the Jacobian coefficients $\de_{x_i} t_A$ could then be in principle be computed as explicit, if cumbersome, Fourier polynomials with a few thousand terms. The resulting matrix inversion computing $\de_{t_A} x_i$ as rational trigonometric functions of $x$ is far out of reach of modern symbolic computation packages. On the other hand, evaluating on a (rational) set of initial conditions $\mathfrak{I}$ dramatically reduces the unwieldy expressions above in the field $\bbQ(\re^{x_1}, \dots, \re^{x_7})$ to eminently manageable manipulations of rational numbers. The exact inversion of the \emph{numerical} $7\times 7$ matrix $\de_{t_A} x_i|_{x=x^{(K)}}$ over $\bbQ$ for $x^{(K)} \in\mathfrak{I}$ takes now a fraction of a  second in {\it Mathematica} on an entry-level desktop computer\footnote{Absolute clock-times based on a setup with Intel Core i7-8700 @3.20 GHz processor and 16GB RAM.}, as does the evaluation of all the other quantities entering \eqref{eq:AD2}. This allows for a straightforward and fast verification of \eqref{eq:AD2} over $\mathfrak{I}$, and therefore, by \cref{prop:lqhom,cor:boundinY}, on the whole of $M_{\rm AW}$. 
\end{example}

The same \emph{Mathematica} calculations take a couple of minutes for $\RR=\mathrm{E}_7$, and a few hours for $\RR=\mathrm{E}_8$ with the same setup. The Wolfram Language code used to verify \cref{prop:ddinit} is available to the reader in the ancillary online material.


\begin{cor}
\cref{thm:ddQH,thm:msQH} hold for $(\RR, \widehat{\omega})$ with $\RR=\mathrm{E}_l$.
\end{cor}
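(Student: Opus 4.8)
The plan is to deduce \cref{thm:ddQH} for $\RR=\mathrm{E}_l$ from \cref{prop:ddinit} by means of the reduction-to-initial-conditions argument already assembled above, and then to obtain \cref{thm:msQH} formally from the isomorphism $\cM_{\rm LG}\simeq\cM_{\rm AW}$ of \cref{thm:mirrorEAW}. The key observation is that \cref{thm:ddQH}, i.e. $\cM^\flat_{\rm AW}\simeq\cM_{\rm GW}$, is equivalent to the functional identity \eqref{eq:AD2}, equivalently to the equality of $(2,1)$-tensors $\ell_{\rm GW}=\ell_{\rm AW}$ in the $(y_1,\dots,y_{l+1})$-chart, and that this equality is pinned down by finitely many linear constraints.

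First I would reduce the full tensor identity to the single slice $x_{l+1}=0$. By \cref{prop:lqhom}, each component $(\ell_{\rm GW})^{\alpha\beta}_\gamma$ and $(\ell_{\rm AW})^{\alpha\beta}_\gamma$ is a quasi-homogeneous polynomial in $y_1,\dots,y_l,\re^{y_{l+1}}$ of the \emph{single} degree $d_\alpha+d_\beta-d_\gamma$. Since $\deg\re^{y_{l+1}}=1$ and $\deg y_\alpha=d_\alpha$, any monomial $\prod_\alpha y_\alpha^{n_\alpha}(\re^{y_{l+1}})^m$ occurring has its exponent $m=(d_\alpha+d_\beta-d_\gamma)-\sum_\alpha n_\alpha d_\alpha$ uniquely determined by the $n_\alpha$. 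Restriction to $x_{l+1}=y_{l+1}=0$ sends $\re^{y_{l+1}}\mapsto 1$ and $y_\alpha\mapsto Y_\alpha$, so it is injective on this fixed-degree space: the restricted polynomials in $(Y_1,\dots,Y_l)$ recover the full components. By \cref{cor:boundinY}, both restrictions lie in the finite-dimensional space $\mathfrak{V}_{\mathrm{adm}}$ of dimension $|S_{\rm adm}|$.

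Next I would invoke the generalised Vandermonde mechanism. For a set of initial conditions $\mathfrak{I}=\{x^{(1)},\dots,x^{(|S_{\rm adm}|)}\}$, the non-vanishing \eqref{eq:vdm} of the minor $(Y^N(x^{(M)}))_{N,M\in S_{\rm adm}}$ says precisely that the evaluation map $\mathfrak{V}_{\mathrm{adm}}\to\bbC^{|S_{\rm adm}|}$ at the points of $\mathfrak{I}$ is a linear isomorphism; hence two elements of $\mathfrak{V}_{\mathrm{adm}}$ agreeing on all of $\mathfrak{I}$ coincide identically. Applying this to $(\ell_{\rm GW})^{\alpha\beta}_\gamma|_{x_{l+1}=0}$ and $(\ell_{\rm AW})^{\alpha\beta}_\gamma|_{x_{l+1}=0}$, the numerical equalities furnished by \cref{prop:ddinit} upgrade to the functional identity \eqref{eq:AD2} on all of $M_{\rm AW}$, which is exactly $\cM^\flat_{\rm AW}\simeq\cM_{\rm GW}$, i.e. \cref{thm:ddQH} for $\RR=\mathrm{E}_l$.

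Finally I would deduce \cref{thm:msQH}. Dubrovin duality is natural with respect to isomorphisms of semi-simple conformal charge-$1$ Frobenius manifolds, since the dual structure \eqref{eq:dualmetric}--\eqref{eq:dualprod} is built functorially out of $\eta$, $\circ$ and $E$; both $\cM_{\rm AW}$ and $\cM_{\rm LG}$ are of this type, so $\cM_{\rm LG}\simeq\cM_{\rm AW}$ (\cref{thm:mirrorEAW}) induces $\cM^\flat_{\rm LG}\simeq\cM^\flat_{\rm AW}$, and chaining with \cref{thm:ddQH} gives $\cM^\flat_{\rm LG}\simeq\cM_{\rm GW}$. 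The sole genuine labour sits inside \cref{prop:ddinit}: the main obstacle is that the change of variables $t\to t(x)$ through the fundamental characters $W_i$ is an enormous Fourier polynomial whose symbolic inverse is intractable, so one is forced to choose $\mathfrak{I}$ at \emph{rational} points, where the $(l+1)\times(l+1)$ Jacobian $\partial_{x_i}t_A$ becomes a numerical matrix over $\bbQ$ that can be inverted exactly and cheaply; one must additionally confirm \emph{a posteriori} that the chosen configuration satisfies \eqref{eq:vdm}, which holds for generic choices.
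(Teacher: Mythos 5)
Your proposal is correct and follows essentially the same route as the paper: \cref{thm:ddQH} is deduced from \cref{prop:ddinit} via the quasi-homogeneity of \cref{prop:lqhom}, the containment in $\mathfrak{V}_{\mathrm{adm}}$ from \cref{cor:boundinY}, and the Vandermonde non-degeneracy of the initial conditions, with \cref{thm:msQH} then following from \cref{thm:mirrorEAW}. Your explicit observation that the $\re^{y_{l+1}}$-exponent is pinned down by the $Y$-monomial, so that restriction to $x_{l+1}=0$ loses no information, is a useful elaboration of a step the paper leaves implicit.
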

\begin{proof}
\cref{thm:ddQH} follows from \cref{prop:ddinit,prop:lqhom,cor:boundinY}, and it implies \cref{thm:msQH} by \cref{thm:mirrorEAW}.
\end{proof}

\begin{example}[Landau--Ginzburg mirror symmetry for the $\mathrm{E}_6$ du Val resolution]
Let $\RR=\mathrm{E}_6$, so that $\widehat{\omega}=\omega_3$ is the highest weight of its 2925-dimensional fundamental representation. By \cref{thm:msQH} and \eqref{eq:phiresc}, the $\mathsf{T}$-equivariant quantum cohomology of the du~Val resolution of type $\mathrm{E}_6$ is mirror to a one-dimensional Landau--Ginzburg model with a log-meromorphic superpotential $\log \lambda$ and logarithmic primitive form  
\[\phi^2 = \frac{\nu}{3} \l(\frac{\rd \mu}{ \mu}\r)^2 \,,\]
where $\lambda$ and $\mu$ are the two Cartesian projections on the family of plane algebraic curves over $\mathfrak{h}^{\rm reg} \oplus \bbC$ in \eqref{eq:shiftfun} given by \[\cP(y_1, \dots y_7; \lambda, \mu)=0\,,\] 
and we took $\omega \coloneqq \omega_1$ in \eqref{eq:shiftfun} to be the highest weight of the 27-dimensional fundamental representation $\rho_1$. Expressing the characters of the exterior powers of $\rho_1$ in terms of fundamental characters $W_1, \dots W_6$ (see \cite[Eq.~(6.21)]{Borot:2015fxa}), and further relating the latter to the basic invariants as
\[
W_1 = Y_1\,, \quad W_2 = Y_2 +5 Y_5\,, \quad W_3= Y_3 + 4 Y_1 Y_5 - 9 Y_6-63\,,\]
\[ W_4 = Y_4 + 5 Y_1\,, \quad W_5= Y_5\, \quad W_6= Y_6+6\,,
\]
as can be ascertained from a direct inspection of the fundamental weight systems $\Gamma_i$ ($i=1 \dots 6$), we compute from \eqref{eq:shiftfun}--\eqref{eq:phiresc} that
\[\cP(y_1, \dots, y_7; \lambda, \mu) =  \cQ(Y_1, \dots, Y_6, \mu) + \re^{-x_7/(2 \nu)} \cQ^{[1]}(Y_1, \dots, Y_6, \mu) \lambda+ \re^{-x_7/\nu}\mu ^9 \left(\mu ^3-1\right)^3 \lambda^2\bigg|_{Y_\alpha=y_\alpha \re^{-d_\alpha y_7}}
\]
where $\cQ$ is the characteristic polynomial in \eqref{eq:weylrel}, and
\begin{align*}
 \cQ^{[1]}  & = 
1-2 \mu ^2 Y_1+\mu ^3 \left(3 Y_6+20\right)+\mu ^4 \left(Y_1^2-2 Y_2-13 Y_5\right)-\mu ^5 \left(Y_6 Y_1+9 Y_1+Y_4\right) \nn \\ 
&+ \mu ^6 \left(2 Y_3+12 Y_1 Y_5-21 Y_6-150\right)
  +\mu ^7
   \left(5 Y_2-10 Y_1^2-Y_4 Y_1+24 Y_5-Y_5 Y_6\right)  \nn \\ & + \mu ^8 \left(Y_5 Y_1^2+3 Y_6 Y_1+26 Y_1-13 Y_5^2+Y_4-2 Y_2 Y_5\right)  
   \nn \\ &
   +\mu ^9 \left(6 Y_5 Y_1-Y_1^3+3 Y_2 Y_1-6 Y_3+3 Y_4 Y_5+48
   Y_6+343\right)     \nn \\ & +\mu ^{10} \left(4 Y_5-4 Y_1^2-2 Y_5^2 Y_1-Y_4 Y_1-3 Y_2+3 Y_5 Y_6\right)
    \nn \\
   & + \Big(\mu^k \longrightarrow (- \mu)^{21-k},\, Y_1 \longleftrightarrow Y_5,\, Y_2 \longleftrightarrow Y_4  \Big)\,.
\end{align*}
As a check, from \eqref{eq:todalogeta}, the dual pairing on $M_{\rm AW} \simeq M_{\rm LG}$ is
\begin{align*}
\eta_{\rm LG}^\flat(\de_{x_i}, \de_{x_j}) & =  \frac{1}{6}\sum_m \Res_{p_m^{\rm cr}} \frac{\delta_{\de_{x_i}} \lambda\, \delta_{\de_{x_j}} \lambda}{\lambda \mu\, \partial_\mu \lambda } \frac{\rd \mu}{\mu} = -\frac{1}{6}\sum_{\lambda(p), \mu(p) \in \{0,\infty\}} \Res_{p} \frac{\delta_{\de_{x_i}}\lambda\, \delta_{\de_{x_j}}\lambda}{\lambda \mu\, \partial_\mu \lambda } \frac{\rd \mu}{\mu}\,.
\end{align*}
For $1\leq i,j \leq l$, it is straightforward to check that the only non-vanishing residues arise from the zeroes of $\lambda$, i.e. when $\mu=\re^{\bra \omega', x \ket}$ for $\omega' \in \Gamma_1$. We find
\[
\sum_{\omega' \in \Gamma_1} \Res_{\mu= \re^{\bra \omega', x \ket}} \frac{\delta_{\de_{x_i}}\lambda\, \delta_{\de_{x_j}}\lambda}{\lambda \mu\, \partial_\mu \lambda } \frac{\rd \mu}{\mu} = 
\sum_{\omega' \in \Gamma_1} \bra \omega', \alpha_i \ket \bra \omega', \alpha_j \ket = \left(
\begin{array}{cccccc}
 12 & -6 & 0 & 0 & 0 & 0 \\
 -6 & 12 & -6 & 0 & 0 & 0 \\
 0 & -6 & 12 & -6 & 0 & -6 \\
 0 & 0 & -6 & 12 & -6 & 0 \\
 0 & 0 & 0 & -6 & 12 & 0 \\
 0 & 0 & -6 & 0 & 0 & 12 \\
\end{array}
\right)_{ij},
\]
and therefore
\[
\eta_{\rm LG}^\flat(\de_{x_i}, \de_{x_j}) = \eta_{\rm GW}(\varphi_i, \varphi_j) = -C_{ij}\,, 
\]
recovering the expression for the 2-point intersection pairing on $Z$ in \eqref{eq:cBG}.
\end{example}

 \bibliographystyle{plain}
\bibliography{EAWduValMirror}
\end{document}